\documentclass[runningheads]{llncs}
\usepackage[utf8]{inputenc}
\usepackage[T1]{fontenc}
\usepackage{lmodern}            

\usepackage{amsmath}
\usepackage{amssymb}
\usepackage{amsthm}
\usepackage{mathtools}

\usepackage{graphicx}
\usepackage{xcolor}

\usepackage{booktabs}

\usepackage[ruled,linesnumbered]{algorithm2e}

\usepackage{float}

\usepackage{microtype}

\usepackage{url}
\usepackage{hyperref}

\usepackage{amsmath,amsfonts,bm}

\def\eqref#1{equation~\ref{#1}}

\def\1{\bm{1}}

\DeclareMathAlphabet{\mathsfit}{\encodingdefault}{\sfdefault}{m}{sl}
\SetMathAlphabet{\mathsfit}{bold}{\encodingdefault}{\sfdefault}{bx}{n}

\newtheoremstyle{dotstyle}
{\topsep}      
{\topsep}      
{\itshape}     
{}             
{\bfseries}    
{.}            
{0.5em}        
{}             

\newtheoremstyle{dotstyle_def}
{\topsep}
{\topsep}
{\normalfont}
{}
{\bfseries}
{.}
{0.5em}
{}

\theoremstyle{plain}

\theoremstyle{definition}

\theoremstyle{dotstyle}
\newtheorem{assumption}{Assumption}

\newcommand{\norm}[1]{\left\lVert #1 \right\rVert}
\newcommand{\inprod}[2]{\left\langle #1, #2 \right\rangle}

\providecommand{\Proj}{\mathrm{Proj}}

\SetKwInput{Input}{Input}
\SetKwInput{Output}{Output}

\renewcommand{\eqref}[1]{\hyperref[#1]{(\ref*{#1})}}

\begin{document}
\title{First-Order Optimization Methods with Generalized Smoothness for Low-Rank Matrix Factorization}
%
%

\author{A.\;V.\;Bidanets \inst{1}\orcidID{0009-0002-1895-1413} \and
 F.\;S.\;Stonyakin \inst{1}\orcidID{0000-0002-9250-4438}}
\authorrunning{A.V. Bidanets, F. S. Stonyakin}
%
\institute{
Moscow Institute of Physics and Technology, Russia}
\maketitle              
\begin{abstract}
	This paper studies several first-order methods for low-rank matrix minimization in the Burer--Monteiro form, where the target matrix is represented as the product of a nonnegative factor matrix and its transpose.
	Such a factorization destroys convexity of the objective function and global Lipschitz continuity of the gradient. 
	We prove that for a quadratic objective function that is strongly convex only on the
	subspace of symmetric matrices, the Burer--Monteiro factorized objective
	possesses generalized smoothness with a fractional exponent 2/3.
	On this basis, three methods are proposed: an adaptive gradient descent method with Armijo backtracking; an unconstrained SafeAdam method combining the Adam direction with a descent-angle certificate and a fallback to the normalized gradient; and a certified projected version for the nonnegativity constraint. 
	Global convergence and explicit convergence rates for the proposed methods have been established.
	The obtained convergence rate is better than the convergence rate of ordinary unaccelerated gradient descent in the case of a sufficiently large gradient.
	In the small-gradient regime, the convergence guarantee is of square-root type, and the corresponding complexity scales as the inverse square of the desired accuracy.
	In the large-gradient regime, the convergence guarantee is of three-quarters type, and the corresponding complexity is proportional to the inverse of the accuracy raised to the power of 4/3.
	Experiments on SymNMF using the TDT2, Reuters and CBCL datasets, which are standard benchmarks for document and image clustering, show that the methods match or outperform Dyn-NoLips without tuning the relative-smoothness kernel parameters.
	\keywords{generalized smoothness \and Burer--Monteiro parameterization \and low-rank matrix factorization \and relative smoothness \and adaptive method}
\end{abstract}

\newpage
\tableofcontents
\newpage

\section*{Acknowledgments}
The research is supported by the Ministry of Science and Higher Education of the Russian Federation, project No. FSMG-2024-0011

\section{Preliminaries and Problem Statement}

\subsection{Notation}
For a square matrix $M$ we denote its trace by $\mathrm{Tr}\, M = \sum_{i=1}^n M_{ii}$.
For two matrices $X$ and $Y$ of the same dimension we denote the standard Euclidean inner product and norm by $\langle X, Y \rangle = \mathrm{Tr}(X^{\top} Y)$ and $\|X\| = \sqrt{\mathrm{Tr}(X^{\top} X)}$ --- the Frobenius norm.

For a linear operator $\mathcal{T}: \mathbb{R}^{n\times r}\to\mathbb{R}^{n\times r}$ 
(in particular, for the Hessian $\nabla^2 f(X)$) we use the same notation 
$\|\mathcal{T}\|$ for the operator norm induced by the Frobenius norm:
$\|\mathcal{T}\| = \sup_{\|U\|=1} \|\mathcal{T}U\|$.

For a function $f: \mathbb{R}^{n \times r} \to \mathbb{R}$ we denote by $\nabla f(X)$ its gradient matrix $(\nabla f(X))_{ij} = \frac{\partial f(X)}{\partial x_{ij}}$ and by $\nabla^2 f(X)[U, V]$ the second derivative at the point $X$ along the directions $U, V \in \mathbb{R}^{n \times r}$.

$I_r$ denotes the identity matrix of dimension $r \times r$.

For two matrices $X, Y$ we say that $X \preceq Y$ if the matrix $Y - X$ is positive semidefinite.

For a matrix $X$ we denote by $\mathrm{Proj}_{\ge 0}(X)$ the entrywise projection onto the nonnegative orthant as follows:
\[
(\mathrm{Proj}_{\ge 0}(X))_{ij} = \max\{X_{ij},\,0\},\qquad i=1,\dots,n,\;j=1,\dots,r.
\]

For a matrix $Z=(Z_{ij})$ we define the entrywise infinity norm by
\[
\|Z\|_{\infty} := \max_{i,j}|Z_{ij}|.
\]

\subsection{Assumptions}

\begin{assumption}[Strong convexity of $F$ on symmetric matrices]
	\label{asm:quadratic}
	The function \(F \colon \mathbb{R}^{n \times n} \to \mathbb{R}\) is quadratic:
	\begin{equation}\label{eq_quadratic_f}
		F(Y) = \frac{1}{2} \langle \mathcal{A} Y, Y \rangle + \langle B, Y \rangle,
	\end{equation}
	where
	\begin{itemize}
		\item \(\mathcal{A} \colon \mathbb{R}^{n \times n} \to \mathbb{R}^{n \times n}\) is a linear self-adjoint operator with respect to the Frobenius inner product
		\[
		\langle X, Y \rangle = \mathrm{Tr}(X^\top Y);
		\]
		\item \(\mathcal{A}\) maps symmetric matrices into symmetric ones: 
		\(\mathcal{A}(Y) \in \mathbb{S}^{n \times n}\) for all \(Y \in \mathbb{S}^{n \times n}\);
		\item \(B \in \mathbb{S}^{n \times n}\) is a symmetric matrix.
	\end{itemize}
	It is assumed that the operator \(\mathcal{A}\) is \(\mu\)-positive definite \textbf{only on the subspace of symmetric matrices} \(\mathbb{S}^{n \times n}\), that is,
	\[
	\langle \mathcal{A} Y, Y \rangle \ge \mu \|Y\|_F^2 
	\qquad \forall \, Y \in \mathbb{S}^{n \times n}.
	\]
	This is equivalent to the \(\mu\)-strong convexity of the function \(F\) on \(\mathbb{S}^{n \times n}\). 
	The positive definiteness does not extend to the whole space \(\mathbb{R}^{n \times n}\), which constitutes the key relaxation compared with the work \cite{Dragomir21}.
\end{assumption}

\begin{remark}
	The symmetry of the matrix $B$ and the property of the operator $\mathcal{A}$ to preserve symmetry are natural for a wide class of low-rank optimization problems, where the target matrix $Y$ is symmetric by its nature:
	symmetric matrix completion,
	the phase retrieval problem,
	estimation of covariance matrices,
	Euclidean distance matrix completion,
	nuclear norm and spectral regularization.
\end{remark}

\begin{assumption}[Regularizer and well-posedness]
	\label{asm:regularizer}
	The regularizer \(R \colon \mathbb{R}^{n \times r} \to \mathbb{R} \cup \{+\infty\}\) is closed, proper, and convex.
	Moreover, the objective function \(\Psi\) is bounded below on the feasible set:
	\[
	\inf_{X \in \mathbb{R}^{n \times r}_{\ge 0}} \Psi(X) > -\infty.
	\]
\end{assumption}

\begin{remark}[Gradient properties of \(F\) on symmetric matrices]
	\label{rem:F-gradient-properties}
	Under Assumption~\ref{asm:quadratic}, for all \(X,Y\in\mathbb{S}^{n\times n}\),
	\begin{equation}
		\langle \nabla F(X)-\nabla F(Y), X-Y\rangle
		\ge \mu\|X-Y\|_F^2,
	\end{equation}
	and
	\begin{equation}
		\|\nabla F(X)-\nabla F(Y)\|_F
		\le L_F\|X-Y\|_F.
	\end{equation}
	Here \(L_F\) denotes the Lipschitz constant of \(\nabla F\) on the subspace of symmetric matrices \(\mathbb{S}^{n\times n}\). 
	For the quadratic function \eqref{eq_quadratic_f}, one may take
	\[
	L_F = \|\mathcal{A}\|_{\mathbb{S},\mathrm{op}},
	\]
	where the restricted operator norm \(\|\mathcal{A}\|_{\mathbb{S},\mathrm{op}}\) is discussed in Remark~\ref{rem:norm-A}.
\end{remark}

\begin{remark}[On the constant $L_F$ and the symmetric operator norm]
	\label{rem:norm-A}
	For the quadratic function \eqref{eq_quadratic_f}
	the gradient equals
	\[
	\nabla F(Y)=\mathcal{A}Y+B
	\]
	for $Y\in\mathbb{S}^{n\times n}$. Hence, for all $Y,Z\in\mathbb{S}^{n\times n}$,
	\[
	\|\nabla F(Y)-\nabla F(Z)\|_F
	=
	\|\mathcal{A}(Y-Z)\|_F
	\le
	\|\mathcal{A}\|_{\mathbb{S},\mathrm{op}}\|Y-Z\|_F,
	\]
	where we introduce the operator norm of the restriction of $\mathcal{A}$ to the subspace of symmetric matrices:
	\[
	\|\mathcal{A}\|_{\mathbb{S},\mathrm{op}}
	:=
	\sup_{\substack{Y\in\mathbb{S}^{n\times n}\\ \|Y\|_F=1}}
	\|\mathcal{A}Y\|_F.
	\]
	This is the natural norm for the factorized function $f(X)=F(XX^\top)$, because in the Hessian of $f$ the operator $\mathcal{A}$ is applied only to symmetric matrices, namely to $XX^\top$ and to $XU^\top+UX^\top$. Therefore all subsequent estimates can be written with the restricted norm $\|\mathcal{A}\|_{\mathbb{S},\mathrm{op}}$.
	
	In this sentence, \(U\) is the direction matrix along which the Hessian of the function \(f\) at the point \(X\) acts. It has the same size as \(X\):
	\[
	U \in \mathbb{R}^{n \times r}.
	\]
	When the second derivative of \(f(X)=F(XX^\top)\) along the direction \(U\) is computed, the symmetric matrix
	\[
	XU^\top + UX^\top \in \mathbb{S}^{n \times n},
	\]
	appears in the expression, and the operator \(\mathcal{A}\) is applied to it. This is precisely why the norm of \(\mathcal{A}\) is taken on the subspace of symmetric matrices \(\mathbb{S}^{n \times n}\), and not on the whole space \(\mathbb{R}^{n \times n}\).
	
	That is:
	\begin{itemize}
		\item \(X \in \mathbb{R}^{n \times r}\) is the point at which the Hessian is computed;
		\item \(U \in \mathbb{R}^{n \times r}\) is the direction along which the Hessian acts;
		\item \(XU^\top + UX^\top\) is the symmetric matrix arising from differentiating \(XX^\top\).
	\end{itemize}

	The full operator norm on $\mathbb{R}^{n\times n}$ may also be used, since
	\[
	\|\mathcal{A}\|_{\mathbb{S},\mathrm{op}}
	\le
	\sup_{\substack{W\in\mathbb{R}^{n\times n}\\ \|W\|_F=1}}
	\|\mathcal{A} W\|_F,
	\]
	where \(W \in \mathbb{R}^{n \times n}\) is an arbitrary test matrix of size \(n \times n\) used to define the full operator norm.
	
	But it generally gives more conservative constants and may depend on the behavior of $\mathcal{A}$ on the skew-symmetric subspace, which does not affect the function $f(X)=F(XX^\top)$.
\end{remark}

\subsection{Problem Statement}
Following the works~\cite{Dragomir21,Burer05,Candes09,Chen15,Fang12,Mishra11,Recht07,Jain13,Cai10}, consider the low-rank matrix minimization problem. We first state the original matrix problem in a general form and then describe the factorized approximation actually used in the paper.

The general symmetric-matrix problem is
\begin{equation}\label{original_problem}
	\min_{Y \in \mathbb{S}^{n \times n}} F(Y),
\end{equation}
where
$F \colon \mathbb{R}^{n \times n} \to \mathbb{R}$
is a smooth function, convex on the subspace of symmetric matrices
$\mathbb{S}^{n \times n}$.
For a correct definition of the derivatives of the factorized function $f(X)=F(XX^\top)$ it is convenient to assume that $F$ is defined on the whole space $\mathbb{R}^{n \times n}$.

If the target application requires a low-rank positive semidefinite and entrywise nonnegative matrix, the corresponding structured problem is
\begin{equation}\label{original_problem_constrained}
	\min_{Y \in \mathbb{S}^{n \times n}} F(Y)
	\quad\text{subject to}\quad
	Y \succeq 0, \; Y \ge 0, \; \operatorname{rank}(Y) \le r.
\end{equation}
Here $Y \ge 0$ denotes entrywise nonnegativity.

In the present paper we do not solve
\eqref{original_problem} or \eqref{original_problem_constrained}
directly. Instead, following the Burer--Monteiro approach \cite{Dragomir21}, we solve the factorized approximation
\begin{equation}\label{main_func}
	\min_{X \in \mathbb{R}^{n \times r}_{\ge 0}} \Psi \left(X\right),
	\qquad
	\Psi \left(X\right) := F \left(X X^{\top}\right) + R \left(X\right),
\end{equation}
where
$X \in \mathbb{R}^{n \times r}_{\ge 0}$,
$f \left( X \right) := F \left(X X^{\top}\right)$,
$f: \mathbb{R}^{n \times r} \rightarrow \mathbb{R}$;
$r \le n$;
$R \left( X \right)$ is a \emph{simple} convex regularizer, for example:
\begin{itemize}
	\item the $\ell_1$-norm $R\left(X\right) = {\left \lVert X \right \rVert}_{1}$,
	\item the quadratic $\ell_2$-regularization $R\left(X\right) = \frac{\lambda}{2} {\left \lVert X \right \rVert}^{2}$.
\end{itemize}

The change of variables $Y = XX^{\top}$ with $X \ge 0$ restricts the feasible set to
\begin{equation}\label{eq:feasible_set_Y}
	\mathcal{Y}_r^+
	:=
	\left\{
	XX^{\top}
	\; : \;
	X \in \mathbb{R}^{n \times r}_{\ge 0}
	\right\}.
\end{equation}
Every matrix $Y \in \mathcal{Y}_r^+$ satisfies
\begin{equation}\label{eq:Y_properties}
	Y \succeq 0,
	\qquad
	Y \ge 0,
	\qquad
	\operatorname{rank}(Y) \le r.
\end{equation}
Consequently,
\[
\mathcal{Y}_r^+
\subseteq
\left\{
Y \in \mathbb{S}^{n \times n}
\; : \;
Y \succeq 0, \; Y \ge 0, \; \operatorname{rank}(Y) \le r
\right\}.
\]
Thus, unless the original problem is already restricted to the set $\mathcal{Y}_r^+$, problem \eqref{main_func} should be understood as a low-rank factorized approximation/restriction of the original problem, not as an equivalent reformulation of unconstrained minimization over $\mathbb{S}^{n \times n}$.

\begin{remark} In the theoretical part of the paper we consider only the case 
	$R \equiv 0$. 
	Then the projection step $\mathrm{Proj}_{\ge 0}$ is correct. 
	For general $R$ (for example, $\ell_1$ or $\ell_2$) one should replace the projection by 
	the proximal operator; the analysis of such regularizers is beyond the scope of this work.
\end{remark}

\subsection{Motivation}
\paragraph{Benefits of low-rank factorization}
The Burer--Monteiro low-rank factorization, in which a symmetric matrix \(Y\in\mathbb{S}^{n\times n}\) is represented as \(Y=XX^\top\) with \(X\in\mathbb{R}^{n\times r}\) and \(r\ll n\), offers several substantial advantages. First, it reduces the number of optimization variables from \(O(n^2)\) to \(O(nr)\), which is essential for large-scale problems. Second, when the factor \(X\) is constrained to be entrywise nonnegative, the factorization automatically guarantees that \(Y\) is positive semidefinite, entrywise nonnegative, and has rank at most \(r\). This structural enforcement eliminates the need for expensive projections onto the cone of positive semidefinite matrices or explicit rank constraints, which are computationally prohibitive in high dimensions. Third, although the resulting factorized problem \(\min_X f(X)=F(XX^\top)\) is nonconvex, it often possesses favorable properties such as generalized smoothness, enabling the design and analysis of first-order methods with rigorous convergence guarantees. Consequently, low-rank factorization provides a computationally efficient and theoretically tractable framework for problems such as symmetric nonnegative matrix factorization, where the target matrix is naturally symmetric and nonnegative.
\paragraph{Limitations of the bregman approach and adaptability of the proposed method}\label{limitation_Dyn_NoLips}
In the considered experiments \cite{Dragomir21} on the SymNMF problem, the Dyn-NoLips method demonstrates a high convergence rate. This is largely due to the specific structure of the quadratic loss function $F(Y)=\frac12\|M-Y\|_F^2$, for which the linear operator $\mathcal{A}$ is the identity map. Consequently, the global Lipschitz constant $L_F=1$ and the strong convexity parameter $\mu_F=1$ are known exactly, allowing the theoretically justified kernel parameters $\alpha=6L_F$ and $\sigma=2\|\nabla F(0)\|$ to be set without any heuristic search.
In more general low-rank optimization problems, the global spectral constants $L_F$ and $\mu_F$ are often unknown a priori or vary significantly across the domain. This reveals a fundamental limitation of the Bregman gradient approach:
\paragraph{Rigidity of the bregman kernel geometry.}
In the NoLips framework \cite{Dragomir21}, the parameters $\alpha$ and $\sigma$ define the distance kernel $h_N(X) = \frac{\alpha}{4}\|X\|^4 + \frac{\sigma}{2}\|X\|^2$. This kernel generates the Bregman divergence and the underlying non-Euclidean geometry of the space. The relative smoothness condition $f(X) \le f(Y) + \langle \nabla f(Y), X-Y \rangle + L D_h(X,Y)$ must hold globally. Therefore, $\alpha$ and $\sigma$ must be chosen based on global worst-case bounds for $L_F$ and $\|\nabla F(0)\|$. Adaptive adjustment of these parameters would mean changing the metric space at every iteration. This breaks the standard convergence theory of Bregman methods. Consequently, Dyn-NoLips is forced to use conservative global constants. As a result, its step sizes are suboptimal.
In practice, when these global constants are unknown, the parameters
$\alpha$ and $\sigma$ of the Bregman kernel must be selected by an
external tuning procedure, typically a grid search over a finite set
of candidate values, since the convergence theory does not allow them
to be adapted during the iterations.
\paragraph{Adaptability of the generalized smoothness framework.}
In contrast, our approach is based on the generalized $(2/3, L_0, L_{2/3})$-smoothness property in the standard Euclidean geometry. The constants $L_0$ and $L_{2/3}$ are used \emph{exclusively for computing the step size} via the integral $q$-function; they do not define the geometry of the space itself. This decoupling provides a crucial advantage: the constants can be \textbf{estimated and updated adaptively on the fly} (e.g., using local Hessian approximations, current gradient norms, or adaptive backtracking as in Algorithm~\ref{alg:adaptive_gd_backtrack}) without violating the convergence guarantees. 

Thus, while both frameworks theoretically rely on the spectral properties of the problem, the Bregman approach requires global a priori knowledge to fix the geometry, whereas our method can operate with purely local, adaptive estimates of the smoothness constants. Combined with the computational efficiency of standard Euclidean steps (avoiding the inner subproblems of the Gram kernel) and the use of adaptive search directions (SafeAdam-$\ell$), our approach is highly robust on ill-conditioned or ravine-like landscapes where fixed-kernel Bregman methods struggle.

\subsection{Contribution of this paper}
\label{sec:contributions}

The central theoretical contribution of this work is the proof that the
Burer--Monteiro factorized objective $f(X)=F(XX^\top)$, where $F$ is a
quadratic function, satisfies the generalized $(2/3,\,L_0,\,L_{2/3})$-smoothness
condition with explicit constants expressed through the restricted operator
norm $\|\mathcal{A}\|_{\mathbb{S},\mathrm{op}}$, the strong convexity
parameter $\mu$, the rank $r$, and $\|B\|_F$. A key relaxation compared with
the prior work~\cite{Dragomir21} is that the $\mu$-positive definiteness of
$\mathcal{A}$ is required \emph{only on the subspace of symmetric matrices}
$\mathbb{S}^{n\times n}$, not on the entire space $\mathbb{R}^{n\times n}$,
which is natural for problems where the target matrix is symmetric by
construction. Furthermore, the general $\ell$-smooth framework of
Tyurin~\cite{tyurin2025toward} assumes that the smoothness modulus $\ell$ is
locally Lipschitz; in our setting $\ell(g)=L_0+L_{2/3}g^{2/3}$ this condition
fails at $g=0$ whenever $L_0=0$. We show that all the key analytical
tools---the gradient-difference estimate, the integral function-increment
bound, and the convergence analysis---remain valid under the substantially
weaker assumptions of continuity, nonnegativity, monotonicity, and integrability
of $\ell$, thereby extending the applicability of the generalized smoothness
theory to a wider class of problems.

Three first-order algorithms are proposed for the factorized problem. The
first is an adaptive gradient descent method with Armijo backtracking
(Algorithm~\ref{alg:adaptive_gd_backtrack}), which starts from the
theoretically justified step $\eta_t^{\mathrm{theory}}=q(a_t;a_t)/a_t$ and
adaptively enlarges it by a factor $\alpha_t\in[1,5]$ while maintaining a
sufficient-descent guarantee. The second is an unconstrained SafeAdam-$\ell$
method (Algorithm~\ref{alg:safeadam_unconstrained}), which combines the Adam
search direction with a descent-angle certificate and a fallback to the
normalized gradient step, with the step size determined by the $q$-function
of generalized smoothness. The third is a certified projected SafeAdam-$\ell$
(Algorithm~\ref{alg:safeadam_projected}) for the entrywise nonnegativity
constraint $X\ge 0$, whose convergence is established in the gradient-mapping
norm $\|G_t\|_F$ with an explicit $O(T^{-1/2})$ rate.
The analysis of the projected method relies on the boundedness of the
initial level set and of the gradients along the generated trajectory
(Lemma~\ref{lem:bounded_gradients}), and the convergence in the
gradient-mapping norm is shown to be locally equivalent to convergence
in the projected gradient norm.

For both the adaptive GD with backtracking and the unconstrained SafeAdam-$\ell$
method we establish a two-regime convergence analysis. By splitting the
iterations into small-gradient and large-gradient sets via a two-stage
telescoping argument, we obtain the rates
$\min_t\|\nabla f(X^t)\|_F = O(T^{-1/2})$ in the small-gradient regime and
$\min_t\|\nabla f(X^t)\|_F = O(T^{-3/4})$ in the large-gradient regime,
corresponding to complexities $O(\varepsilon^{-2})$ and
$O(\varepsilon^{-4/3})$, respectively. The large-gradient rate strictly
improves upon the standard $O(T^{-1/2})$ bound of unaccelerated gradient
descent under global $L$-smoothness. The additive no-max complexity bounds
are unconditional and do not require monotonicity of the gradient norm,
quasiconvexity, or any gradient-dominance condition.

Unlike the Bregman-based Dyn-NoLips method~\cite{Dragomir21}, which requires
global a priori knowledge of the spectral constants $L_F$ and $\mu_F$ to fix
the geometry of the distance kernel (and hence necessitates a grid search
over the kernel parameters in general problems), our Euclidean
generalized-smoothness framework uses the constants $L_0$ and $L_{2/3}$
exclusively for step-size computation. This decoupling allows on-the-fly
adaptive estimation of the smoothness constants without violating convergence
guarantees, making the proposed methods robust on ill-conditioned or
ravine-like landscapes where fixed-kernel Bregman methods struggle.

Numerical experiments on the SymNMF problem with the TDT2, Reuters, and CBCL
datasets confirm the theoretical predictions. On text data with a predictable
landscape, adaptive backtracking reduces the iteration count by a factor of
approximately five compared with the basic adaptive GD and matches or
outperforms Dyn-NoLips. On face images with a ravine-like landscape,
SafeAdam-$\ell$ achieves the best convergence rate and the minimal wall-clock
time, substantially outperforming both GD variants and Dyn-NoLips. In all
cases the proposed methods do not require tuning of the relative-smoothness
kernel parameters.

\section*{Structure of the Paper}
The paper consists of an introduction and nine main sections, in addition to an appendix. In Sect.~1, we introduce the matrix notation, the restricted operator norm on symmetric matrices, and the problem statement, motivating the Euclidean generalized smoothness framework. Sect.~2 is devoted to the property of generalized smoothness, where we prove the $(2/3, L_0, L_{2/3})$-smoothness inequality with explicit constants. In Sect.~3, we define the $q$-transform of the smoothness modulus and establish the integral estimates for the function increment. Sect.~4 presents the adaptive gradient descent method with Armijo backtracking, along with its convergence analysis and two-regime complexity bounds. In Sect.~5, we propose the unconstrained SafeAdam-$\ell$ method, combining the Adam direction with a descent-angle certificate, and prove its global convergence. Sect.~6 introduces the projected certified Adam method and analyzes its convergence in the gradient-mapping norm. In Sect.~7, we discuss the relation to the gradient projection method and explain the local equivalence between the gradient mapping and the gradient projection. Sect.~8 presents numerical experiments on symmetric nonnegative matrix factorization, comparing the proposed methods with the Dyn-NoLips baseline on text and image datasets. In the last Section~9, we review the obtained results and summarize the two-regime convergence rates. In the appendix, we provide the pseudocode for the SafeAdam-$\ell$ algorithms and list the problem classes where the grid search for Bregman kernel parameters is unavoidable.

\section{The Property of Generalized Smoothness of the Objective Function}
\label{sec:generalized_smoothness}

\begin{definition}[\cite{liu2025warmup}]
	A function \(f\) is called \emph{\((p, L_0, L_p)\)-smooth} if there exist \(L_0 \ge 0\), \(L_p \ge 0\) and \(p\in[0,1)\) such that
	\begin{equation}\label{def_gensmooth}
		\|\nabla^2 f(X)\|_{\mathrm{op}} \le L_0 + L_p \|\nabla f(X)\|_F^p, \quad X\in\mathbb{R}^{n\times r},
	\end{equation}
	where $\|\nabla^2 f(X)\|_{\mathrm{op}}$ denotes the operator norm of the Hessian induced by the Frobenius norm.
	In the case \(p<1\) the convergence of all the integrals used is preserved also for \(L_0=0\), 
	since the function \(g^{-p}\) is integrable at zero.
\end{definition}

\begin{remark}
	For \(p=0\) this is the classical \(L\)-smoothness; the case \(p<1\) reflects a sublinear growth of the Hessian with respect to the growth of the gradient.
\end{remark}

\begin{remark}[Two regimes of the smoothness model]
	The constant \(L_0\) and the term \(L_p\|\nabla f(X)\|^p\) play different
	roles depending on the magnitude of the gradient. When \(\|\nabla f(X)\|\)
	is small, the constant part \(L_0\) dominates, and the Hessian bound
	behaves as in the classical \(L\)-smoothness case. When \(\|\nabla f(X)\|\)
	is large, the fractional-power term \(L_p\|\nabla f(X)\|^p\) dominates, so
	the Hessian grows sublinearly with the gradient. As shown below, this
	dichotomy is inherited by the convergence rates: the small-gradient regime
	yields a square-root-type guarantee, whereas the large-gradient regime
	yields a three-quarters-exponent guarantee.
\end{remark}

\begin{remark}[Relaxed requirements on $\ell$]
	In the general theory of $\ell$-smoothness \cite{tyurin2025toward} it is assumed that the function 
	$\ell:[0,\infty)\to(0,\infty)$ is nondecreasing, positive and 
	{\it locally Lipschitz}. 
	For the case considered by us 
	\[
	\ell( g ) = L_0 + L_{2/3} g^{2/3},
	\qquad
	L_0 \ge 0,\quad L_{2/3} \ge0,
	\qquad
	L_0 + L_{2/3} > 0,
	\]
	where $g$ is the norm of the gradient at the point under consideration,
	local Lipschitz continuity fails at zero if \(L_0=0\). 
	However, in the present work we show that in order to obtain all the key results it suffices to have the following relaxed conditions:
	\begin{itemize}
		\item \(\ell\) is continuous and nonnegative on \([0,\infty)\);
		\item \(\ell\) is positive on \((0,\infty)\);
		\item \(\ell\) is nondecreasing;
		\item for any \( a \ge 0\) the integral 
		\[
		\int_{0}^{s}\frac{dv}{\ell(a+v)}
		\]
		converges for all \(s\ge0\).
	\end{itemize}
	For \(\ell(g) = L_0+L_{2/3} g^{2/3}\) the convergence of the integral is guaranteed by the fact 
	that either \(L_0>0\), or \(L_0=0\), and then the singularity at zero has the form
	\[
	\frac{1}{L_{2/3}v^{2/3}},
	\]
	and the integral of \(v^{-2/3}\) is finite. Therefore all the references to the results 
	for \(\ell\)-smooth functions from \cite{tyurin2025toward} remain legitimate.
\end{remark}

We show that for $f$ the following estimate of the Hessian holds:
\begin{equation}
	\left \lVert \nabla^2 f\left(X\right) \right \rVert \le L_0 + L_{2/3} {\left \lVert \nabla f\left(X\right) \right \rVert}^{2/3},
\end{equation}

\begin{remark}
	The original function $F$ is $\mu$-strongly convex (see Assumption~\ref{asm:quadratic}),
	however the factorized function $f(X)=F(XX^\top)$ is \textbf{not} convex (and all the more not
	strongly convex). 
	The strong convexity of $F$ is used (without loss of practical applicability) exclusively as a technical
	tool for obtaining a lower bound on the norm of $\nabla f(X)$ (Theorem~\ref{thm:lower-bound}),
	which then allows one to prove the generalized smoothness property of $f$.
\end{remark}

The function $f(X) = F(XX^\top)$ for a quadratic $F$ is a polynomial of the fourth degree 
(not taking into account the regularizer $R$). 

Let \(F(Y) = \frac12 \langle \mathcal{A}Y,Y\rangle + \langle B,Y\rangle\), where \(\mathcal{A}\) satisfies Assumption~\ref{asm:quadratic}, i.e., \(\mathcal{A}\) is \(\mu\)-positive definite on the subspace of symmetric matrices \(\mathbb{S}^{n\times n}\). 
Then \(f(X) = \frac12 \langle \mathcal{A}(XX^\top), XX^\top\rangle + \langle B, XX^\top\rangle\) 
is not homogeneous in the general case (except for the case $B=0$), since it contains 
simultaneously terms of degree 4 and of degree 2 in $X$.

Direct computations (cf.~\cite{Dragomir21}) give, for the quadratic function \eqref{eq_quadratic_f},
\[
\|\nabla^2 f(X)\|
\le 6\|\mathcal{A}\|_{\mathbb{S},\mathrm{op}}\|X\|_F^2 + 2\|\nabla F(0)\|_F
= 6\|\mathcal{A}\|_{\mathbb{S},\mathrm{op}}\|X\|_F^2 + 2\|B\|_F.
\]
Here \(\|\mathcal{A}\|_{\mathbb{S},\mathrm{op}}\) is the operator norm of the restriction of \(\mathcal{A}\) to the subspace of symmetric matrices, introduced in Remark~\ref{rem:norm-A}. This restricted norm is natural because the Hessian of the factorized function applies \(\mathcal{A}\) only to symmetric matrices. If desired, one may replace it by the full operator norm on \(\mathbb{R}^{n\times n}\), obtaining a valid but generally more conservative bound.

If \(L_F\) denotes a Lipschitz constant of \(\nabla F\) on the subspace of symmetric matrices \(\mathbb{S}^{n \times n}\), then for the quadratic function \(F\) one may take
\[
L_F = \|\mathcal{A}\|_{\mathbb{S},\mathrm{op}}.
\]

\begin{remark}[Twice differentiability of $f$] 
	Since $F$ is a quadratic function, it belongs to the class $C^\infty$; the map $\phi(X)=XX^\top$ is polynomial, hence it also belongs to the class $C^\infty$. 
	Consequently, $f = F\circ\phi$ belongs to the class $C^\infty$, and in particular is twice continuously differentiable.
\end{remark}

\begin{lemma}\label{lem:norm_XX}
	For any matrix \(X\in\mathbb{R}^{n\times r}\) it holds that
	\[
	\|XX^\top\|_F \le \|X\|_F^2.
	\]
\end{lemma}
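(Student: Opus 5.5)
The inequality follows from submultiplicativity of the Frobenius norm together with $\|X^\top\|_F=\|X\|_F$. The plan is to write
\[
\|XX^\top\|_F \le \|X\|_F\,\|X^\top\|_F = \|X\|_F^2 ,
\]
so the only ingredient to justify is $\|PQ\|_F\le\|P\|_F\|Q\|_F$ for conformable matrices $P,Q$. To keep the argument self-contained, I would prove this directly with the Cauchy--Schwarz inequality.

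Let $p_i^\top$ be the rows of $P$ and $q_j$ the columns of $Q$. Cauchy--Schwarz gives $(PQ)_{ij}^2=\langle p_i,q_j\rangle^2\le\|p_i\|^2\|q_j\|^2$. Summing over $i,j$ yields
\[
\|PQ\|_F^2\le\Bigl(\sum_i\|p_i\|^2\Bigr)\Bigl(\sum_j\|q_j\|^2\Bigr)=\|P\|_F^2\,\|Q\|_F^2 .
\]
Taking $P=X$ and $Q=X^\top$ finishes the proof.

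As an optional alternative, and a check on sharpness, one can use singular values. With $\sigma_1,\dots,\sigma_r\ge 0$ the singular values of $X$, we have $\|XX^\top\|_F^2=\sum_k\sigma_k^4$ and $\|X\|_F^4=\bigl(\sum_k\sigma_k^2\bigr)^2$. The inequality $\sum_k\sigma_k^4\le\bigl(\sum_k\sigma_k^2\bigr)^2$ holds because all cross terms are nonnegative, with equality exactly when $\operatorname{rank}X\le 1$. There is no real obstacle here; the only care needed is to state the submultiplicativity step explicitly rather than silently assume it.
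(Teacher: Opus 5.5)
Your proof is correct, but your main argument is not the paper's. The paper's proof is exactly your ``optional alternative'': it takes a full SVD $X=U\Sigma V^\top$ and uses $\|XX^\top\|_F^2=\sum_i\sigma_i^4\le\bigl(\sum_i\sigma_i^2\bigr)^2=\|X\|_F^4$.

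Your primary route instead proves submultiplicativity $\|PQ\|_F\le\|P\|_F\|Q\|_F$ by applying Cauchy--Schwarz to each row--column inner product. This is more elementary, since no spectral decomposition is needed. It is also more general, since it holds for arbitrary conformable $P,Q$ and not only for $Q=P^\top$.

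The SVD viewpoint, which is probably why the paper uses it, expresses both quantities through the single vector $s=(\sigma_1^2,\dots,\sigma_r^2)$, with $\|XX^\top\|_F=\|s\|_2$ and $\|X\|_F^2=\|s\|_1$. The lemma then reads $\|s\|_2\le\|s\|_1$. The paper also needs the reverse bound $\|X\|_F^2\le\sqrt r\,\|XX^\top\|_F$, in Theorem~\ref{thm:lower-bound} and Lemma~\ref{lem:bounded_gradients}. In this notation it is just the companion inequality $\|s\|_1\le\sqrt r\,\|s\|_2$. Your entrywise argument gives only the upper bound, so the paper would still need the singular-value computation for the lower one.
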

\begin{proof}
	Let \(X = U \Sigma V^\top\) be a full singular value decomposition,
	where \(U\in\mathbb{R}^{n\times n}\) and \(V\in\mathbb{R}^{r\times r}\) are orthogonal,
	and \(\Sigma\in\mathbb{R}^{n\times r}\) is rectangular diagonal with nonnegative
	diagonal entries \(\sigma_1,\dots,\sigma_p\), where \(p = r \).
	Then \(XX^\top = U\Sigma\Sigma^\top U^\top\) and
	\[
	\|XX^\top\|_F^2 = \mathrm{Tr} \left(\Sigma\Sigma^\top\Sigma\Sigma^\top\right)
	= \sum_{i=1}^p \sigma_i^4
	\le \left( \sum_{i=1}^p \sigma_i^2 \right)^2
	= \|X\|_F^4,
	\]
	since all \(\sigma_i^2\ge0\). Taking the square root, we obtain the required statement.
\end{proof}

\begin{theorem}[Lower bound on the gradient norm for $f(X)=F \left(XX^\top \right)$]
	\label{thm:lower-bound}
	Let $F(Y)=\frac12\langle\mathcal{A}Y,Y\rangle + \langle B,Y\rangle$, where
	\begin{itemize}
		\item $\mathcal{A}:\mathbb{R}^{n\times n}\to\mathbb{R}^{n\times n}$ is a linear self-adjoint operator mapping $\mathbb{S}^{n\times n}$ into itself;
		\item $B\in\mathbb{S}^{n\times n}$ is a symmetric matrix.
	\end{itemize}
	We assume that $F$ is \emph{strongly convex} with parameter $\mu>0$, i.\,e.
	\begin{equation}\label{strong-convex}
		\langle\mathcal{A}Y,Y\rangle \ge \mu\|Y\|_F^2 \qquad \forall\,Y\in\mathbb{S}^{n\times n}.
	\end{equation}
	Then for any \(X\in\mathbb{R}^{n\times r}\) it holds that
	\begin{equation}\label{final-lower}
		\|\nabla f(X)\|_F \ge \frac{2\mu}{r}\|X\|_F^3 - 2\|B\|_F\|X\|_F.
	\end{equation}
	In particular, if \(B=0\), the inequality holds for all \(X\) and takes the form
	\[
	\|\nabla f(X)\|_F \ge \frac{2\mu}{r}\|X\|_F^3.
	\]
\end{theorem}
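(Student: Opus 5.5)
We test the gradient against the direction $X$ itself and use Cauchy--Schwarz. First compute the gradient. For a direction $U$, the derivative of $XX^\top$ is $XU^\top+UX^\top$, so
\[
\langle \nabla f(X),U\rangle=\langle \nabla F(XX^\top), XU^\top+UX^\top\rangle .
\]
Here $\nabla F(Y)=\mathcal{A}Y+B$ by self-adjointness. For $Y=XX^\top$ symmetric, $\nabla F(Y)$ is symmetric, because $\mathcal{A}$ preserves $\mathbb{S}^{n\times n}$ and $B$ is symmetric. Hence $\nabla f(X)=2(\mathcal{A}(XX^\top)+B)X$.

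Take $U=X$. Cauchy--Schwarz gives
\[
\|\nabla f(X)\|_F\|X\|_F\ge\langle\nabla f(X),X\rangle=2\langle \mathcal{A}(XX^\top),XX^\top\rangle+2\langle B,XX^\top\rangle .
\]
Bound the first term below by strong convexity on symmetric matrices, \eqref{strong-convex}, applied to $Y=XX^\top$: it is at least $2\mu\|XX^\top\|_F^2$. Bound the second term below by $-2\|B\|_F\|XX^\top\|_F$, and then by $-2\|B\|_F\|X\|_F^2$ using Lemma~\ref{lem:norm_XX}.

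The key remaining step, and the source of the factor $1/r$, is the reverse inequality $\|XX^\top\|_F^2\ge \|X\|_F^4/r$. In the singular value notation of Lemma~\ref{lem:norm_XX}, $\|XX^\top\|_F^2=\sum_{i=1}^r\sigma_i^4$. Cauchy--Schwarz over the at most $r$ nonzero singular values gives $\left(\sum_i\sigma_i^2\right)^2\le r\sum_i\sigma_i^4$, which is exactly what is needed.

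Combining the bounds,
\[
\|\nabla f(X)\|_F\|X\|_F\ge \frac{2\mu}{r}\|X\|_F^4-2\|B\|_F\|X\|_F^2 .
\]
For $X\neq0$, divide by $\|X\|_F$ to obtain \eqref{final-lower}. For $X=0$ both sides of \eqref{final-lower} are zero, so it holds trivially. The case $B=0$ follows immediately.

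The main point to handle carefully is the gradient formula. Symmetry of $\mathcal{A}(XX^\top)+B$ is what turns the two terms $\langle G,XU^\top\rangle$ and $\langle G,UX^\top\rangle$ into the single expression $2GX$, and it also guarantees that strong convexity is only ever invoked on $\mathbb{S}^{n\times n}$. Beyond that, the argument is Cauchy--Schwarz used twice.
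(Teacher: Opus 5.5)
Your proposal is correct and follows the paper's proof essentially step for step. Both derive $\nabla f(X)=2(\mathcal{A}(XX^\top)+B)X$ from symmetry and pair it with $X$, then bound $\langle\mathcal{A}(XX^\top),XX^\top\rangle\ge\frac{\mu}{r}\|X\|_F^4$ via Cauchy--Schwarz on the squared singular values and $\langle B,XX^\top\rangle\ge-\|B\|_F\|X\|_F^2$ via Lemma~\ref{lem:norm_XX}, and finally divide by $\|X\|_F$ after treating $X=0$ separately.
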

\begin{proof}
	Let us consider the factorized function
	\[
	f(X)=F(XX^\top)=\frac12\langle\mathcal{A}(XX^\top),XX^\top\rangle+\langle B,XX^\top\rangle,\qquad X\in\mathbb{R}^{n\times r}.
	\]
	
	We have
	\begin{equation}\label{gradient}
		\nabla f(X)=2\bigl(\mathcal{A}(XX^\top)+B\bigr)X.
	\end{equation}
	
	Indeed, for any direction $V$ we have
	\[
	\langle\nabla f(X),V\rangle
	= \langle\mathcal{A}(XX^\top),XV^\top+VX^\top\rangle + \langle B,XV^\top+VX^\top\rangle.
	\]
	Since \(XX^\top \in \mathbb{S}^{n\times n}\), and the operator \(\mathcal{A}\) 
	maps symmetric matrices into symmetric ones, the matrix 
	\(\mathcal{A}(XX^\top)\) is symmetric. Consequently,
	\[
	\langle\mathcal{A}(XX^\top),XV^\top+VX^\top\rangle
	= 2\langle\mathcal{A}(XX^\top)X,V\rangle.
	\]
	Similarly, due to the symmetry of \(B\),
	\[
	\langle B,XV^\top+VX^\top\rangle = 2\langle BX,V\rangle.
	\]
	From here we obtain
	\[
	\langle\nabla f(X),V\rangle = 2\langle(\mathcal{A}(XX^\top)+B)X,V\rangle.
	\]
	
	Let us multiply \eqref{gradient} scalarly by $X$:
	\[
	\langle\nabla f(X),X\rangle
	= 2\bigl\langle\mathcal{A}(XX^\top)X,\,X\bigr\rangle + 2\bigl\langle BX,\,X\bigr\rangle.
	\]
	
	By the cyclic property of the trace,
	\[
	\langle\mathcal{A}(XX^\top)X,X\rangle
	= \mathrm{Tr}\bigl(X^\top\mathcal{A}(XX^\top)X\bigr)
	= \mathrm{Tr}\bigl(\mathcal{A}(XX^\top)XX^\top\bigr)
	= \langle\mathcal{A}(XX^\top),XX^\top\rangle.
	\]
	The same cyclic property gives, without any symmetry assumption on \(B\),
	\[
	\langle BX,X\rangle = \mathrm{Tr}(X^\top BX)=\mathrm{Tr}(BXX^\top)=\langle B,XX^\top\rangle.
	\]
	We note that the symmetry assumptions on \(\mathcal{A}(XX^\top)\) and \(B\) were used only
	in the preceding derivation of the gradient formula.
	
	Consequently,
	\begin{equation}\label{inner}
		\langle\nabla f(X),X\rangle = 2\Bigl(\langle\mathcal{A}(XX^\top),XX^\top\rangle + \langle B,XX^\top\rangle\Bigr).
	\end{equation}
	
	From the strong convexity of $F \left( Y \right)$ (\ref{strong-convex}) with $Y=XX^\top$ we have
	\[
	\langle\mathcal{A}(XX^\top),XX^\top\rangle \ge \mu\|XX^\top\|_F^2.
	\]
	Let us denote by $\sigma_1,\dots,\sigma_p$ the singular values of the matrix $X$, where \(p = r \).
	Then
	\[
	\|X\|_F^2 = \sum_{i=1}^p \sigma_i^2,\qquad
	\|XX^\top\|_F^2 = \|X^\top X\|_F^2 = \sum_{i=1}^p \sigma_i^4.
	\]
	
	\[
	\biggl( \sum_{i=1}^p a_i b_i \biggr)^{\!2}
	\le \biggl( \sum_{i=1}^p a_i^2 \biggr) \biggl( \sum_{i=1}^p b_i^2 \biggr),
	\qquad a_i, b_i \in \mathbb{R}.
	\]
	
	Set $a_i = 1$, $b_i = \sigma_i^2$ for \(i=1,\dots,p\).  Then
	\[
	\biggl( \sum_{i=1}^p 1 \cdot \sigma_i^2 \biggr)^{\!2}
	\le \biggl( \sum_{i=1}^p 1^2 \biggr) \biggl( \sum_{i=1}^p (\sigma_i^2)^2 \biggr)
	= p \cdot \sum_{i=1}^p \sigma_i^4 .
	\]
	Consequently,
	\[
	\sum_{i=1}^p \sigma_i^4 \;\ge\; \frac{1}{p}
	\biggl( \sum_{i=1}^p \sigma_i^2 \biggr)^{\!2}.
	\]
	
	\[
	\sqrt{\sum_{i=1}^p \sigma_i^4} \ge \frac{1}{\sqrt{p}}\sum_{i=1}^p \sigma_i^2
	\quad\Longrightarrow\quad
	\|XX^\top\|_F \ge \frac{1}{\sqrt{p}}\|X\|_F^2.
	\]
	After squaring we obtain
	\[
	\|XX^\top\|_F^2 \ge \frac{1}{p}\|X\|_F^4.
	\]
	Consequently,
	\begin{equation}\label{quad-term}
		\langle\mathcal{A}(XX^\top),XX^\top\rangle \ge \frac{\mu}{p}\|X\|_F^4.
	\end{equation}
	
	By Lemma~\ref{lem:norm_XX} it holds that \(\|XX^\top\|_F \le \|X\|_F^2\).
	
	Applying the Cauchy--Schwarz inequality and the estimate $\|XX^\top\|_F\le\|X\|_F^2$, we have
	\[
	|\langle B,XX^\top\rangle| \le \|B\|_F\|XX^\top\|_F \le \|B\|_F\|X\|_F^2,
	\]
	whence
	\begin{equation}\label{lin-term}
		\langle B,XX^\top\rangle \ge -\|B\|_F\|X\|_F^2.
	\end{equation}
	
	Substituting (\ref{quad-term}) and (\ref{lin-term}) into (\ref{inner}), we obtain
	\[
	\langle\nabla f(X),X\rangle \ge \frac{2\mu}{p}\|X\|_F^4 - 2\|B\|_F\|X\|_F^2.
	\]
	On the other hand, by the Cauchy--Schwarz inequality,
	\[
	\langle\nabla f(X),X\rangle \le \|\nabla f(X)\|_F\|X\|_F.
	\]
	Thus,
	\[
	\|\nabla f(X)\|_F\|X\|_F \ge \frac{2\mu}{p}\|X\|_F^4 - 2\|B\|_F\|X\|_F^2.
	\]
	If \(X = 0\), the desired inequality \eqref{final-lower} is trivially true.  
	Assume \(X \neq 0\). Dividing both sides by \(\|X\|_F > 0\), we obtain
	\[
	\|\nabla f(X)\|_F \ge \frac{2\mu}{p}\|X\|_F^3 - 2\|B\|_F\|X\|_F.
	\]
	This proves the theorem for all \(X \in \mathbb{R}^{n\times r}\).
\end{proof}

\begin{remark}[On the symmetry of $B$]
	\label{rem:sym-B}
	Under the conditions of Theorem~\ref{thm:lower-bound} the matrix \(B\) is assumed to be symmetric. This requirement is necessary for the correctness of the gradient formula of the factorized function
	\[
	\nabla f(X)=2\bigl(\mathcal{A}(XX^\top)+B\bigr)X .
	\]
	Indeed, if \(B\) is nonsymmetric, then the derivative of the term \(\langle B, XX^\top\rangle\) equals \((B+B^\top)X\), and not \(2BX\). Therefore, without the symmetry of the matrix \(B\) all the further computations using this gradient formula become incorrect.
	
	At the same time the equality itself
	\[
	\langle BX, X\rangle = \langle B, XX^\top\rangle
	\]
	holds for any square matrix \(B\) due to the cyclicity of the trace. 
	The symmetry of \(B\) is not required here.
	
	If in an applied problem the matrix \(B\) is nonsymmetric, it can be replaced, without changing the function \(F\), by its symmetric part
	\[
	B \mapsto \frac{B+B^\top}{2},
	\]
	since for a symmetric argument \(Y=XX^\top\) it holds that
	\[
	\langle B, Y\rangle = \left\langle \frac{B+B^\top}{2}, Y\right\rangle .
	\]
	After such a replacement the gradient formula takes the canonical form \(2(\mathcal{A}(XX^\top)+B)X\), and all the further results of the paper remain applicable.
\end{remark}

\begin{theorem}[Generalized smoothness of the function $f(X)=F(XX^\top)$]
	Let $F(Y)=\frac12\langle\mathcal{A}Y,Y\rangle + \langle B,Y\rangle$, where
	\begin{itemize}
		\item $\mathcal{A}\colon\mathbb{R}^{n\times n}\to\mathbb{R}^{n\times n}$ is a linear self-adjoint operator mapping $\mathbb{S}^{n\times n}$ into itself and satisfying the condition $\langle\mathcal{A}Y,Y\rangle \ge \mu\|Y\|_F^2$ for all $Y\in\mathbb{S}^{n\times n}$.
		
		\item $B\in\mathbb{S}^{n\times n}$ is a symmetric matrix.
	\end{itemize}
	Then the function $f(X)=F(XX^\top)$ satisfies
	the \emph{generalized $(p, L_0, L_p)$-smoothness condition} with exponent $p=2/3$ and constants
	$L_0, L_{2/3}$ defined in \eqref{L-constants}:
	\begin{equation}\label{final-p-L0Lp}
		\boxed{\;\|\nabla^2 f(X)\| \le L_0 \;+\; L_{2/3}\,\|\nabla f(X)\|_F^{2/3}\qquad \forall X\in\mathbb{R}^{n\times r}\;}
	\end{equation}
\end{theorem}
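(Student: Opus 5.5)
The plan is to combine the upper Hessian bound quoted before the theorem,
\[
\|\nabla^2 f(X)\| \le 6\|\mathcal{A}\|_{\mathbb{S},\mathrm{op}}\|X\|_F^2 + 2\|B\|_F,
\]
with the lower gradient bound of Theorem~\ref{thm:lower-bound}. The latter will be used to control $\|X\|_F^2$ by $\|\nabla f(X)\|_F^{2/3}$ up to an additive constant.

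First I would make the Hessian bound self-contained. For a direction $U$, differentiating \eqref{gradient} gives
\[
\nabla^2 f(X)[U,U] = \langle \mathcal{A}(XU^\top+UX^\top), XU^\top+UX^\top\rangle + 2\langle \mathcal{A}(XX^\top)+B, UU^\top\rangle .
\]
Here $\mathcal{A}$ is applied only to symmetric matrices. Bounding $\|XU^\top+UX^\top\|_F\le 2\|X\|_F\|U\|_F$, $\|UU^\top\|_F\le\|U\|_F^2$ and $\|\mathcal{A}(XX^\top)\|_F\le \|\mathcal{A}\|_{\mathbb{S},\mathrm{op}}\|X\|_F^2$ (Lemma~\ref{lem:norm_XX}) gives the bound $6\|\mathcal{A}\|_{\mathbb{S},\mathrm{op}}\|X\|_F^2+2\|B\|_F$. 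Since the Hessian is self-adjoint, the quadratic form bound gives the operator norm bound.

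Next, write $s=\|X\|_F$ and $g=\|\nabla f(X)\|_F$, and split into two regions. Set $s_0^2 := 2r\|B\|_F/\mu$, so that for $s^2\ge s_0^2$ one has $2\|B\|_F s\le \frac{\mu}{r}s^3$. In that region Theorem~\ref{thm:lower-bound} yields $g\ge \frac{\mu}{r}s^3$, hence $s^2\le (r/\mu)^{2/3} g^{2/3}$. In the complementary region $s^2< s_0^2$ we simply use $s^2\le s_0^2$. In both cases
\[
s^2 \le \frac{2r\|B\|_F}{\mu} + \Bigl(\frac{r}{\mu}\Bigr)^{2/3} g^{2/3}.
\]

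Substituting into the Hessian bound gives \eqref{final-p-L0Lp} with
\[
L_0 = 2\|B\|_F + \frac{12 r\|\mathcal{A}\|_{\mathbb{S},\mathrm{op}}\|B\|_F}{\mu}, \qquad L_{2/3} = 6\|\mathcal{A}\|_{\mathbb{S},\mathrm{op}}\Bigl(\frac{r}{\mu}\Bigr)^{2/3}.
\]
These are the constants I would record as \eqref{L-constants}. When $B=0$ this gives $L_0=0$, consistent with the relaxed assumptions on $\ell$ discussed earlier.

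The only delicate point is the case split. The lower bound \eqref{final-lower} is vacuous for small $\|X\|_F$, so a global inversion $s\mapsto g$ is unavailable. The threshold $s_0$ has to be chosen so that the cubic term absorbs the negative linear term with a fixed fraction to spare; I would use the fraction one half, as above. The small-$s$ region is then absorbed into $L_0$.
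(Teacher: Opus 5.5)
Your proposal is correct and matches the paper's proof step for step. It uses the same threshold $\|X\|_F^2 = 2r\|B\|_F/\mu$ (the paper's $2\beta/\alpha$ with $\alpha=2\mu/r$, $\beta=2\|B\|_F$), absorbs the linear term with the same factor one half, combines the cases via $\max\{a,b\}\le a+b$, and obtains identical constants $L_0$ and $L_{2/3}$. The only difference is that you derive the Hessian bound $6\|\mathcal{A}\|_{\mathbb{S},\mathrm{op}}\|X\|_F^2+2\|B\|_F$ yourself from the second-order chain rule and self-adjointness of the Hessian, whereas the paper cites it from Dragomir et al.; that derivation is correct and makes the argument self-contained.
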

\begin{proof}
	Let us recall the key estimates.
	
	For a quadratic $F(Y)=\frac12\langle\mathcal{A}Y,Y\rangle+\langle B,Y\rangle$,
	direct computations (cf.~Proposition~2.1 in \cite{Dragomir21}) give
	\begin{equation}\label{Hess-upper}
		\|\nabla^2 f(X)\| \le 6\|\mathcal{A}\|_{\mathbb{S},\mathrm{op}}\,\|X\|_F^2 \;+\; 2\|B\|_F \qquad \forall X\in\mathbb{R}^{n\times r}.
	\end{equation}
	Here $\|\mathcal{A}\|_{\mathbb{S},\mathrm{op}}$ denotes the operator norm of the restriction of $\mathcal{A}$ to the subspace of symmetric matrices $\mathbb{S}^{n\times n}$, as defined in Remark~\ref{rem:norm-A}. This restricted norm is sufficient because in the Hessian of $f$ the operator $\mathcal{A}$ is applied only to symmetric matrices. The full operator norm on $\mathbb{R}^{n\times n}$ may be used as a conservative upper bound.
	(Everywhere below $\|X\|=\|X\|_F$.)
	
	Under the condition of strong convexity of $F$ with parameter $\mu>0$, in Theorem~\ref{thm:lower-bound}
	a lower bound on the gradient was established
	\begin{equation}\label{grad-lower}
		\|\nabla f(X)\|_F \ge \alpha\|X\|^3 \;-\; \beta\|X\|,
	\end{equation}
	where
	\[
	\alpha = \frac{2\mu}{ r }, \qquad \beta = 2\|B\|_F.
	\]
	
	Let us show that there exist explicit constants $C_1>0$ and $C_2\ge0$ for which
	\begin{equation}\label{X2-to-grad}
		\|X\|^2 \le C_1\,\|\nabla f(X)\|_F^{2/3} \;+\; C_2 \qquad \forall X\in\mathbb{R}^{n\times r}.
	\end{equation}
	
	Let us consider two cases depending on the magnitude of $\|X\|$.
	
	\smallskip\noindent\textit{Case 1 (small $X$).}
	If $\|X\| \le \sqrt{2\beta/\alpha}$, then trivially
	\[
	\|X\|^2 \le \frac{2\beta}{\alpha}.
	\]
	
	\smallskip\noindent\textit{Case 2 (large $X$).}
	Let $\|X\| > \sqrt{2\beta/\alpha}$. Then $\beta\|X\| < \frac{\alpha}{2}\|X\|^3$, and from \eqref{grad-lower} we obtain
	\[
	\|\nabla f(X)\|_F \ge \alpha\|X\|^3 - \frac{\alpha}{2}\|X\|^3 = \frac{\alpha}{2}\,\|X\|^3,
	\]
	whence
	\[
	\|X\|^3 \le \frac{2}{\alpha}\,\|\nabla f(X)\|_F,\qquad
	\|X\|^2 \le \Bigl(\frac{2}{\alpha}\Bigr)^{\!2/3}\|\nabla f(X)\|_F^{2/3}.
	\]
	
	\smallskip\noindent\textit{Combination of the cases.}
	For any $X$ it holds that
	\[
	\|X\|^2 \le \max\!\Bigl\{\frac{2\beta}{\alpha},\; \Bigl(\frac{2}{\alpha}\Bigr)^{\!2/3}\|\nabla f(X)\|_F^{2/3}\Bigr\}.
	\]
	Since both arguments of the maximum are nonnegative, $\max\{a,b\}\le a+b$, consequently,
	\[
	\|X\|^2 \le \Bigl(\frac{2}{\alpha}\Bigr)^{\!2/3}\|\nabla f(X)\|_F^{2/3} \;+\; \frac{2\beta}{\alpha}.
	\]
	Thus, one can set
	\begin{equation}\label{C1C2}
		C_1 = \left(\frac{2}{\alpha}\right)^{2/3}
		= \left(\frac{ r }{\mu}\right)^{2/3},
		\qquad
		C_2 = \frac{2\beta}{\alpha}
		= \frac{2 r \|B\|_F}{\mu}.
	\end{equation}
	
	Substituting the estimate \eqref{X2-to-grad} with the constants \eqref{C1C2} into \eqref{Hess-upper}, we find
	\[
	\|\nabla^2 f(X)\|
	\le 6\|\mathcal{A}\|_{\mathbb{S},\mathrm{op}}\bigl(C_1\|\nabla f(X)\|_F^{2/3} + C_2\bigr) \;+\; 2\|B\|_F
	= L_{2/3}\,\|\nabla f(X)\|_F^{2/3} \;+\; L_0,
	\]
	where
	\begin{equation}\label{L-constants}
		\begin{aligned}
			L_{2/3}
			&= 6\|\mathcal{A}\|_{\mathbb{S},\mathrm{op}}\,C_1
			= 6\|\mathcal{A}\|_{\mathbb{S},\mathrm{op}}
			\left(\frac{r}{\mu}\right)^{2/3},\\[4pt]
			L_0
			&= 6\|\mathcal{A}\|_{\mathbb{S},\mathrm{op}}\,C_2 + 2\|B\|_F
			= 6\|\mathcal{A}\|_{\mathbb{S},\mathrm{op}}\,
			\frac{2 r \|B\|_F}{\mu}
			+ 2\|B\|_F.
		\end{aligned}
	\end{equation}
\end{proof}

\begin{lemma}[Estimate of the norm of the difference of gradients]
	\label{lem:grad-diff}
	Let \(f\colon \mathbb{R}^{n\times r}\to\mathbb{R}\) be twice continuously differentiable.
	Assume that there exist constants
	\[
	L_0\ge 0,
	\qquad
	L_{2/3}\ge 0,
	\qquad
	L_0+L_{2/3}>0,
	\]
	such that \(f\) satisfies the generalized
	\((2/3,L_0,L_{2/3})\)-smoothness condition \eqref{def_gensmooth}, i.e.
	\[
	\left\|\nabla^2 f(Z)\right\|_{\mathrm{op}}
	\le
	L_0+L_{2/3}\left\|\nabla f(Z)\right\|_F^{2/3}
	\qquad
	\forall Z\in\mathbb{R}^{n\times r}.
	\]
	Then for any \(X,Y\in\mathbb{R}^{n\times r}\),
	\[
	\left\|\nabla f(Y)-\nabla f(X)\right\|_F
	\le
	q^{-1}\Bigl(
	\left\|Y-X\right\|_F;\,
	\left\|\nabla f(X)\right\|_F
	\Bigr),
	\]
	where
	\[
	q(s;a)
	=
	\int_0^s \frac{dv}{\ell(a+v)},
	\qquad
	\ell(g)=L_0+L_{2/3}g^{2/3}.
	\]
\end{lemma}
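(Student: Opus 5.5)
The plan is the standard comparison argument along the segment joining \(X\) and \(Y\), in the spirit of \cite{tyurin2025toward}, adapted so that only the relaxed properties of \(\ell\) are used. Set \(a=\|\nabla f(X)\|_F\), \(D=\|Y-X\|_F\) and \(Z(t)=X+t(Y-X)\) for \(t\in[0,1]\). Define
\[
\varphi(t)=\|\nabla f(Z(t))-\nabla f(X)\|_F .
\]

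\textbf{Step 1: \(q^{-1}\) is well defined.} For every \(a\ge0\) the map \(s\mapsto q(s;a)\) is continuous and strictly increasing on \([0,\infty)\), since \(\ell>0\) on \((0,\infty)\). It is also unbounded, because \(\ell(a+v)\sim L_{2/3}v^{2/3}\) (or is constant if \(L_{2/3}=0\)) and \(\int^\infty v^{-2/3}\,dv=\infty\). Hence \(q(\cdot;a)\) is a bijection of \([0,\infty)\) onto itself, and it suffices to show \(q(\varphi(1);a)\le D\).

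\textbf{Step 2: differential inequality for \(\varphi\).} Since \(f\in C^2\), the map \(t\mapsto\nabla f(Z(t))\) is \(C^1\) with derivative \(\nabla^2 f(Z(t))[Y-X]\). Therefore \(\varphi\) is Lipschitz on \([0,1]\), and at almost every \(t\), by the reverse triangle inequality,
\[
\varphi'(t)\le\|\nabla^2 f(Z(t))\|_{\mathrm{op}}\,D\le D\,\ell\bigl(\|\nabla f(Z(t))\|_F\bigr).
\]
By the triangle inequality \(\|\nabla f(Z(t))\|_F\le a+\varphi(t)\), and since \(\ell\) is nondecreasing,
\[
\varphi'(t)\le D\,\ell\bigl(a+\varphi(t)\bigr)\quad\text{for a.e. } t .
\]

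\textbf{Step 3: integration, which I expect to be the main obstacle.} The natural move is to differentiate \(t\mapsto q(\varphi(t);a)\). However, when \(L_0=0\) and \(a=0\), the integrand \(1/\ell(a+v)\) blows up at \(v=0\), so \(q(\cdot;a)\) is not Lipschitz there. This is exactly the point where local Lipschitzness of \(\ell\) is missing, and the chain rule for a Lipschitz \(\varphi\) is then not immediate.

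I would bypass this by regularization. For \(\varepsilon>0\) put \(\ell_\varepsilon=\ell+\varepsilon\) and \(q_\varepsilon(s;a)=\int_0^s dv/\ell_\varepsilon(a+v)\). The inequality \(\varphi'\le D\ell(a+\varphi)\le D\ell_\varepsilon(a+\varphi)\) still holds. Moreover, \(q_\varepsilon(\cdot;a)\) is \(C^1\) with derivative bounded by \(1/\varepsilon\), so \(t\mapsto q_\varepsilon(\varphi(t);a)\) is Lipschitz and the chain rule applies a.e. This gives
\[
\frac{d}{dt}\,q_\varepsilon(\varphi(t);a)=\frac{\varphi'(t)}{\ell_\varepsilon(a+\varphi(t))}\le D \quad\text{for a.e. } t .
\]
Integrating over \([0,1]\) and using \(\varphi(0)=0\) yields \(q_\varepsilon(\varphi(1);a)\le D\).

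\textbf{Step 4: letting \(\varepsilon\to0\).} By monotone convergence, \(q_\varepsilon(\varphi(1);a)\uparrow q(\varphi(1);a)\); this limit is finite by the integrability assumption. Hence \(q(\varphi(1);a)\le D\). Applying the increasing inverse from Step 1 gives
\[
\varphi(1)\le q^{-1}(D;a),
\]
which is the claimed estimate.
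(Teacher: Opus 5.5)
Your proof is correct and is essentially the paper's argument: a comparison along the segment using a regularized modulus $\ell_\varepsilon$ bounded away from zero, integration of the resulting bound on the derivative of $q_\varepsilon$ evaluated at the gradient deviation, and a limit $\varepsilon\to0^+$. The differences are only technical. You differentiate the Lipschitz function $\varphi$ almost everywhere rather than the $C^1$ majorant $\phi(t)=\int_0^t\ell_\varepsilon(a+v(\tau))\,d\tau$ used in the paper, and you let $\varepsilon\to0^+$ in $q_\varepsilon(\varphi(1);a)\uparrow q(\varphi(1);a)$ before inverting once, which avoids the paper's separate argument that $q_\varepsilon^{-1}(\tau;a)\to q^{-1}(\tau;a)$.
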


\begin{proof}
	If \(X=Y\), then the statement is trivial, since \(q^{-1}(0;a)=0\).
	Further we assume \(X\ne Y\).
	
	Let us use a technical trick for the proof: 
	for \(\varepsilon > 0\) define the regularized function
	\[
	\ell_\varepsilon(g) = L_0 + L_{2/3}(g + \varepsilon)^{2/3}, \qquad g\ge0.
	\]
	It is continuous, positive and nondecreasing.

	Denote \(a=\|\nabla f(X)\|_F\),
	\(U=(Y-X)/\|Y-X\|_F\),
	\(h(t)=X+tU\). Then \(\|U\|_F=1\).
	Set \(v(t)=\|\nabla f(h(t))-\nabla f(X)\|_F\).
	It is clear that \(v(0)=0\).
	
	Since \(f\in C^2\), by the fundamental theorem of calculus,
	\[
	\nabla f(h(t))-\nabla f(X)
	=
	\int_0^t \nabla^2 f(h(\tau))[U]\,d\tau.
	\]
	Therefore, by the triangle inequality and since \(\|U\|_F=1\),
	\[
	v(t)
	\le
	\int_0^t
	\left\|\nabla^2 f(h(\tau))\right\|_{\mathrm{op}}
	\,d\tau.
	\]
	
	Due to the \((2/3,L_0,L_{2/3})\)-smoothness of \(f\) and the triangle inequality we have
	\[
	v(t)\le \int_0^t \|\nabla^2 f(h(\tau))\|_{\mathrm{op}} \|U\|_F\,d\tau
	\le \int_0^t \ell_\varepsilon(\|\nabla f(h(\tau))\|_F)\,d\tau
	\le \int_0^t \ell_\varepsilon(a+v(\tau))\,d\tau,
	\]
	where we used that \(\|\nabla f(h(\tau))\|_F\le a+v(\tau)\)
	and \(\ell_\varepsilon\) is nondecreasing, as well as that
	\(\|\nabla^2 f(X)\|_{\mathrm{op}} \le \ell(\|\nabla f(X)\|_F)
	\le \ell_\varepsilon(\|\nabla f(X)\|_F)\) for all \(X\).
	
	Set \(\phi(t)=\int_0^t \ell_\varepsilon(a+v(\tau))\,d\tau\).
	Then \(v(t)\le \phi(t)\), and
	\[
	\phi'(t)=\ell_\varepsilon(a+v(t))\le \ell_\varepsilon(a+\phi(t))
	\]
	due to the monotonicity of \(\ell_\varepsilon\). Thus,
	\[
	\frac{\phi'(t)}{\ell_\varepsilon(a+\phi(t))} \le 1.
	\]
	
	Let us introduce the function
	\[
	F_\varepsilon(r) = q_\varepsilon(r;a) = \int_0^r \frac{dv}{\ell_\varepsilon(a+v)}.
	\]
	Then \(F_\varepsilon'(s) = 1/\ell_\varepsilon(a+s)>0\), therefore \(F_\varepsilon\)
	is strictly increasing. Note that
	\[
	\frac{d}{dt} F_\varepsilon(\phi(t))
	= \frac{\phi'(t)}{\ell_\varepsilon(a+\phi(t))}
	\le 1.
	\]
	Integrating over \(t\) and taking into account that \(F_\varepsilon(\phi(0))=F_\varepsilon(0)=0\),
	we obtain
	\[
	F_\varepsilon(\phi(t)) \le t.
	\]
	
	On the other hand, let \(g(t)=F_\varepsilon^{-1}(t)\). Then
	\[
	F_\varepsilon(g(t))=t,
	\]
	and, differentiating this equality,
	\[
	\frac{g'(t)}{\ell_\varepsilon(a+g(t))}=1
	\quad\Longrightarrow\quad
	g'(t)=\ell_\varepsilon(a+g(t)),
	\]
	that is, \(g\) is a solution of the corresponding differential equation.
	Since \(F_\varepsilon\) is strictly increasing, from
	\(F_\varepsilon(\phi(t)) \le t = F_\varepsilon(g(t))\)
	it follows that
	\[
	\phi(t) \le g(t) = F_\varepsilon^{-1}(t) = q_\varepsilon^{-1}(t;a).
	\]
	Thus,
	\[
	v(t) \le q_\varepsilon^{-1}(t;a).
	\]
	
	For \(t=\|Y-X\|_F\) this gives
	\[
	\|\nabla f(Y)-\nabla f(X)\|_F \le q_\varepsilon^{-1}\bigl(\|Y-X\|_F;a\bigr).
	\]

	It remains to pass to the limit \(\varepsilon\to0^+\).  
	Since
	\[
	\ell_\varepsilon(g) \downarrow \ell(g) = L_0 + L_{2/3} g^{2/3}
	\qquad \text{as } \varepsilon \downarrow 0
	\quad \text{for every } g \ge 0,
	\]
	we have
	\[
	\frac{1}{\ell_\varepsilon(g)} \uparrow \frac{1}{\ell(g)}
	\qquad \text{for every } g > 0.
	\]
	By the monotone convergence theorem,
	\[
	q_\varepsilon(s; a) \uparrow q(s; a)
	= \int_0^s \frac{dv}{L_0 + L_{2/3}(a + v)^{2/3}}
	\qquad \forall s \ge 0.
	\]
	Since \(q_{\max}(a) = \int_0^\infty \frac{dv}{L_0 + L_{2/3}(a + v)^{2/3}} = \infty\),
	the functions \(q_\varepsilon(\cdot;a)\) and \(q(\cdot;a)\) are defined on the whole 
	\([0,\infty)\) and are strictly increasing. From the monotone convergence 
	\(q_\varepsilon\uparrow q\) the pointwise convergence of the inverse functions follows:
	\[
	q_\varepsilon^{-1}(\tau; a)\downarrow q^{-1}(\tau; a)
	\qquad \forall \tau\ge0.
	\]
	Indeed, for any \(\tau\ge0\) set 
	\(r_\varepsilon = q_\varepsilon^{-1}(\tau;a)\). Then 
	\(q_\varepsilon(r_\varepsilon)=\tau\), and due to the monotonicity 
	\(q_\varepsilon\uparrow q\) we obtain \(q(r_\varepsilon)\ge\tau\), 
	whence \(r_\varepsilon\ge q^{-1}(\tau;a)\). On the other hand, for 
	any \(r > q^{-1}(\tau;a)\) it holds that \(q(r)>\tau\), therefore for 
	sufficiently small \(\varepsilon\) we have \(q_\varepsilon(r) > \tau\), 
	which gives \(r_\varepsilon \le r\). Thus, 
	\(\lim_{\varepsilon\to0^+} q_\varepsilon^{-1}(\tau;a) = q^{-1}(\tau;a)\).
	
	Passing to the limit in the inequality
	\[
	\|\nabla f(Y)-\nabla f(X)\|_F \le q_\varepsilon^{-1}\bigl(\|Y-X\|_F;a\bigr),
	\]
	whose left-hand side does not depend on \(\varepsilon\),
	we obtain the required statement.
\end{proof}

\begin{remark}[On the case \(L_0=0\)]
	The proof of Lemma~\ref{lem:grad-diff} uses regularization and does not require
	local Lipschitz continuity of \(\ell\), therefore it remains valid also for
	\(L_0=0\), \(L_{2/3}>0\). 
	No separate consideration is required.
\end{remark}

\subsection{Key Inequalities for \(\ell( g ) = L_0 + L_{2/3} g^{2/3}\)}

The key tool for the analysis of gradient descent for $\ell$-smooth functions in the work~\cite{tyurin2025toward} consists of two lemmas: the estimate of the gradient increment (Lemma~4.3) and the estimate of the function increment (Lemma~4.5). Their standard proof relies on the assumption of local Lipschitz continuity of the function~$\ell$ (see Assumption~3.1 in~\cite{tyurin2025toward}). However, for the function
\[
\ell(g)=L_0 + L_{2/3} g^{2/3},
\]
arising in our matrix factorization problem, local Lipschitz continuity fails at zero: the derivative $\ell'(g)\sim g^{-1/3}$ is unbounded as $g \to 0^+$. Nevertheless, a detailed analysis shows that in none of the steps of the proofs is local Lipschitz continuity actually used --- continuity, strict positivity and monotone nondecrease of~$\ell$ suffice.

In order to construct rigorous proofs and to justify explicitly the applicability of the mentioned estimates to our case, we give below complete proofs of both lemmas, following the logic of~\cite{tyurin2025toward}, but with relaxed requirements on~$\ell$. This simultaneously demonstrates that Tyurin's results can be extended to a wider class of functions, and serves as a foundation for all the subsequent constructions of algorithms and the analysis of their convergence.

\section{The \texorpdfstring{$q$}{q}-transform of the Smoothness Modulus and Integral Increment Estimates}
Let us consider the function \( \ell \colon[0,\infty) \to [0,\infty) \) given by the formula
\[
\ell( g ) = L_0 + L_{2/3} g^{2/3},
\]
where \(L_0 \ge 0\), \(L_{2/3} \ge 0\), and moreover \(L_0 + L_{2/3} > 0\). 
The function \(\ell\) is continuous, nonnegative and nondecreasing; 
it is positive on \((0,\infty)\).

For an arbitrary \(a\ge 0\) let us define the function
\[
q(s; a) = \int_{0}^{s} \frac{dv}{\ell(a + v)},\qquad s \ge 0.
\]
Since \(\ell(a+v)>0\) for all \(v>0\), and for \(L_0=0\) the singularity at zero
has the form \(\frac{1}{L_{2/3}v^{2/3}}\), the integral is finite for any finite \( s \ge 0\).
Moreover, \(q(\cdot\,;a)\) is strictly increasing, continuously differentiable and
\(q'(s; a) = 1/\ell(a + s)>0\) for all \(s > 0\).

Set
\[
q_{\max}(a)=\int_{0}^{\infty}\frac{dv}{\ell(a+v)}.
\]
For \(L_{2/3}=0\) we have \(q_{\max}(a)=\infty\) because \(\ell(a+v)=L_0\) is constant.
For \(L_{2/3}>0\), since \(\ell(a+v) = L_0 + L_{2/3}(a+v)^{2/3} \le C v^{2/3}\) for all sufficiently large \(v\) and some constant \(C>0\), the integrand \(1/\ell(a+v)\) is bounded below by a multiple of \(v^{-2/3}\), whose integral over \([0,\infty)\) diverges. Therefore in this case as well \(q_{\max}(a)=\infty\). Consequently, for any
\(a, s \ge 0\) the quantity \(q(s; a)\) is defined and invertible; the inverse function
is denoted by \(q^{-1}(\tau; a)\) \( \left( \tau \ge 0, q^{-1}(0; a) = 0 \right) \).

Here $\tau$ is the distance between the points, $s$ is the change of the gradient from the initial point to some point.
$q \left( s; a \right)$ can be interpreted as the distance that one has to travel in order to accumulate such a change of the gradient $s$.
$a$ is the value (magnitude) of the gradient at the initial point.
$q^{-1} \left( \tau; a \right)$ maps the distance between the points into the maximal possible magnitude of the change of the gradient.
$g$ is the magnitude of the gradient norm at the point under consideration.

\begin{remark}[Convention for zero gradient and early stopping]
	\label{def:zero_gradient_convention}
	For the function
	\[
	\ell(g)=L_0+L_{2/3}g^{2/3},
	\]
	define, for any \(a\ge 0\),
	\[
	\Phi(a)
	:=
	\frac{a^2}{\ell(2a)}
	=
	\frac{a^2}{L_0+2^{2/3}L_{2/3}a^{2/3}}.
	\]
	We set
	\[
	\Phi(0):=0.
	\]
	Thus, whenever \(a=0\), the ratio
	\[
	\frac{a^2}{L_0+2^{2/3}L_{2/3}a^{2/3}}
	\]
	is understood as \(0\), including the case \(L_0=0\).
	
	In particular, in all convergence statements below, every expression of the form
	\[
	\frac{\|\nabla f(X^t)\|_F^2}
	{L_0+2^{2/3}L_{2/3}\|\nabla f(X^t)\|_F^{2/3}}
	\]
	is interpreted as
	\[
	\Phi\bigl(\|\nabla f(X^t)\|_F\bigr).
	\]
	
	Moreover, if an algorithm stops at some iteration \(\tau\) because
	\[
	\|\nabla f(X^\tau)\|_F=0,
	\]
	then for any horizon \(T>\tau\) we extend the generated sequence by setting
	\[
	X^t:=X^\tau
	\qquad
	\forall\,t\ge \tau.
	\]
	Consequently, minima over \(0\le t<T\) are taken over this extended sequence, and the corresponding ratio is equal to zero for all \(t\ge \tau\).
\end{remark}

\begin{lemma}[Integral estimate of the function increment]
	\label{lem:function_increment}
	Let \(f\colon\mathbb{R}^{n\times r}\to\mathbb{R}\) be twice continuously differentiable and satisfy the
	\((2/3,L_0,L_{2/3})\)-smoothness condition \eqref{def_gensmooth} with
	\[
	\ell(g)=L_0+L_{2/3}g^{2/3},
	\qquad
	L_0\ge0,\; L_{2/3}\ge0,\; L_0+L_{2/3}>0.
	\]
	Then for any \(X,Y\in\mathbb{R}^{n\times r}\) it holds that
	\[
	f(Y)
	\le
	f(X)
	+
	\inprod{\nabla f(X)}{Y-X}
	+
	\int_0^{\norm{Y-X}_F}
	q^{-1}\bigl(\tau;\norm{\nabla f(X)}_F\bigr)\,d\tau,
	\]
	where \(q(s;a)=\int_0^s \frac{dv}{\ell(a+v)}\).
\end{lemma}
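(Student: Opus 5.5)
The plan is to reduce the function-increment estimate to the gradient-difference estimate of Lemma~\ref{lem:grad-diff}, via the fundamental theorem of calculus along the segment from $X$ to $Y$. The case $X=Y$ is trivial, so assume $X\neq Y$. Set $D=\|Y-X\|_F$, $U=(Y-X)/D$, $h(\tau)=X+\tau U$ for $\tau\in[0,D]$, and $a=\|\nabla f(X)\|_F$.

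Since $f\in C^2$ (in particular $C^1$), the map $\tau\mapsto f(h(\tau))$ is continuously differentiable with derivative $\langle\nabla f(h(\tau)),U\rangle$. Hence
\[
f(Y)-f(X)=\int_0^D\langle\nabla f(h(\tau)),U\rangle\,d\tau
=\langle\nabla f(X),Y-X\rangle+\int_0^D\langle\nabla f(h(\tau))-\nabla f(X),U\rangle\,d\tau,
\]
where the linear term follows from $\int_0^D\langle\nabla f(X),U\rangle d\tau=\langle\nabla f(X),DU\rangle$.

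Next, bound the integrand by Cauchy--Schwarz together with $\|U\|_F=1$. This gives $\langle\nabla f(h(\tau))-\nabla f(X),U\rangle\le\|\nabla f(h(\tau))-\nabla f(X)\|_F$. Apply Lemma~\ref{lem:grad-diff} to the pair $X$ and $h(\tau)$, noting that $\|h(\tau)-X\|_F=\tau$. This yields the bound $q^{-1}(\tau;a)$, and integrating over $\tau\in[0,D]$ gives exactly the claimed inequality.

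The one point needing care, and the likely main obstacle, is that the right-hand side integral is well defined. The function $\tau\mapsto q^{-1}(\tau;a)$ is defined on all of $[0,\infty)$ because $q_{\max}(a)=\infty$, as established before Remark~\ref{def:zero_gradient_convention}. It is continuous and nondecreasing, being the inverse of a strictly increasing continuous bijection of $[0,\infty)$ onto itself. It is therefore Riemann integrable on $[0,D]$, and the comparison of integrands is legitimate.

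No local Lipschitz continuity of $\ell$ is used. The case $L_0=0$ (where $q'(0;0)$ is infinite) needs no separate treatment, since all that is required is continuity and monotonicity of $q^{-1}$, and Lemma~\ref{lem:grad-diff} already covers this case.
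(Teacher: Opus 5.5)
Your proposal is correct and follows essentially the same route as the paper: the fundamental theorem of calculus along the segment, then Cauchy--Schwarz, then Lemma~\ref{lem:grad-diff} applied to $X$ and the point on the segment. The only difference is cosmetic: you parametrize by arc length $\tau\in[0,\|Y-X\|_F]$ directly, whereas the paper uses $t\in[0,1]$ and substitutes $\tau=t\|Y-X\|_F$ at the end; your remark on the continuity and integrability of $q^{-1}(\cdot;a)$ is a useful detail that the paper leaves implicit.
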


\begin{proof}
	If \(X=Y\), the statement is trivial. Assume \(X\neq Y\) and denote
	\[
	a=\norm{\nabla f(X)}_F,\qquad
	R=\norm{Y-X}_F>0.
	\]
	Let us consider the line segment
	\[
	h(t)=X+t(Y-X),\qquad t\in[0,1].
	\]
	By the fundamental theorem of calculus,
	\[
	f(Y)-f(X)
	=
	\int_0^1
	\inprod{\nabla f(h(t))}{Y-X}\,dt.
	\]
	We rewrite the integrand as
	\[
	\inprod{\nabla f(h(t))}{Y-X}
	=
	\inprod{\nabla f(X)}{Y-X}
	+
	\inprod{\nabla f(h(t))-\nabla f(X)}{Y-X}.
	\]
	By the Cauchy--Schwarz inequality,
	\[
	\inprod{\nabla f(h(t))-\nabla f(X)}{Y-X}
	\le
	\norm{\nabla f(h(t))-\nabla f(X)}_F\cdot \norm{Y-X}_F.
	\]
	From Lemma~\ref{lem:grad-diff} (the gradient difference estimate) we have
	\[
	\norm{\nabla f(h(t))-\nabla f(X)}_F
	\le
	q^{-1}\bigl(t\norm{Y-X}_F;\, a\bigr).
	\]
	Therefore
	\[
	\inprod{\nabla f(h(t))}{Y-X}
	\le
	\inprod{\nabla f(X)}{Y-X}
	+
	q^{-1}\bigl(tR;\, a\bigr)\cdot R.
	\]
	Integrating over \(t\in[0,1]\), we obtain
	\[
	f(Y)-f(X)
	\le
	\inprod{\nabla f(X)}{Y-X}
	+
	R\int_0^1 q^{-1}(tR;\,a)\,dt.
	\]
	The change of variable \(\tau=tR\) gives
	\[
	R\int_0^1 q^{-1}(tR;\,a)\,dt
	=
	\int_0^R q^{-1}(\tau;\,a)\,d\tau.
	\]
	Thus
	\[
	f(Y)
	\le
	f(X)
	+
	\inprod{\nabla f(X)}{Y-X}
	+
	\int_0^{\norm{Y-X}_F} q^{-1}\bigl(\tau;\norm{\nabla f(X)}_F\bigr)\,d\tau,
	\]
	which is exactly the claimed inequality.
\end{proof}

\section{Practical Implementation of Gradient Descent Method}

In this section we describe a practical implementation of the gradient descent method for minimizing the function \(f\) under the nonnegativity constraint \(X\ge 0\). We rely on the basic adaptive projected gradient descent (Algorithm~\ref{alg:adaptive_gd_tyurin}), in which the step size is chosen in accordance with the integral estimate of the function increment and the generalized smoothness modulus
\[
\ell(g)=L_0+L_{2/3}g^{2/3}.
\]
The method uses the \(q\)-function to determine a theoretically justified step and applies the projection onto the nonnegative orthant. We then discuss modifications that allow the step size to be increased without losing the theoretical convergence guarantees.

\begin{algorithm}[H]
	\caption{Basic adaptive projected GD}
	\label{alg:adaptive_gd_tyurin}
	\Input{\(X^0 \ge 0\), \(L_0, L_{2/3}\), \(T\)}
	\Output{\(X^T\)}
	\For{$t=0,1,\dots,T-1$}{
		$g_t \gets \nabla f(X^t)$\;
		$a_t \gets \norm{g_t}_F$\;
		\If{$a_t = 0$}{
			\Return $X^t$\;
		}
		$\eta_t \gets \frac{1}{a_t} q(a_t;\, a_t)$\;
		$X^{t+1} \gets \Proj\bigl(X^t - \eta_t\, g_t\bigr)$\;
	}
\end{algorithm}

\subsection{Motivation}
The theoretical step $\eta_t^{\rm theory} = \frac{1}{a_t} q(a_t;\, a_t)$, defined in Algorithm~\ref{alg:adaptive_gd_tyurin},
minimizes the upper bound from the integral lemma on the function increment.
However, this upper bound may be substantially inexact:
in practice the real descent of the function along the gradient direction can often be performed
with a significantly larger step than the theory guarantees.
In this section we propose a modification which
allows one to use an increased step while preserving the theoretical
convergence guarantees.

\subsection{Description of the Modifications}
The proposed method contains five modifications of the basic adaptive gradient descent (Algorithm~\ref{alg:adaptive_gd_tyurin}):

\begin{enumerate}
	\item \textbf{Increase of the initial step.}
	Instead of the theoretical step $\frac{1}{a_t} q(a_t;\, a_t)$ the
	increased step
	\[
	\eta_t^{(0)} = \alpha_t \cdot \frac{1}{a_t} q(a_t;\, a_t),
	\qquad \alpha_t \ge 1,
	\]
	is used, where $\alpha_t$ is the enlargement factor, adapted
	at every iteration.
	
	\item \textbf{Armijo condition (sufficient descent).}
	A trial step $\eta$ is accepted only if
	\[
	f\bigl(X - \eta\, \nabla f(X)\bigr)
	\;\le\;
	f(X) - c\,\eta\, \|\nabla f(X)\|_F^2,
	\]
	where $c \in (0, 1/2)$ is the sufficient descent parameter.
	This condition guarantees that the increased step
	indeed yields a decrease of the function.
	
	\item \textbf{Backtracking.}
	If the Armijo condition is not satisfied, the step is successively
	halved:
	\[
	\eta \leftarrow \eta / 2
	\]
	until the condition is satisfied or the iteration limit is exhausted.
	
	\item \textbf{Adaptation of the factor $\alpha_t$.}
	After every iteration the factor is updated according to the rule:
	\[
	\alpha_{t+1} =
	\begin{cases}
		\min\bigl\{\alpha_{\max},\; \alpha_t \cdot \rho_+\bigr\},
		& \text{if backtracking was not needed},\\[6pt]
		\max\bigl\{\alpha_{\min},\; \alpha_t \cdot \rho_-\bigr\},
		& \text{if backtracking was needed},
	\end{cases}
	\]
	where $\rho_+ > 1$ (for example, $\rho_+ = 1{.}1$),
	$\rho_- < 1$ (for example, $\rho_- = 0{.}8$).
	The logic: if the increased step passed without divisions ---
	the theoretical step is too conservative, one can increase $\alpha$;
	if divisions were needed --- the step was too large,
	one should decrease $\alpha$.
	
	\item \textbf{Range restriction.}
	The factor is bounded:
	\[
	\alpha_t \in [\alpha_{\min},\, \alpha_{\max}]
	= [1,\, 5].
	\]
	The lower bound $\alpha_{\min} = 1$ guarantees that the step
	never becomes smaller than the theoretical one.
	The upper bound $\alpha_{\max} = 5$ prevents
	excessively aggressive steps.
\end{enumerate}

\begin{remark}[On the number of backtracking iterations]
	Since $\eta$ is halved at every iteration
	of the backtracking and cannot become smaller than $\eta_t^{\rm theory}$
	(for which the descent is guaranteed theoretically),
	the number of divisions does not exceed
	\[
	\left\lceil \log_2 \alpha_t \right\rceil
	\le \left\lceil \log_2 \alpha_{\max} \right\rceil
	= \lceil \log_2 5 \rceil = 3.
	\]
	Thus, at every iteration at most
	3--4 additional evaluations of the function $f$ are required.
\end{remark}

\begin{algorithm}[H]
	\caption{Adaptive GD with backtracking for the unconstrained problem}
	\label{alg:adaptive_gd_backtrack}
	\Input{\(X^0\), \(L_0, L_{2/3}\), \(\alpha_0 \ge 1\),
		\(\alpha_{\min} = 1\), \(\alpha_{\max} = 5\),
		\(\rho_+ = 1{.}1\), \(\rho_- = 0{.}8\),
		\(c \in (0, 1/2)\), \(T\)}
	\Output{\(X^T\)}
	$\alpha \gets \alpha_0$\;
	\For{$t=0,1,\dots,T-1$}{
		$g_t \gets \nabla f(X^t)$\;
		$a_t \gets \norm{g_t}_F$\;
		\If{$a_t = 0$}{
			\Return $X^t$\;
		}
		$\eta_t^{\rm theory} \gets \frac{1}{a_t} q(a_t;\, a_t)$\;
		$\eta \gets \alpha \cdot \eta_t^{\rm theory}$\;
		$f_t \gets f(X^t)$\;
		${\rm backtrack} \gets {\bf false}$\;
		\While{$f(X^t - \eta\, g_t) > f_t - c\,\eta\, a_t^2$
			\textnormal{and}
			$\eta/2 \ge \eta_t^{\rm theory}$}{
			$\eta \gets \eta / 2$\;
			${\rm backtrack} \gets {\bf true}$\;
		}
		\If{$f(X^t - \eta\, g_t) > f_t - c\,\eta\, a_t^2$}{
			$\eta \gets \eta_t^{\rm theory}$\;
			${\rm backtrack} \gets {\bf true}$\;
		}
		\If{${\rm backtrack}$}{
			$\alpha \gets \max(\alpha_{\min},\; \alpha \cdot \rho_-)$\;
		}
		\Else{
			$\alpha \gets \min(\alpha_{\max},\; \alpha \cdot \rho_+)$\;
		}
		$X^{t+1} \gets X^t - \eta\, g_t$\;
	}
\end{algorithm}

\begin{lemma}[Descent along a direction]
	\label{lem:descent_direction}
	Let
	\[
	g=\nabla f(X),
	\qquad
	a=\norm{g},
	\]
	\(d\) be a unit direction, \(\norm{d}=1\), and
	\[
	b=\inprod{g}{d}\ge0.
	\]
	If
	\[
	\eta=q(b;a),
	\]
	then
	\[
	f(X-\eta d)
	\le
	f(X)
	-
	\int_0^b
	\frac{b-s}{\ell(a+s)}\,ds.
	\]
\end{lemma}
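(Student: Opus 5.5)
We will apply Lemma~\ref{lem:function_increment} with \(Y=X-\eta d\). Since \(\norm{d}=1\), we have \(\norm{Y-X}_F=\eta\) and \(\inprod{\nabla f(X)}{Y-X}=-\eta b\). This gives
\[
f(X-\eta d)\le f(X)-\eta b+\int_0^{\eta} q^{-1}(\tau;a)\,d\tau .
\]
The remaining task is to rewrite the right-hand side in terms of an integral over gradient-change values \(s\in[0,b]\). To do this, substitute \(\tau=q(s;a)\) in the last integral. By the discussion preceding Lemma~\ref{lem:function_increment}, \(q(\cdot;a)\) is continuous, strictly increasing, \(q(0;a)=0\), and \(dq=ds/\ell(a+s)\). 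The choice \(\eta=q(b;a)\) makes the upper limit correspond to \(s=b\). Hence
\[
\int_0^{\eta} q^{-1}(\tau;a)\,d\tau=\int_0^b \frac{s}{\ell(a+s)}\,ds,
\qquad
\eta b = q(b;a)\,b=\int_0^b\frac{b}{\ell(a+s)}\,ds .
\]
Subtracting the two expressions gives exactly \(-\int_0^b \frac{b-s}{\ell(a+s)}\,ds\), which is the claim. The case \(b=0\) is trivial: then \(\eta=0\) and both sides equal \(f(X)\).

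The only delicate point is justifying the change of variables when \(a=0\) and \(L_0=0\). In that case \(1/\ell(a+s)=1/(L_{2/3}s^{2/3})\) is unbounded near \(s=0\), so \(q(\cdot;0)\) is not \(C^1\) at the origin. We will handle this in one of two ways. The first option is to perform the substitution on \([\delta,b]\), where \(q\) is \(C^1\) with positive derivative, and then let \(\delta\to0^+\), using integrability of \(s^{1/3}\) and of \(s^{-2/3}\) near zero together with the continuity of \(q\) and \(q^{-1}\). The second option avoids the substitution altogether: apply Fubini (the layer-cake identity for increasing \(q^{-1}\)), writing \(\int_0^\eta q^{-1}(\tau;a)\,d\tau=\int_0^b(\eta-q(s;a))\,ds\). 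Then \(\eta-q(s;a)=\int_s^b dv/\ell(a+v)\), and one more application of Fubini gives \(\int_0^b v/\ell(a+v)\,dv\). This route needs only monotonicity and integrability of \(1/\ell\), consistent with the relaxed assumptions emphasized earlier. We will present the second route as the main argument.
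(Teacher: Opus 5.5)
Your argument is correct and follows the paper's proof. You apply Lemma~\ref{lem:function_increment} with $Y=X-\eta d$ and then rewrite $\int_0^{\eta}q^{-1}(\tau;a)\,d\tau$ as $\int_0^b \frac{s}{\ell(a+s)}\,ds$; the paper does this by the direct substitution $\tau=q(s;a)$, while your Fubini/layer-cake identity is an equally valid alternative. The delicate case you single out cannot occur, since Cauchy--Schwarz gives $b\le\norm{g}\,\norm{d}=a$, so $a=0$ forces $b=\eta=0$. For $a>0$, $1/\ell(a+s)$ is continuous and bounded by $1/\ell(a)$ on $[0,b]$, so $q(\cdot;a)$ is $C^1$ with positive derivative there, the paper's direct substitution is already rigorous, and the Fubini detour is optional.
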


\begin{proof}
	Set
	\[
	Y=X-\eta d.
	\]
	Then
	\[
	\norm{Y-X}=\eta,
	\qquad
	\inprod{g}{Y-X}=-\eta b.
	\]
	By the integral lemma,
	\[
	f(Y)
	\le
	f(X)-\eta b
	+
	\int_0^\eta q^{-1}(\tau;a)\,d\tau.
	\]
	Since \(\eta=q(b;a)\), let us make the change of variable
	\[
	\tau=q(s;a),
	\qquad
	s\in[0,b].
	\]
	Then
	\[
	d\tau=\frac{ds}{\ell(a+s)},
	\qquad
	q^{-1}(\tau;a)=s.
	\]
	Consequently,
	\[
	\int_0^\eta q^{-1}(\tau;a)\,d\tau
	=
	\int_0^b\frac{s}{\ell(a+s)}\,ds.
	\]
	Moreover,
	\[
	\eta b
	=
	b\int_0^b\frac{ds}{\ell(a+s)}
	=
	\int_0^b\frac{b}{\ell(a+s)}\,ds.
	\]
	Therefore
	\[
	-\eta b+\int_0^\eta q^{-1}(\tau;a)\,d\tau
	=
	-
	\int_0^b
	\frac{b-s}{\ell(a+s)}\,ds.
	\]
	The lemma is proved.
\end{proof}

\begin{lemma}[Descent guarantee]
	\label{lem:armijo_descent}
	Suppose that at iteration $t$ of Algorithm~\ref{alg:adaptive_gd_backtrack}
	the step $\eta_t$ was performed. Then
	\[
	f(X^{t+1}) \le f(X^t) - c\,\eta_t\, \norm{\nabla f(X^t)}_F^2.
	\]
\end{lemma}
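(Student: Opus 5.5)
Since the iterate update is \(X^{t+1}=X^t-\eta_t g_t\), with \(\eta_t\) the step left when the backtracking loop of Algorithm~\ref{alg:adaptive_gd_backtrack} terminates, the proof reduces to two cases according to how the final step was fixed. \emph{Case A:} the final step passed the Armijo test. This happens either when the while loop exits because the Armijo condition holds, or when the subsequent check succeeds. Then \(f(X^t-\eta_t g_t)\le f(X^t)-c\,\eta_t a_t^2\) holds by construction, and nothing remains to prove. \emph{Case B:} the final test failed and the algorithm reset \(\eta_t=\eta_t^{\rm theory}=q(a_t;a_t)/a_t\). In this case the inequality must be derived from the smoothness theory, namely Lemma~\ref{lem:descent_direction}.

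For Case B, set \(d=g_t/a_t\), which is a unit vector with \(b=\inprod{g_t}{d}=a_t\). The step \(\eta_t g_t=q(a_t;a_t)\,d\), so Lemma~\ref{lem:descent_direction} with \(\eta=q(b;a)\), \(a=b=a_t\) applies and gives
\[
f(X^{t+1})\le f(X^t)-\int_0^{a_t}\frac{a_t-s}{\ell(a_t+s)}\,ds .
\]
We then need to show that this integral is at least \(c\,\eta_t a_t^2=c\,a_t\,q(a_t;a_t)=\int_0^{a_t}\frac{c\,a_t}{\ell(a_t+s)}\,ds\). Equivalently, we need
\[
\int_0^{a_t}\frac{(1-c)a_t-s}{\ell(a_t+s)}\,ds\ge 0 .
\]

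This integral inequality is the main step. The plan is to use that \(w(s)=1/\ell(a_t+s)\) is positive and nonincreasing on \([0,a_t]\). The function \(\phi(s)=(1-c)a_t-s\) is decreasing, and it changes sign at \(s_0=(1-c)a_t\), which is at least \(a_t/2\) because \(c<1/2\). Its unweighted integral is \(\int_0^{a_t}\phi\,ds=a_t^2(1/2-c)\ge0\). The positive part of \(\phi\) sits on \([0,s_0]\), where \(w\ge w(s_0)\), and the negative part sits on \([s_0,a_t]\), where \(w\le w(s_0)\). Hence
\[
\int_0^{a_t}\phi w\,ds\ge w(s_0)\int_0^{a_t}\phi\,ds\ge0 .
\]
This is a Chebyshev-type rearrangement argument.

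Finally, we check the degenerate cases. When \(a_t=0\) the algorithm stops, and the convention of Remark~\ref{def:zero_gradient_convention} makes the inequality trivial. The monotonicity of \(\ell\) used above holds for all admissible \(L_0,L_{2/3}\), including \(L_0=0\), since \(a_t+s>0\) there. With these checks, the two cases together prove the claim.
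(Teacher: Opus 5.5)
Your proof is correct and follows the paper's argument. You use the same case split (Armijo holds by construction, or the reset to $\eta_t^{\rm theory}$), the same application of Lemma~\ref{lem:descent_direction} with $d=g_t/a_t$, and a Chebyshev-type inequality for the nonincreasing weight $1/\ell(a_t+s)$. The only difference is that the paper cites Chebyshev's integral inequality to obtain the bound $\frac12\eta_t^{\rm theory}a_t^2$ and then uses $c<1/2$, whereas you prove the needed weighted inequality directly by the sign-change argument at $s_0=(1-c)a_t$.
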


\begin{proof}
	Let \(a_t=\|\nabla f(X^t)\|_F\).
	If the Armijo condition is satisfied for the current trial step,
	then the required inequality holds by construction.
	
	Otherwise the algorithm sets
	\[
	\eta_t = \eta_t^{\rm theory}
	= \frac{1}{a_t} q(a_t; a_t).
	\]
	We now show that this theoretical step satisfies the Armijo condition.
	
	Let
	\[
	d_t = \frac{g_t}{a_t},
	\qquad
	s_t = q(a_t; a_t).
	\]
	Then \(\|d_t\|_F=1\), and the step multiplier is
	\[
	\eta_t^{\rm theory}
	=
	\frac{s_t}{a_t}
	=
	\frac{q(a_t; a_t)}{a_t}.
	\]
	Hence the actual step can be written as
	\[
	X^t - \eta_t^{\rm theory} g_t
	=
	X^t - \frac{s_t}{a_t} g_t
	=
	X^t - s_t d_t.
	\]
	
	By Lemma~\ref{lem:descent_direction},
	for the step length \(s_t = q(a_t;a_t)\) in the direction \(d_t\)
	we have
	\[
	f\bigl(X^t - s_t d_t\bigr)
	\le
	f(X^t)
	-
	\int_0^{a_t}
	\frac{a_t-u}{\ell(a_t+u)}\,du.
	\]

	Let us introduce the two functions
	\[
	\phi(u) = a_t - u,
	\qquad
	h(u) = \frac{1}{\ell(a_t+u)},
	\qquad
	u \in [0, a_t].
	\]
	Both functions are nonincreasing on this interval: the function \(\phi\) decreases trivially (\(\phi'(u) = -1 < 0\)), and the function \(h\) decreases because \(\ell\) is nondecreasing by assumption, and therefore its reciprocal \(1/\ell\) is nonincreasing.
	
	Recall the classical Chebyshev integral inequality: if two integrable functions \(\phi\) and \(h\) are either both nonincreasing or both nondecreasing on \([0, L]\), then their synchronous monotonicity implies
	\[
	\int_0^L \phi(u)\,h(u)\,du
	\;\ge\;
	\frac{1}{L}
	\left(\int_0^L \phi(u)\,du\right)
	\left(\int_0^L h(u)\,du\right).
	\]
	Applying this inequality with \(L = a_t\) to our functions \(\phi\) and \(h\), we obtain
	\[
	\int_0^{a_t}
	\frac{a_t-u}{\ell(a_t+u)}\,du
	\;\ge\;
	\frac{1}{a_t}
	\left(\int_0^{a_t}(a_t-u)\,du\right)
	\left(\int_0^{a_t}\frac{du}{\ell(a_t+u)}\right).
	\]
	
	The first integral equals \(a_t^2/2\), and the second equals
	\(s_t = q(a_t;a_t)\). Hence
	\[
	\int_0^{a_t}
	\frac{a_t-u}{\ell(a_t+u)}\,du
	\ge
	\frac{a_t}{2}\, q(a_t;a_t)
	=
	\frac{a_t s_t}{2}.
	\]
	
	Since
	\[
	\eta_t^{\rm theory} a_t^2
	=
	\frac{s_t}{a_t}\, a_t^2
	=
	s_t a_t,
	\]
	we obtain
	\[
	\int_0^{a_t}
	\frac{a_t-u}{\ell(a_t+u)}\,du
	\ge
	\frac{1}{2}\,\eta_t^{\rm theory} a_t^2.
	\]
	Because \(c < 1/2\), it follows that
	\[
	\int_0^{a_t}
	\frac{a_t-u}{\ell(a_t+u)}\,du
	>
	c\,\eta_t^{\rm theory} a_t^2.
	\]
	Thus the theoretical step satisfies the Armijo condition, and the lemma
	is proved.
\end{proof}

\begin{theorem}[Convergence of the method with backtracking]
	\label{teo:backtrack_convergence}
	Let \(f\) be bounded from below:
	\[
	f(X)\ge f^*,
	\]
	and satisfy the \((2/3,L_0,L_{2/3})\)-smoothness condition, where
	\[
	L_0\ge0,\qquad L_{2/3}\ge0,\qquad L_0+L_{2/3}>0.
	\]
	Then for the sequence generated by
	Algorithm~\ref{alg:adaptive_gd_backtrack},
	with the convention of Definition~\ref{def:zero_gradient_convention},
	for any \(T\ge 1\) it holds that
	\[
	c\sum_{t=0}^{T-1}
	\frac{\|\nabla f(X^t)\|_F^2}
	{L_0 + 2^{2/3}L_{2/3}\|\nabla f(X^t)\|_F^{2/3}}
	\le
	\Delta.
	\]
	Consequently,
	\[
	\min_{0 \le t < T}
	\frac{\|\nabla f(X^t)\|_F^2}
	{L_0 + 2^{2/3}L_{2/3}\|\nabla f(X^t)\|_F^{2/3}}
	\le
	\frac{\Delta}{c\,T},
	\]
	where
	\[
	\Delta = f(X^0) - f^*.
	\]
\end{theorem}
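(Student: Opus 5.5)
The plan is to combine the per-step sufficient decrease of Lemma~\ref{lem:armijo_descent} with a lower bound on the accepted step size, and then telescope.

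\textbf{Step 1: per-step decrease.} By Lemma~\ref{lem:armijo_descent}, every performed step satisfies
\[
f(X^{t+1})\le f(X^t)-c\,\eta_t a_t^2,\qquad a_t=\|\nabla f(X^t)\|_F .
\]

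\textbf{Step 2: the accepted step is never below the theoretical one.} The while-loop halves $\eta$ only while $\eta/2\ge\eta_t^{\rm theory}$. If the Armijo test still fails after the loop, $\eta$ is reset to $\eta_t^{\rm theory}$. Moreover $\alpha\ge\alpha_{\min}=1$ throughout. Hence $\eta_t\ge\eta_t^{\rm theory}=q(a_t;a_t)/a_t$, and therefore
\[
\eta_t a_t^2\ge a_t\,q(a_t;a_t).
\]

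\textbf{Step 3: lower bound on $q$.} Since $\ell$ is nondecreasing, $1/\ell(a_t+v)\ge 1/\ell(2a_t)$ for $v\in[0,a_t]$. This gives
\[
q(a_t;a_t)\ge \frac{a_t}{\ell(2a_t)}=\frac{a_t}{L_0+2^{2/3}L_{2/3}a_t^{2/3}}.
\]
Combining with Step~2,
\[
f(X^t)-f(X^{t+1})\ge c\,\Phi(a_t),
\]
where $\Phi$ is the function from Remark~\ref{def:zero_gradient_convention}. If $a_t=0$, the algorithm stops. Under the convention of Remark~\ref{def:zero_gradient_convention} the sequence is then frozen, so both sides equal zero and the inequality still holds. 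In particular the case $L_0=0$ with $a_t=0$ causes no division by zero.

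\textbf{Step 4: telescoping.} Summing over $t=0,\dots,T-1$ gives
\[
c\sum_{t=0}^{T-1}\Phi(a_t)\le f(X^0)-f(X^T)\le f(X^0)-f^*=\Delta .
\]
Bounding the minimum by the average then yields the second claim:
\[
\min_{0\le t<T}\Phi(a_t)\le \frac{\Delta}{cT}.
\]

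\textbf{Main obstacle.} The delicate step is Step~2. One has to check carefully that the backtracking logic never produces a step below $\eta_t^{\rm theory}$, and that the fallback step is admissible. The latter is exactly what Lemma~\ref{lem:armijo_descent} guarantees. Note also that the algorithm is unconstrained, so no projection enters the argument. Everything else is a direct monotonicity estimate followed by telescoping.
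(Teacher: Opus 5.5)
Your proposal is correct and follows essentially the same route as the paper: the Armijo descent guarantee of Lemma~\ref{lem:armijo_descent}, the bound $\eta_t\ge\eta_t^{\rm theory}=q(a_t;a_t)/a_t\ge 1/\ell(2a_t)$ from the monotonicity of $\ell$, and telescoping against $f^*$. Your Step~2 is slightly more careful than the paper's, which appeals only to $\alpha_t\ge 1$, because you also check the while-loop guard $\eta/2\ge\eta_t^{\rm theory}$ and the fallback reset to $\eta_t^{\rm theory}$.
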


\begin{proof}
	If for some \(\tau<T\) we have
	\[
	\|\nabla f(X^\tau)\|_F=0,
	\]
	then, by Definition~\ref{def:zero_gradient_convention},
	the algorithm stops and the sequence is extended by stationarity.
	All summands with \(t\ge\tau\) are zero.
	The finite sum over the iterations before \(\tau\) is bounded by the same
	telescoping argument applied up to the stopping time, and the minimum is zero.
	Hence it suffices to give the proof for the case where
	\[
	\|\nabla f(X^t)\|_F>0
	\qquad
	\forall\,t=0,\dots,T-1.
	\]
	
	Therefore, in the proof below we may assume that
	\[
	a_t:=\|\nabla f(X^t)\|_F>0
	\qquad
	\forall\,t=0,\dots,T-1.
	\]
	
	By Lemma~\ref{lem:armijo_descent},
	\[
	f(X^{t+1}) \le f(X^t) - c\,\eta_t\, a_t^2,
	\]
	where $a_t = \|\nabla f(X^t)\|_F$ and $\eta_t$ is the step multiplier chosen by the algorithm.
	
	Since $\alpha_t \ge \alpha_{\min} = 1$, the algorithm never selects a step smaller than the theoretical one:
	\[
	\eta_t \ge \eta_t^{\mathrm{theory}}
	= \frac{1}{a_t}\, q(a_t; a_t).
	\]
	
	We now lower-bound this quantity. Since $\ell$ is nondecreasing,
	for all $s \in [0, a_t]$ we have
	\[
	\ell(a_t + s) \le \ell(2a_t).
	\]
	Hence
	\[
	q(a_t; a_t)
	=
	\int_0^{a_t} \frac{ds}{\ell(a_t + s)}
	\ge
	\int_0^{a_t} \frac{ds}{\ell(2a_t)}
	=
	\frac{a_t}{\ell(2a_t)}.
	\]
	Dividing by $a_t > 0$, we obtain
	\[
	\frac{1}{a_t} q(a_t; a_t)
	\ge
	\frac{1}{\ell(2a_t)}.
	\]
	Therefore
	\[
	\eta_t \ge \frac{1}{\ell(2a_t)}.
	\]
	
	Substituting this into the descent inequality gives
	\[
	f(X^{t+1})
	\le f(X^t) - \frac{c\, a_t^2}{\ell(2a_t)}.
	\]
	
	Summing over $t = 0, \dots, T-1$:
	\[
	c \sum_{t=0}^{T-1} \frac{a_t^2}{\ell(2a_t)}
	\le f(X^0) - f(X^T) \le \Delta.
	\]
	The minimum does not exceed the average:
	\[
	\min_{0 \le t < T} \frac{a_t^2}{\ell(2a_t)}
	\le \frac{\Delta}{c\,T}.
	\]
	Since $\ell(2a_t) = L_0 + L_{2/3}(2a_t)^{2/3} = L_0 + 2^{2/3}L_{2/3} a_t^{2/3}$,
	this is exactly the claimed inequality.
\end{proof}

\begin{lemma}[Two-stage telescoping split]
	\label{lem:two_regimes_rates}
	\label{lem:two_stage_split}
	Let \(L_0,L_{2/3}\ge 0\), \(L_0+L_{2/3}>0\), let \(C\ge 0\), and let
	\(a_0,\dots,a_{T-1}\ge0\) satisfy
	\begin{equation}
		\label{eq:two_stage_basic_sum}
		\sum_{t=0}^{T-1}
		\frac{a_t^2}
		{L_0+2^{2/3}L_{2/3}a_t^{2/3}}
		\le C.
	\end{equation}
	Define the two sets of iterations
	\[
	\mathcal S_T
	:=
	\left\{
	t:
	2^{2/3}L_{2/3}a_t^{2/3}\le L_0
	\right\},
	\]
	\[
	\mathcal L_T
	:=
	\left\{
	t:
	2^{2/3}L_{2/3}a_t^{2/3}> L_0
	\right\}.
	\]
	Then
	\begin{equation}
		\label{eq:two_stage_small_sum}
		\sum_{t\in\mathcal S_T} a_t^2
		\le
		2L_0 C,
	\end{equation}
	and
	\begin{equation}
		\label{eq:two_stage_large_sum}
		\sum_{t\in\mathcal L_T} a_t^{4/3}
		\le
		2^{5/3}L_{2/3} C.
	\end{equation}
	Consequently, if \(N_S:=|\mathcal S_T|\) and \(N_L:=|\mathcal L_T|\), then,
	whenever \(N_S>0\),
	\begin{equation}
		\label{eq:two_stage_small_min}
		\min_{t\in\mathcal S_T} a_t
		\le
		\sqrt{\frac{2L_0 C}{N_S}},
	\end{equation}
	and, whenever \(N_L>0\),
	\begin{equation}
		\label{eq:two_stage_large_min}
		\min_{t\in\mathcal L_T} a_t
		\le
		\left(
		\frac{2^{5/3}L_{2/3} C}{N_L}
		\right)^{3/4}.
	\end{equation}
	If one of the sets is empty, the corresponding bound is simply ignored.
	If \(L_0=0\) or \(L_{2/3}=0\), the corresponding term is understood by
	continuity or omitted.
\end{lemma}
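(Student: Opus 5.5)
The plan is to bound each summand of \eqref{eq:two_stage_basic_sum} from below on each of the two sets separately, using the fact that on each set one term in the denominator dominates the other. Throughout write $D_t := L_0+2^{2/3}L_{2/3}a_t^{2/3}$. Terms with $a_t=0$ contribute nothing to either side, by the convention of Definition~\ref{def:zero_gradient_convention}, and all summands are nonnegative. Hence the full sum bounds each partial sum over $\mathcal S_T$ and over $\mathcal L_T$ by $C$.

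\textbf{Small-gradient set.} For $t\in\mathcal S_T$ we have $2^{2/3}L_{2/3}a_t^{2/3}\le L_0$, so $D_t\le 2L_0$. Therefore $a_t^2\le 2L_0\, a_t^2/D_t$. Summing over $\mathcal S_T$ and using the partial-sum bound gives \eqref{eq:two_stage_small_sum}.

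\textbf{Large-gradient set.} For $t\in\mathcal L_T$ the reverse inequality gives $D_t< 2\cdot 2^{2/3}L_{2/3}a_t^{2/3}=2^{5/3}L_{2/3}a_t^{2/3}$. Note that $a_t>0$ here, since the defining strict inequality forces it. Then
\[
a_t^{4/3}=\frac{a_t^2}{a_t^{2/3}}\le 2^{5/3}L_{2/3}\,\frac{a_t^2}{D_t},
\]
and summing over $\mathcal L_T$ gives \eqref{eq:two_stage_large_sum}.

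\textbf{The min bounds.} These follow from "minimum $\le$ average". On $\mathcal S_T$ we get $N_S\min a_t^2\le\sum_{\mathcal S_T}a_t^2\le 2L_0C$; taking a square root gives \eqref{eq:two_stage_small_min}. On $\mathcal L_T$ we get $N_L\min a_t^{4/3}\le 2^{5/3}L_{2/3}C$; raising to the power $3/4$, which is monotone, gives \eqref{eq:two_stage_large_min}.

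\textbf{Degenerate cases.} The only real care needed is here.
\begin{itemize}
\item If $L_0=0$, then $\mathcal S_T$ contains only indices with $a_t=0$, and the bound $0$ holds trivially.
\item If $L_{2/3}=0$, then $\mathcal L_T$ is empty, because $0>L_0$ is impossible.
\item If both $L_0$ and $L_{2/3}$ are positive, the denominators are positive, except at $a_t=0$ with $L_0=0$, which the convention already handles.
\end{itemize}
I expect no genuine obstacle. The main point to check is the direction of the inequalities when dividing by $a_t^{2/3}$, which is valid only because $a_t>0$ on $\mathcal L_T$.
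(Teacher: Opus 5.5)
Your proposal is correct and follows the paper's proof essentially line by line: bound the denominator by $2L_0$ on $\mathcal S_T$ and by $2^{5/3}L_{2/3}a_t^{2/3}$ on $\mathcal L_T$, compare each partial sum with the full sum $\le C$, and finish with ``minimum $\le$ average''. Your explicit treatment of the degenerate cases is slightly cleaner than the paper's appeal to a limiting argument: when $L_0=0$ the set $\mathcal S_T$ contains only indices with $a_t=0$, and when $L_{2/3}=0$ the set $\mathcal L_T$ is empty.
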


\begin{proof}
	Assume first that \(L_0>0\) and \(L_{2/3}>0\).
	
	For \(t\in\mathcal S_T\) we have
	\[
	2^{2/3}L_{2/3}a_t^{2/3}\le L_0.
	\]
	Hence
	\[
	L_0+2^{2/3}L_{2/3}a_t^{2/3}
	\le
	2L_0.
	\]
	Therefore
	\[
	\frac{a_t^2}
	{L_0+2^{2/3}L_{2/3}a_t^{2/3}}
	\ge
	\frac{a_t^2}{2L_0}.
	\]
	Summing over \(t\in\mathcal S_T\) and using
	\eqref{eq:two_stage_basic_sum}, we obtain
	\[
	\sum_{t\in\mathcal S_T}\frac{a_t^2}{2L_0}
	\le
	\sum_{t\in\mathcal S_T}
	\frac{a_t^2}
	{L_0+2^{2/3}L_{2/3}a_t^{2/3}}
	\le
	C.
	\]
	This proves \eqref{eq:two_stage_small_sum}.
	
	For \(t\in\mathcal L_T\) we have
	\[
	2^{2/3}L_{2/3}a_t^{2/3}>L_0.
	\]
	Hence
	\[
	L_0+2^{2/3}L_{2/3}a_t^{2/3}
	<
	2\cdot 2^{2/3}L_{2/3}a_t^{2/3}
	=
	2^{5/3}L_{2/3}a_t^{2/3}.
	\]
	Therefore
	\[
	\frac{a_t^2}
	{L_0+2^{2/3}L_{2/3}a_t^{2/3}}
	>
	\frac{a_t^2}
	{2^{5/3}L_{2/3}a_t^{2/3}}
	=
	\frac{a_t^{4/3}}
	{2^{5/3}L_{2/3}}.
	\]
	Summing over \(t\in\mathcal L_T\) and using
	\eqref{eq:two_stage_basic_sum}, we obtain
	\[
	\sum_{t\in\mathcal L_T}
	\frac{a_t^{4/3}}
	{2^{5/3}L_{2/3}}
	\le
	\sum_{t\in\mathcal L_T}
	\frac{a_t^2}
	{L_0+2^{2/3}L_{2/3}a_t^{2/3}}
	\le
	C.
	\]
	This proves \eqref{eq:two_stage_large_sum}.
	
	The bounds \eqref{eq:two_stage_small_min} and
	\eqref{eq:two_stage_large_min} follow by taking the minimum over the
	corresponding finite sets. Indeed, if \(N_S>0\), then
	\[
	\sum_{t\in\mathcal S_T}a_t^2
	\ge
	N_S\min_{t\in\mathcal S_T}a_t^2,
	\]
	and similarly for \(\mathcal L_T\).
	
	The degenerate cases \(L_0=0\) or \(L_{2/3}=0\) follow by the same
	argument with the corresponding regime omitted, or by a limiting argument.
\end{proof}

\begin{theorem}[Two types of iterations for backtracking]
	\label{cor:backtracking_two_regimes}
	For Algorithm~\ref{alg:adaptive_gd_backtrack}, fix \(T\ge1\), and let
	\[
	\Delta = f(X^0)-f^*,
	\qquad
	a_t:=\norm{\nabla f(X^t)}_F,
	\qquad
	t=0,\dots,T-1.
	\]
	Define the two sets of iterations
	\[
	\mathcal S_T
	:=
	\left\{
	t:
	2^{2/3}L_{2/3}a_t^{2/3}\le L_0
	\right\},
	\]
	\[
	\mathcal L_T
	:=
	\left\{
	t:
	2^{2/3}L_{2/3}a_t^{2/3}> L_0
	\right\},
	\]
	and let
	\[
	N_S:=|\mathcal S_T|,
	\qquad
	N_L:=|\mathcal L_T|.
	\]
	Then
	\begin{equation}\label{eq:backtracking_small_sum}
		\sum_{t\in\mathcal S_T} a_t^2
		\le
		\frac{2L_0\Delta}{c},
	\end{equation}
	and
	\begin{equation}\label{eq:backtracking_large_sum}
		\sum_{t\in\mathcal L_T} a_t^{4/3}
		\le
		\frac{2^{5/3}L_{2/3}\Delta}{c}.
	\end{equation}
	Consequently, if \(N_S>0\), then
	\begin{equation}\label{eq:backtracking_small_min}
		\min_{t\in\mathcal S_T}
		\norm{\nabla f(X^t)}_F
		\le
		\sqrt{
			\frac{2L_0\Delta}{cN_S}
		},
	\end{equation}
	and if \(N_L>0\), then
	\begin{equation}\label{eq:backtracking_large_min}
		\min_{t\in\mathcal L_T}
		\norm{\nabla f(X^t)}_F
		\le
		\left(
		\frac{2^{5/3}L_{2/3}\Delta}{cN_L}
		\right)^{3/4}.
	\end{equation}
	
	In particular, for any \(\varepsilon>0\), in order to guarantee
	\[
	\min_{0\le t<T}\norm{\nabla f(X^t)}_F\le\varepsilon,
	\]
	it suffices to take
	\begin{equation}\label{eq:backtracking_additive_complexity}
		T
		\ge
		\frac{2L_0\Delta}{c\varepsilon^2}
		+
		\frac{2^{5/3}L_{2/3}\Delta}{c\varepsilon^{4/3}}.
	\end{equation}
	Equivalently, one may write the explicit no-max rate
	\begin{equation}\label{eq:backtracking_no_max_rate}
		\min_{0\le t<T}\norm{\nabla f(X^t)}_F
		\le
		\sqrt{
			\frac{4L_0\Delta}{cT}
		}
		+
		\left(
		\frac{2^{8/3}L_{2/3}\Delta}{cT}
		\right)^{3/4}.
	\end{equation}
\end{theorem}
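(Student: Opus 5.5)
The plan is to combine Theorem~\ref{teo:backtrack_convergence} with Lemma~\ref{lem:two_stage_split}. First, Theorem~\ref{teo:backtrack_convergence} (which relies on Lemma~\ref{lem:armijo_descent} and the lower bound $\eta_t\ge 1/\ell(2a_t)$) gives
\[
\sum_{t=0}^{T-1}\frac{a_t^2}{L_0+2^{2/3}L_{2/3}a_t^{2/3}}\le \frac{\Delta}{c}.
\]
Under the zero-gradient convention of Remark~\ref{def:zero_gradient_convention}, the summands after a stopping time vanish, so this holds for the extended sequence. Applying Lemma~\ref{lem:two_stage_split} with $C=\Delta/c$ then yields \eqref{eq:backtracking_small_sum} and \eqref{eq:backtracking_large_sum} directly. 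The minimum bounds \eqref{eq:backtracking_small_min} and \eqref{eq:backtracking_large_min} are \eqref{eq:two_stage_small_min} and \eqref{eq:two_stage_large_min} with this choice of $C$.

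For the additive complexity \eqref{eq:backtracking_additive_complexity}, argue by contradiction. Suppose $a_t>\varepsilon$ for all $t<T$. Then $\sum_{\mathcal S_T}a_t^2\ge N_S\varepsilon^2$ and $\sum_{\mathcal L_T}a_t^{4/3}\ge N_L\varepsilon^{4/3}$. The two sum bounds therefore force
\[
N_S\le \frac{2L_0\Delta}{c\varepsilon^2},\qquad N_L\le \frac{2^{5/3}L_{2/3}\Delta}{c\varepsilon^{4/3}}.
\]
The inequalities are strict when the corresponding bound is positive, and that positivity has to be handled carefully. Adding them gives $T=N_S+N_L$ strictly below the right-hand side of \eqref{eq:backtracking_additive_complexity}, which contradicts the choice of $T$. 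The degenerate case $L_0=0$ is absorbed because then $\mathcal S_T$ only contains zero gradients. For the strictness, use $\sum_{\mathcal S_T}a_t^2>N_S\varepsilon^2$ whenever $N_S>0$, which is strict since each $a_t>\varepsilon$.

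For the no-max rate \eqref{eq:backtracking_no_max_rate}, use pigeonhole: since $N_S+N_L=T$, one of them is at least $T/2$. If $N_S\ge T/2$, then \eqref{eq:backtracking_small_min} gives $\min_t a_t\le\sqrt{4L_0\Delta/(cT)}$. Otherwise $N_L\ge T/2$, and \eqref{eq:backtracking_large_min} gives $\min_t a_t\le (2^{8/3}L_{2/3}\Delta/(cT))^{3/4}$. In either case the minimum over all $t$ is at most the sum of the two terms.

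The main obstacle is bookkeeping rather than analysis: the degenerate constants $L_0=0$ or $L_{2/3}=0$, and the early-stopping convention. I would handle both by noting that zero gradients trivially satisfy every bound, and that if a set is empty its bound is vacuous.
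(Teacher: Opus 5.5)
Your proposal is correct. The sum bounds, the regime-wise minima, and the additive complexity argument by contradiction match the paper's proof. Your care about strictness is more precise than the paper's, which writes $N_S\varepsilon^2<\sum_{t\in\mathcal S_T}a_t^2$ even though $\mathcal S_T$ may be empty. What is actually needed is that at least one of the two sets is nonempty, which holds because $T\ge1$.

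For the no-max rate \eqref{eq:backtracking_no_max_rate} you take a different route.
\begin{itemize}
\item The paper sets $\varepsilon$ equal to the right-hand side of \eqref{eq:backtracking_no_max_rate}. It checks that each term of \eqref{eq:backtracking_additive_complexity} is then at most $T/2$, and invokes the already proved complexity statement.
\item You apply pigeonhole to $N_S+N_L=T$. Whichever of $N_S,N_L$ is at least $T/2$ is fed into \eqref{eq:backtracking_small_min} or \eqref{eq:backtracking_large_min}.
\end{itemize}

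Your route is more direct and gives a slightly stronger conclusion: the minimum is bounded by the larger of the two terms rather than their sum. It also never divides by $\varepsilon$. The paper's choice of $\varepsilon$ vanishes when $\Delta=0$ and then produces $0/0$; that case is harmless, since $X^0$ is then a global minimizer and $a_0=0$, but it needs a separate remark in the paper's argument. The paper's route has the expository advantage of exhibiting the rate as a direct consequence of the additive complexity bound, which is what its word ``equivalently'' refers to.
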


\begin{proof}
	By Theorem~\ref{teo:backtrack_convergence},
	\[
	c\sum_{t=0}^{T-1}
	\frac{a_t^2}{\ell(2a_t)}
	\le
	\Delta.
	\]
	Hence
	\[
	\sum_{t=0}^{T-1}
	\frac{a_t^2}{\ell(2a_t)}
	\le
	\frac{\Delta}{c}.
	\]
	Applying Lemma~\ref{lem:two_stage_split} with
	\[
	C=\frac{\Delta}{c}
	\]
	gives \eqref{eq:backtracking_small_sum} and
	\eqref{eq:backtracking_large_sum}.
	The bounds \eqref{eq:backtracking_small_min} and
	\eqref{eq:backtracking_large_min} follow from
	Lemma~\ref{lem:two_stage_split}.
	
	We now prove the complexity statement. Suppose that
	\[
	\min_{0\le t<T}a_t>\varepsilon.
	\]
	Then for every \(t\in\mathcal S_T\) we have \(a_t^2>\varepsilon^2\), and hence
	\[
	N_S\varepsilon^2
	<
	\sum_{t\in\mathcal S_T}a_t^2
	\le
	\frac{2L_0\Delta}{c}.
	\]
	Therefore
	\[
	N_S
	<
	\frac{2L_0\Delta}{c\varepsilon^2}.
	\]
	Similarly, for every \(t\in\mathcal L_T\) we have
	\(a_t^{4/3}>\varepsilon^{4/3}\), and hence
	\[
	N_L\varepsilon^{4/3}
	<
	\sum_{t\in\mathcal L_T}a_t^{4/3}
	\le
	\frac{2^{5/3}L_{2/3}\Delta}{c}.
	\]
	Therefore
	\[
	N_L
	<
	\frac{2^{5/3}L_{2/3}\Delta}{c\varepsilon^{4/3}}.
	\]
	Since \(T=N_S+N_L\), we obtain
	\[
	T
	<
	\frac{2L_0\Delta}{c\varepsilon^2}
	+
	\frac{2^{5/3}L_{2/3}\Delta}{c\varepsilon^{4/3}},
	\]
	which contradicts \eqref{eq:backtracking_additive_complexity}.
	Thus
	\[
	\min_{0\le t<T}a_t\le\varepsilon.
	\]

	Finally, to obtain the explicit no-max bound
	\eqref{eq:backtracking_no_max_rate}, set
	\[
	\varepsilon
	:=
	\sqrt{
		\frac{4L_0\Delta}{cT}
	}
	+
	\left(
	\frac{2^{8/3}L_{2/3}\Delta}{cT}
	\right)^{3/4}.
	\]
	Since
	\[
	\varepsilon
	\ge
	\sqrt{
		\frac{4L_0\Delta}{cT}
	},
	\]
	we have
	\[
	\frac{2L_0\Delta}{c\varepsilon^2}
	\le
	\frac{T}{2}.
	\]
	Similarly, since
	\[
	\varepsilon
	\ge
	\left(
	\frac{2^{8/3}L_{2/3}\Delta}{cT}
	\right)^{3/4},
	\]
	we have
	\[
	\frac{2^{5/3}L_{2/3}\Delta}{c\varepsilon^{4/3}}
	\le
	\frac{T}{2}.
	\]
	Hence
	\[
	\frac{2L_0\Delta}{c\varepsilon^2}
	+
	\frac{2^{5/3}L_{2/3}\Delta}{c\varepsilon^{4/3}}
	\le
	T.
	\]
	By the already proved complexity statement,
	\[
	\min_{0\le t<T}a_t\le\varepsilon,
	\]
	which proves \eqref{eq:backtracking_no_max_rate}.
\end{proof}

\begin{remark}[Interpretation of the two types of iterations]
	Corollary~\ref{cor:backtracking_two_regimes} separates the iterations into
	two types: small-gradient iterations and large-gradient iterations.
	The sets \(\mathcal S_T\) and \(\mathcal L_T\) are not assumed to be
	consecutive time intervals; they may interleave arbitrarily.
	The proof uses only the monotone decrease of the objective function and the
	quantitative descent estimate. No monotonicity of the gradient norm,
	quasiconvexity, or gradient dominance condition is required.
	
	The additive complexity \eqref{eq:backtracking_additive_complexity} is an
	unconditional no-max guarantee. If the numbers \(N_S\) and \(N_L\) are known
	a posteriori, the sharper regime-specific bounds
	\eqref{eq:backtracking_small_min} and
	\eqref{eq:backtracking_large_min} should be used.
\end{remark}

\section{Unconstrained SafeAdam-\(\ell\)}

We now turn to the analysis of the unconstrained SafeAdam-\(\ell\) method
(Algorithm~\ref{alg:safeadam_unconstrained}) for solving the low-rank matrix
factorization problem
\[
\min_{X \in \mathbb{R}^{n \times r}} f(X) := F(XX^\top),
\]
where \(F\) satisfies Assumption~\ref{asm:quadratic}. As established in
Section~\ref{sec:generalized_smoothness}, the factorized function \(f\)
possesses the generalized \((2/3,L_0,L_{2/3})\)-smoothness property. The method
combines the Adam direction with a descent-angle certificate and a fallback to
the normalized gradient step, and it chooses the step size using the
\(q\)-function of generalized smoothness. Our goal is to establish the global
convergence rate of this method in terms of the gradient norm. To handle the
case of a zero gradient, we adopt the convention of
Definition~\ref{def:zero_gradient_convention}. The following theorem provides
the main convergence guarantee together with explicit complexity bounds.

\begin{theorem}[Convergence of SafeAdam-$\ell$]
	\label{teo:safe_momentum_strong_v2}
	Let \(f\) be bounded from below:
	\[
	f(X)\ge f^*,
	\]
	and satisfy the \((2/3,L_0,L_{2/3})\)-smoothness, where
	\[
	L_0\ge0,\qquad L_{2/3}\ge0,
	\]
	and moreover
	\[
	L_0>0 \quad \text{or} \quad L_{2/3}>0.
	\]

	If for some \(\tau<T\) one has
	\[
	\nabla f(X^\tau)=0,
	\]
	then the algorithm stops at iteration \(\tau\).
	By Definition~\ref{def:zero_gradient_convention},
	we extend the sequence by setting
	\[
	X^t:=X^\tau
	\qquad
	\forall\,t\ge \tau.
	\]
	All summands with \(t\ge\tau\) are zero.
	The finite sum over the iterations before \(\tau\) is bounded by the same
	telescoping argument applied up to the stopping time, and the minimum is zero.
	Hence it suffices to give the proof for the case where
	\[
	\|\nabla f(X^t)\|_F>0
	\qquad
	\forall\,t=0,\dots,T-1.
	\]

	Otherwise, assume that
	\[
	\|\nabla f(X^t)\|_F>0
	\qquad
	\forall\,t=0,\dots,T-1.
	\]
	
	Then, for the sequence generated by the unconstrained SafeAdam-\(\ell\) algorithm,
	for any \(T\ge1\) it holds that
	\[
	\sum_{t=0}^{T-1}
	\frac{\norm{\nabla f(X^t)}_F^2}
	{L_0+2^{2/3}L_{2/3}\norm{\nabla f(X^t)}_F^{2/3}}
	\le
	2\Delta.
	\]
	Consequently,
	\[
	\min_{0\le t<T}
	\frac{\norm{\nabla f(X^t)}_F^2}
	{L_0+2^{2/3}L_{2/3}\norm{\nabla f(X^t)}_F^{2/3}}
	\le
	\frac{2\Delta}{T},
	\]
	where
	\[
	\Delta = f(X^0) - f^*.
	\]
\end{theorem}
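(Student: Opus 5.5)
The plan is a one-step descent inequality of the form
\[
f(X^{t+1})\le f(X^t)-\frac{a_t^2}{2\,\ell(2a_t)},\qquad a_t:=\|\nabla f(X^t)\|_F,
\]
valid at every iteration regardless of which branch the algorithm takes, followed by telescoping. By the convention of Definition~\ref{def:zero_gradient_convention} and the reduction in the statement, we may assume $a_t>0$ for all $t<T$. Write $g_t=\nabla f(X^t)$. At each iteration the algorithm uses a unit direction $d_t$, either the normalized Adam direction (when the descent-angle certificate holds) or the fallback $d_t=g_t/a_t$, and moves by $X^{t+1}=X^t-\eta_t d_t$ with $\eta_t=q(b_t;a_t)$, where $b_t=\langle g_t,d_t\rangle\ge 0$. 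This is exactly the setting of Lemma~\ref{lem:descent_direction}, so
\[
f(X^{t+1})\le f(X^t)-\int_0^{b_t}\frac{b_t-s}{\ell(a_t+s)}\,ds .
\]

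Next I lower-bound the integral exactly as in the proof of Lemma~\ref{lem:armijo_descent}. Both $s\mapsto b_t-s$ and $s\mapsto 1/\ell(a_t+s)$ are nonincreasing on $[0,b_t]$, so Chebyshev's integral inequality gives a lower bound of $\tfrac{b_t}{2}\,q(b_t;a_t)$. Since $\ell$ is nondecreasing and $b_t\le a_t$ (Cauchy--Schwarz, $\|d_t\|_F=1$), we have $q(b_t;a_t)\ge b_t/\ell(a_t+b_t)\ge b_t/\ell(2a_t)$. Together these give a per-step decrease of at least $b_t^2/(2\ell(2a_t))$.

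The remaining task, which I expect to be the main obstacle, is to bound $b_t$ below by $a_t$ (up to the constant built into the algorithm) in both branches.
\begin{itemize}
\item In the fallback branch $d_t=g_t/a_t$, so $b_t=a_t$ and the decrease is exactly $a_t^2/(2\ell(2a_t))$.
\item In the certified branch, the certificate guarantees $b_t\ge\theta a_t$ for the algorithm's angle parameter $\theta\in(0,1]$, which only yields $\theta^2a_t^2/(2\ell(2a_t))$.
\end{itemize}
To recover the clean constant $2\Delta$ in the statement, I would check how the certificate and the step are coupled in Algorithm~\ref{alg:safeadam_unconstrained}. The first option is that the certified branch is accepted only when its guaranteed decrease $\int_0^{b_t}(b_t-s)/\ell(a_t+s)\,ds$ is at least that of the normalized gradient step. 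The second option is that the step length is chosen so the certified model decrease dominates $a_t q(a_t;a_t)/2$. In either case the bound $a_t^2/(2\ell(2a_t))$ holds in both branches. If neither option is built in, the argument goes through with $2\Delta$ replaced by $2\Delta/\theta^2$, and I would note this.

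Finally, I sum the per-step inequality over $t=0,\dots,T-1$ and use $f(X^T)\ge f^*$ to obtain $\sum_t a_t^2/(2\ell(2a_t))\le\Delta$. Substituting $\ell(2a_t)=L_0+2^{2/3}L_{2/3}a_t^{2/3}$ gives the claimed sum bound $2\Delta$, and bounding the minimum by the average gives the $2\Delta/T$ bound. If the algorithm stopped early at $\tau<T$, the summands with $t\ge\tau$ vanish by the convention, and the telescoping up to $\tau$ is unchanged.
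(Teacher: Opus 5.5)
\textbf{Gap: the constant $2\Delta$ is not established.} You correctly spot that the certified Adam branch alone gives only a decrease of $c_{\rm angle}^2a_t^2/(2\ell(2a_t))$. You then guess at two couplings that would repair this: a comparison of model decreases, or a step-length rule. Neither is what Algorithm~\ref{alg:safeadam_unconstrained} does. So, as written, your argument proves only the weaker bound $2\Delta/c_{\rm angle}^2$, not the stated $2\Delta$.

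Your description of the algorithm is also incomplete. It does not simply move along one direction $d_t$ selected by the certificate. At every iteration it does three things:
\begin{enumerate}
\item It forms the gradient candidate $X_t^{\rm PG}=X^t-\gamma_t^{\rm PG}g_t$, with $\gamma_t^{\rm PG}=q(a_t;a_t)/a_t$.
\item It forms $X_t^{\rm Adam}$, which is set equal to $X_t^{\rm PG}$ when the certificate fails.
\item In the final if/else, it sets $X^{t+1}$ to whichever candidate has the smaller \emph{actual} objective value.
\end{enumerate}
That selection step is the missing idea. It gives $f(X^{t+1})\le f(X_t^{\rm PG})$ at every iteration, whichever branch is taken. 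Your first option compares guaranteed model decreases rather than function values, so it does not capture this.

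\textbf{How the paper closes it.} Once $f(X^{t+1})\le f(X_t^{\rm PG})$ is in hand, only the gradient candidate needs analysis. Apply Lemma~\ref{lem:descent_direction} with $d=g_t/a_t$, $b=a_t$, $\eta=q(a_t;a_t)$. Then bound $\ell(a_t+s)\le\ell(2a_t)$ inside the integral to get $f(X_t^{\rm PG})\le f(X^t)-a_t^2/(2\ell(2a_t))$, and telescope. Your Chebyshev estimate for the Adam step is correct but unnecessary here, and $c_{\rm angle}$ does not enter the guaranteed constant at all. Your handling of early stopping and the final telescoping and minimum-versus-average steps are fine.
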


\begin{proof}
	At every iteration the algorithm chooses a direction \(d_t\) and a step size such that 
	the function value is at most that of the projected gradient step. Hence,
	\[
	f(X^{t+1}) \le f(X_t^{\rm PG}),
	\]
	where \(X_t^{\rm PG}=X^t-\gamma_t^{\rm PG}g_t\) is the unconstrained gradient step
	with \(\gamma_t^{\rm PG}=q(a_t;a_t)/a_t\).
	
	By Lemma~\ref{lem:descent_direction} applied to the gradient step, we have
	\[
	f(X_t^{\rm PG})
	\le f(X^t)
	-
	\int_0^{a_t}
	\frac{a_t-s}{\ell(a_t+s)}\,ds.
	\]
	We now use the stronger lower bound
	\[
	\int_0^{a_t}
	\frac{a_t-s}{\ell(a_t+s)}\,ds
	\ge
	\frac{a_t^2}{2\,\ell(2a_t)}.
	\]
	This follows because for \(s\in[0,a_t]\), \(\ell(a_t+s)\le \ell(2a_t)\), so
	\[
	\int_0^{a_t} \frac{a_t-s}{\ell(a_t+s)}\,ds
	\ge
	\frac{1}{\ell(2a_t)}\int_0^{a_t}(a_t-s)\,ds
	=
	\frac{a_t^2}{2\ell(2a_t)}.
	\]
	Thus,
	\[
	f(X^{t+1})
	\le f(X^t)
	-
	\frac{a_t^2}{2\,\ell(2a_t)}.
	\]
	
	Summing over \(t=0,\dots,T-1\), we obtain
	\[
	\frac12 \sum_{t=0}^{T-1} \frac{a_t^2}{\ell(2a_t)}
	\le f(X^0)-f(X^T) \le \Delta.
	\]
	Hence,
	\[
	\sum_{t=0}^{T-1} \frac{a_t^2}{\ell(2a_t)}
	\le 2\Delta.
	\]
	The minimum does not exceed the average, so
	\[
	\min_{0\le t<T} \frac{a_t^2}{\ell(2a_t)}
	\le
	\frac{2\Delta}{T}.
	\]
	Since \(\ell(2a_t)=L_0+2^{2/3}L_{2/3}\,a_t^{2/3}\), the claim follows.
\end{proof}

\begin{remark}[Comparison with the theoretical step]
	For the basic adaptive gradient descent
	(Algorithm~\ref{alg:adaptive_gd_tyurin}) with step $\eta_t^{\mathrm{theory}} = \frac{1}{a_t} q(a_t; a_t)$,
	the same descent analysis as in Theorem~\ref{teo:safe_momentum_strong_v2}
	yields the bound
	\[
	\min_{0\le t<T} \frac{a_t^2}{\ell(2a_t)} \le \frac{2\Delta}{T}.
	\]
	For the backtracking method, Theorem~\ref{teo:backtrack_convergence} gives
	\[
	\min_{0\le t<T} \frac{a_t^2}{\ell(2a_t)} \le \frac{\Delta}{cT}.
	\]
	For $c = 0{.}1$ the formal constant is worse by a factor of
	$\frac{1}{2c} = 5$.
	However, in computational experiments the increased step $\alpha_t \cdot \frac{1}{a_t} q(a_t; a_t)$
	with $\alpha_t$ up to 5 leads to a substantially larger
	real descent at every iteration,
	which fully compensates and outweighs
	the deterioration of the theoretical constant.
\end{remark}

\begin{remark}[Computational cost]
	At every iteration Algorithm~\ref{alg:adaptive_gd_backtrack}
	requires:
	\begin{itemize}
		\item One evaluation of the gradient $\nabla f(X^t)$;
		\item At most $\lceil\log_2 \alpha_{\max}\rceil + 1 = 4$
		evaluations of the function $f$ (for checking the Armijo condition);
		\item One evaluation of the integral $q(a_t; a_t)$.
	\end{itemize}
	For the matrix factorization problem
	$f(X) = \frac{1}{2}\|M - XX^\top\|_F^2$ with a sparse $M$
	the evaluation of the function costs $O(nr^2 + \mathrm{nnz}(M) \cdot r)$,
	which is 1.5--2 times cheaper than the evaluation of the gradient.
	Therefore the additional costs of backtracking
	do not exceed 50\% of the cost of one iteration,
	which is compensated many times over by the reduction
	of the total number of iterations.
\end{remark}

\begin{corollary}[Two types of iterations for SafeAdam-\(\ell\)]
	\label{cor:rate_safeadam_v2}
	\label{cor:safeadam_two_regimes}
	\label{cor:complexity_safeadam_v2}
	For the unconstrained SafeAdam-\(\ell\) algorithm, fix \(T\ge1\), and let
	\[
	\Delta = f(X^0)-f^*,
	\qquad
	a_t:=\norm{\nabla f(X^t)}_F,
	\qquad
	t=0,\dots,T-1.
	\]
	Define the two sets of iterations
	\[
	\mathcal S_T
	:=
	\left\{
	t:
	2^{2/3}L_{2/3}a_t^{2/3}\le L_0
	\right\},
	\]
	\[
	\mathcal L_T
	:=
	\left\{
	t:
	2^{2/3}L_{2/3}a_t^{2/3}> L_0
	\right\},
	\]
	and let
	\[
	N_S:=|\mathcal S_T|,
	\qquad
	N_L:=|\mathcal L_T|.
	\]
	Then
	\begin{equation}\label{eq:safeadam_small_sum}
		\sum_{t\in\mathcal S_T} a_t^2
		\le
		4L_0\Delta,
	\end{equation}
	and
	\begin{equation}\label{eq:safeadam_large_sum}
		\sum_{t\in\mathcal L_T} a_t^{4/3}
		\le
		2^{8/3}L_{2/3}\Delta.
	\end{equation}
	Consequently, if \(N_S>0\), then
	\begin{equation}\label{eq:safeadam_small_min}
		\min_{t\in\mathcal S_T}
		\norm{\nabla f(X^t)}_F
		\le
		\sqrt{
			\frac{4L_0\Delta}{N_S}
		},
	\end{equation}
	and if \(N_L>0\), then
	\begin{equation}\label{eq:safeadam_large_min}
		\min_{t\in\mathcal L_T}
		\norm{\nabla f(X^t)}_F
		\le
		\left(
		\frac{2^{8/3}L_{2/3}\Delta}{N_L}
		\right)^{3/4}.
	\end{equation}
	
	In particular, for any \(\varepsilon>0\), in order to guarantee
	\[
	\min_{0\le t<T}\norm{\nabla f(X^t)}_F\le\varepsilon,
	\]
	it suffices to take
	\begin{equation}\label{eq:safeadam_additive_complexity}
		T
		\ge
		\frac{4L_0\Delta}{\varepsilon^2}
		+
		\frac{2^{8/3}L_{2/3}\Delta}{\varepsilon^{4/3}}.
	\end{equation}
	Equivalently, one may write the explicit no-max rate
	\begin{equation}\label{eq:safeadam_no_max_rate}
		\min_{0\le t<T}\norm{\nabla f(X^t)}_F
		\le
		\sqrt{
			\frac{8L_0\Delta}{T}
		}
		+
		\left(
		\frac{2^{11/3}L_{2/3}\Delta}{T}
		\right)^{3/4}.
	\end{equation}
\end{corollary}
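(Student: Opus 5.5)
The plan is to reuse, almost verbatim, the argument of Theorem~\ref{cor:backtracking_two_regimes}, feeding in the summed descent bound for SafeAdam-$\ell$ instead of the Armijo one. First, by Theorem~\ref{teo:safe_momentum_strong_v2} (with the zero-gradient convention of Definition~\ref{def:zero_gradient_convention}, so that terminated runs contribute zero summands), we have
\[
\sum_{t=0}^{T-1}\frac{a_t^2}{L_0+2^{2/3}L_{2/3}a_t^{2/3}}\le 2\Delta .
\]
We then apply Lemma~\ref{lem:two_stage_split} with $C=2\Delta$. This immediately yields \eqref{eq:safeadam_small_sum} with constant $2L_0\cdot 2\Delta=4L_0\Delta$, and \eqref{eq:safeadam_large_sum} with constant $2^{5/3}L_{2/3}\cdot 2\Delta=2^{8/3}L_{2/3}\Delta$. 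The regime-wise minimum bounds \eqref{eq:safeadam_small_min} and \eqref{eq:safeadam_large_min} follow from the same lemma, using $\sum_{t\in\mathcal S_T}a_t^2\ge N_S\min a_t^2$ and the analogous inequality for the $4/3$ powers on $\mathcal L_T$.

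For the additive complexity \eqref{eq:safeadam_additive_complexity}, I argue by contradiction. Suppose $\min_t a_t>\varepsilon$. Then $N_S\varepsilon^2<4L_0\Delta$ and $N_L\varepsilon^{4/3}<2^{8/3}L_{2/3}\Delta$ (when a set is empty, the corresponding inequality holds trivially as $0\le\cdot$). Adding these and using $T=N_S+N_L$ gives a strict upper bound on $T$ that contradicts the assumed lower bound. One care point: the strict inequality needs a nonempty set, but if both sets were empty then $T=0$, which is excluded by $T\ge1$.

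For the no-max rate \eqref{eq:safeadam_no_max_rate}, set $\varepsilon$ equal to the right-hand side. Because $\varepsilon\ge\sqrt{8L_0\Delta/T}$, we get $4L_0\Delta/\varepsilon^2\le T/2$. Because $\varepsilon\ge(2^{11/3}L_{2/3}\Delta/T)^{3/4}$, we get $\varepsilon^{4/3}\ge 2^{11/3}L_{2/3}\Delta/T$, so $2^{8/3}L_{2/3}\Delta/\varepsilon^{4/3}\le T/2$. Summing shows that the complexity condition holds, and the previous step concludes.

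The only delicate points are the degenerate cases. When $L_0=0$ or $L_{2/3}=0$, either one of the sets is automatically empty or the bound is read by continuity, exactly as in Lemma~\ref{lem:two_stage_split}. When $\varepsilon=0$, which can happen if $L_0=L_{2/3}$-terms vanish, or formally if $\Delta=0$, I would handle it separately: then every summand must vanish, so $\min_t a_t=0$.
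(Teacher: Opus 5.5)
Your proposal is correct and follows the paper's proof essentially step for step. The paper takes the summed descent bound $2\Delta$ from Theorem~\ref{teo:safe_momentum_strong_v2}, applies Lemma~\ref{lem:two_stage_split} with $C=2\Delta$, proves the additive complexity by contradiction, and chooses $\varepsilon$ equal to the right-hand side for the no-max rate, exactly as you do. Your extra care with empty regime sets (using $T\ge1$ to keep the final inequality strict) and with the case $\varepsilon=0$ (which, since $L_0+L_{2/3}>0$, occurs only when $\Delta=0$) closes small points the paper glosses over.
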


\begin{proof}
	The proof of Theorem~\ref{teo:safe_momentum_strong_v2} gives
	\[
	\sum_{t=0}^{T-1}
	\frac{a_t^2}{\ell(2a_t)}
	\le
	2\Delta,
	\]
	where
	\[
	\ell(2a_t)
	=
	L_0+2^{2/3}L_{2/3}a_t^{2/3}.
	\]
	Applying Lemma~\ref{lem:two_stage_split} with
	\[
	C=2\Delta
	\]
	yields \eqref{eq:safeadam_small_sum} and
	\eqref{eq:safeadam_large_sum}.
	The bounds \eqref{eq:safeadam_small_min} and
	\eqref{eq:safeadam_large_min} follow from
	Lemma~\ref{lem:two_stage_split}.
	
	We now prove the complexity statement. Suppose that
	\[
	\min_{0\le t<T}a_t>\varepsilon.
	\]
	Then for every \(t\in\mathcal S_T\) we have \(a_t^2>\varepsilon^2\), and hence
	\[
	N_S\varepsilon^2
	<
	\sum_{t\in\mathcal S_T}a_t^2
	\le
	4L_0\Delta.
	\]
	Therefore
	\[
	N_S
	<
	\frac{4L_0\Delta}{\varepsilon^2}.
	\]
	Similarly, for every \(t\in\mathcal L_T\) we have
	\(a_t^{4/3}>\varepsilon^{4/3}\), and hence
	\[
	N_L\varepsilon^{4/3}
	<
	\sum_{t\in\mathcal L_T}a_t^{4/3}
	\le
	2^{8/3}L_{2/3}\Delta.
	\]
	Therefore
	\[
	N_L
	<
	\frac{2^{8/3}L_{2/3}\Delta}{\varepsilon^{4/3}}.
	\]
	Since \(T=N_S+N_L\), we obtain
	\[
	T
	<
	\frac{4L_0\Delta}{\varepsilon^2}
	+
	\frac{2^{8/3}L_{2/3}\Delta}{\varepsilon^{4/3}},
	\]
	which contradicts \eqref{eq:safeadam_additive_complexity}.
	Thus
	\[
	\min_{0\le t<T}a_t\le\varepsilon.
	\]
	
	Finally, to obtain the explicit no-max bound
	\eqref{eq:safeadam_no_max_rate}, set
	\[
	\varepsilon
	:=
	\sqrt{
		\frac{8L_0\Delta}{T}
	}
	+
	\left(
	\frac{2^{11/3}L_{2/3}\Delta}{T}
	\right)^{3/4}.
	\]
	Then
	\[
	\frac{4L_0\Delta}{\varepsilon^2}
	\le
	\frac{T}{2},
	\]
	and
	\[
	\frac{2^{8/3}L_{2/3}\Delta}{\varepsilon^{4/3}}
	\le
	\frac{T}{2}.
	\]
	Hence
	\[
	\frac{4L_0\Delta}{\varepsilon^2}
	+
	\frac{2^{8/3}L_{2/3}\Delta}{\varepsilon^{4/3}}
	\le
	T.
	\]
	By the already proved complexity statement,
	\[
	\min_{0\le t<T}a_t\le\varepsilon,
	\]
	which proves \eqref{eq:safeadam_no_max_rate}.
\end{proof}

\begin{remark}[Interpretation of the two types of iterations for SafeAdam]
	Corollary~\ref{cor:safeadam_two_regimes} separates the iterations of the
	unconstrained SafeAdam-\(\ell\) algorithm into two types: small-gradient
	iterations and large-gradient iterations.
	The sets \(\mathcal S_T\) and \(\mathcal L_T\) may interleave arbitrarily.
	The proof uses only the monotone decrease of the objective function and the
	descent certificate of SafeAdam-\(\ell\). No monotonicity of the gradient
	norm, quasiconvexity, or gradient dominance condition is required.
	
	The additive complexity \eqref{eq:safeadam_additive_complexity} is an
	unconditional no-max guarantee. If the numbers \(N_S\) and \(N_L\) are known
	a posteriori, the sharper regime-specific bounds
	\eqref{eq:safeadam_small_min} and
	\eqref{eq:safeadam_large_min} should be used.
\end{remark}

\begin{lemma}[Lower bound for \(b\ge ca\)]
	Let \(g=\nabla f(X)\), \(a=\norm{g}_F\), and let \(d\) be a unit direction with
	\[
	b=\inprod{g}{d}\ge0.
	\]
	If \(a>0\) and \(b\ge c a\) for some \(c>0\), then
	\[
	\int_0^b \frac{b-s}{\ell(a+s)}\,ds
	\;\ge\;
	\frac{c^2a^2}{4\ell(2a)}.
	\]
	If \(a=0\), the inequality holds trivially because both sides equal zero under Convention~\ref{def:zero_gradient_convention}.
\end{lemma}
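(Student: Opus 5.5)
We bound the integrand from below on $[0,b]$ and then integrate. Since $\norm{d}_F=1$, the Cauchy--Schwarz inequality gives $b=\inprod{g}{d}\le a$. Hence for every $s\in[0,b]$ we have $a+s\le a+b\le 2a$. Because $\ell$ is nondecreasing, this yields
\[
\ell(a+s)\le \ell(2a),
\qquad\text{so}\qquad
\frac{1}{\ell(a+s)}\ge\frac{1}{\ell(2a)}
\quad\text{on }[0,b].
\]
The denominator is positive: $a>0$ gives $\ell(2a)>0$, by the standing assumptions $L_0+L_{2/3}>0$ and $\ell>0$ on $(0,\infty)$.

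Next, the factor $b-s$ is nonnegative on $[0,b]$, so we may multiply and integrate:
\[
\int_0^b \frac{b-s}{\ell(a+s)}\,ds \;\ge\; \frac{1}{\ell(2a)}\int_0^b (b-s)\,ds \;=\; \frac{b^2}{2\ell(2a)}.
\]
The hypothesis $b\ge ca$ with $c>0$ and $a>0$ gives $b^2\ge c^2a^2$. Substituting,
\[
\int_0^b \frac{b-s}{\ell(a+s)}\,ds \;\ge\; \frac{c^2a^2}{2\ell(2a)} \;\ge\; \frac{c^2a^2}{4\ell(2a)}.
\]
This is the claimed bound, with a factor of $2$ to spare.

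For the degenerate case $a=0$: since $0\le b\le a$, we get $b=0$, so the integral is over an empty interval and the left-hand side is $0$. The right-hand side is $c^2\cdot 0/(4\ell(0))$, which Convention~\ref{def:zero_gradient_convention} interprets as $0$ (this covers $L_0=0$, where $\ell(0)=0$).

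The only step needing care is confirming $b\le a$, so that the whole integration range stays in $[0,a]$ and the monotonicity bound by $\ell(2a)$ applies. This follows directly from Cauchy--Schwarz and the unit-norm assumption on $d$.
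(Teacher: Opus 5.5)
Your proof is correct and takes essentially the same route as the paper: Cauchy--Schwarz gives $b\le a$, and monotonicity of $\ell$ then gives $\ell(a+s)\le\ell(2a)$ on the integration range. The only difference is that the paper first truncates the integral to $[0,b/2]$ and uses $b-s\ge b/2$ there, which yields exactly the constant $1/4$, whereas you integrate $b-s$ over all of $[0,b]$ and obtain the sharper constant $1/2$ (the same computation the paper itself uses with $b=a$ in the SafeAdam convergence theorem).
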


\begin{proof}
	If \(a=0\), then \(g=0\), so \(b=\inprod{0}{d}=0\). The integral is \(0\), and the right-hand side is \(0\) by convention; the statement is true.
	Assume now \(a>0\).
	Since \(\norm{d}=1\), by Cauchy--Schwarz, \(b\le a\).
	Restrict the integral to \([0,b/2]\):
	\[
	\int_0^b \frac{b-s}{\ell(a+s)}\,ds
	\;\ge\;
	\int_0^{b/2} \frac{b-s}{\ell(a+s)}\,ds.
	\]
	For \(s\in[0,b/2]\), we have \(b-s\ge b/2\), and \(a+s\le a+b/2\le 2a\). Since \(\ell\) is nondecreasing, \(\ell(a+s)\le \ell(2a)\). Hence
	\[
	\int_0^{b/2} \frac{b-s}{\ell(a+s)}\,ds
	\;\ge\;
	\int_0^{b/2} \frac{b/2}{\ell(2a)}\,ds
	=
	\frac{b^2}{4\ell(2a)}.
	\]
	Because \(b\ge c a\), we obtain
	\[
	\frac{b^2}{4\ell(2a)}
	\;\ge\;
	\frac{c^2 a^2}{4\ell(2a)}.
	\]
	This proves the lemma.
\end{proof}

\section{Projected Certified Adam-type Method}

Let us pass to the problem
\[
\min_{X\ge0} f(X).
\]

Let us define the gradient mapping:
\[
G_\gamma(X)
=
\frac{1}{\gamma}
\left(
X-\mathrm{Proj}(X-\gamma\nabla f(X))
\right).
\]

\begin{lemma}[Projected descent for a gradient step]
	\label{lem:projected_descent}
	Let \(X\ge0\), \(g=\nabla f(X)\), \(a=\norm{g}_F\), and let
	\[
	\gamma=
	\begin{cases}
		\dfrac{q(a;a)}{a}, & a>0,\\[6pt]
		0, & a=0.
	\end{cases}
	\]
	Set
	\[
	X^+=\mathrm{Proj}(X-\gamma g),
	\]
	and define
	\[
	G=
	\begin{cases}
		\dfrac{X-X^+}{\gamma}, & \gamma>0,\\[6pt]
		0, & \gamma=0.
	\end{cases}
	\]
	Then
	\[
	f(X^+)
	\le
	f(X)
	-
	\frac{\gamma}{2}\norm{G}_F^2.
	\]
\end{lemma}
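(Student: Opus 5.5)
The plan is to apply the integral increment bound of Lemma~\ref{lem:function_increment} with $Y=X^+$. I will then control the linear term with the variational characterization of the projection, and the integral term with a chord bound coming from the concavity of $q(\cdot\,;a)$.

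\emph{Trivial case.} If $a=0$, then $\gamma=0$, $X^+=\mathrm{Proj}(X)=X$ (because $X\ge0$) and $G=0$, so the claim reads $f(X)\le f(X)$. Assume from now on that $a>0$. Set $s:=q(a;a)$, so that $\gamma=s/a>0$. Put $D:=X^+-X=-\gamma G$ and $R:=\norm{D}_F=\gamma\norm{G}_F$.

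\emph{Step 1: linear term.} The nonnegative orthant is a closed convex set containing $X$. The obtuse-angle property of the Euclidean projection therefore gives
\[
\inprod{X-\gamma g-X^+}{X-X^+}\le 0 .
\]
Rearranging yields $\inprod{g}{X^+-X}\le -\frac{1}{\gamma}\norm{X^+-X}_F^2=-\gamma\norm{G}_F^2$. Nonexpansiveness of $\mathrm{Proj}$, together with $X=\mathrm{Proj}(X)$, also gives
\[
R\le\norm{\gamma g}_F=\gamma a=s .
\]

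\emph{Step 2: integral term.} This is the main point. Since $q'(u;a)=1/\ell(a+u)$ and $\ell$ is nondecreasing, $q(\cdot\,;a)$ is concave and increasing. Hence its inverse $q^{-1}(\cdot\,;a)$ is convex on $[0,\infty)$ and satisfies $q^{-1}(0;a)=0$. Convexity through the origin gives the chord bound, for $\tau\in[0,s]$,
\[
q^{-1}(\tau;a)\le \frac{\tau}{s}\,q^{-1}(s;a)=\frac{\tau a}{s}=\frac{\tau}{\gamma}.
\]
This is where $R\le s$ from Step~1 is needed. Consequently
\[
\int_0^R q^{-1}(\tau;a)\,d\tau\le \frac{R^2}{2\gamma}=\frac{\gamma}{2}\norm{G}_F^2 .
\]
The convexity of $q^{-1}$ needs a little care when $L_0=0$, where $q$ is only continuous at $0$. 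Concavity of $q$ on $[0,\infty)$ still holds because $q'$ is nonincreasing on $(0,\infty)$, so the chord argument is unaffected.

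\emph{Step 3: combine.} Substituting Steps~1 and~2 into Lemma~\ref{lem:function_increment} gives
\[
f(X^+)\le f(X)-\gamma\norm{G}_F^2+\frac{\gamma}{2}\norm{G}_F^2=f(X)-\frac{\gamma}{2}\norm{G}_F^2,
\]
which is the claim.
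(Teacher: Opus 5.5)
Your proposal is correct and follows essentially the paper's route: the integral increment lemma, the obtuse-angle projection inequality with $Y=X$, nonexpansiveness giving $\norm{X^+-X}_F\le\gamma a$, and convexity of $q^{-1}(\cdot\,;a)$. The only difference is that your single chord bound $q^{-1}(\tau;a)\le\tau/\gamma$ on $[0,s]$ replaces the paper's two steps: first the average comparison $q(r;a)/r\ge q(a;a)/a$, giving $q^{-1}(\gamma r;a)\le r$, and then $\int_0^z\phi\le z\phi(z)/2$. Your caveat about $L_0=0$ is unnecessary, since for $a>0$ one has $\ell(a+u)\ge\ell(a)>0$, so $q(\cdot\,;a)$ is $C^1$ up to $0$.
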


\begin{proof}
	If \(a=0\), then \(\gamma=0\), \(X^+=X\), \(G=0\), and the inequality is trivial.
	Further we assume \(a>0\).
	
	From the property of the projection onto a convex set, for any \(Y\in Q=\mathbb{R}^{n\times r}_{\ge0}\) we have
	\[
	\inprod{X-\gamma g-X^+}{Y-X^+}\le0.
	\]
	We take \(Y=X\). Then
	\[
	\inprod{X-\gamma g-X^+}{X-X^+}\le0.
	\]
	Since
	\[
	X-X^+=\gamma G,
	\]
	we obtain
	\[
	\inprod{\gamma G-\gamma g}{\gamma G} \le 0,
	\]
	that is,
	\[
	\inprod{g}{G}\ge\norm{G}_F^2.
	\]
	
	Denote
	\[
	r = \norm{G}_F.
	\]
	
	From the nonexpansiveness of the projection,
	\[
	\norm{X^+-X}_F
	\le
	\norm{(X-\gamma g)-X}_F
	=
	\gamma a.
	\]
	Consequently,
	\[
	r = \frac{\norm{X^+-X}_F}{\gamma}\le a.
	\]
	If \(r = 0\), then \(X^+=X\), \(G=0\), and the inequality
	\(f(X^+) \le f(X) - \frac{\gamma}{2}\norm{G}_F^2\) is trivial.
	Therefore further we assume \(r > 0\).
	
	By the integral lemma on the function increment,
	\[
	f(X^+)
	\le
	f(X)+\inprod{g}{X^+-X}
	+
	\int_0^{\norm{X^+-X}_F}
	q^{-1}(\tau;a)\,d\tau.
	\]
	Since
	\[
	X^+-X=-\gamma G,
	\]
	we have
	\[
	\inprod{g}{X^+-X}
	=
	-\gamma\inprod{g}{G}
	\le
	-\gamma r^2.
	\]
	Therefore
	\[
	f(X^+)
	\le
	f(X)-\gamma r^2
	+
	\int_0^{\gamma r}q^{-1}(\tau;a)\,d\tau.
	\]
	
	Let us estimate the last integral. Since the function
	\[
	v\mapsto \frac{1}{\ell(a+v)}
	\]
	is nonincreasing on \([0,\infty)\), the average over \([0,r]\) is not smaller
	than the average over \([0,a]\) whenever \(0<r\le a\). Hence
	\[
	\frac{q(r;a)}{r}
	=
	\frac{1}{r}\int_0^r\frac{dv}{\ell(a+v)}
	\ge
	\frac{1}{a}\int_0^a\frac{dv}{\ell(a+v)}
	=
	\frac{q(a;a)}{a}
	=
	\gamma.
	\]
	Multiplying by \(r\), we obtain
	\[
	q(r;a)\ge \gamma r.
	\]
	Since \(q^{-1}(\cdot;a)\) is increasing, this implies
	\[
	q^{-1}(\gamma r;a)\le r.
	\]
	
	The function \(q^{-1}(\cdot;a)\) is convex. To see this, note first that \(q(\cdot;a)\) is concave on \([0,\infty)\) because its derivative
	\[
	q'(s;a) = \frac{1}{\ell(a+s)}
	\]
	is nonincreasing (since \(\ell\) is nondecreasing). A standard result states that the inverse of a strictly increasing concave function is convex. For completeness, we include the short proof.
	
	Let \(F = q(\cdot;a)\). Then \(F\) is strictly increasing and concave. Take any \(0 \le u < v\) and \(\lambda \in [0,1]\). By concavity of \(F\),
	\[
	F\bigl(\lambda F^{-1}(u) + (1-\lambda) F^{-1}(v)\bigr)
	\ge
	\lambda F(F^{-1}(u)) + (1-\lambda) F(F^{-1}(v))
	= \lambda u + (1-\lambda) v.
	\]
	Since \(F^{-1}\) is increasing, applying it to both sides yields
	\[
	\lambda F^{-1}(u) + (1-\lambda) F^{-1}(v)
	\ge
	F^{-1}\bigl(\lambda u + (1-\lambda) v\bigr).
	\]
	Thus \(F^{-1} = q^{-1}(\cdot;a)\) is convex.
	
	For a convex function \(\phi\) with \(\phi(0) = 0\) it holds that
	\[
	\int_0^z\phi(\tau)\,d\tau
	\le
	\frac{z\phi(z)}{2}.
	\]
	
	Applying this to \(\phi = q^{-1}(\cdot; a)\) and \(z = \gamma r\), we obtain
	\[
	\int_0^{\gamma r}q^{-1}(\tau;a)\,d\tau
	\le
	\frac{\gamma r}{2}q^{-1}(\gamma r;a)
	\le
	\frac{\gamma r^2}{2}.
	\]
	
	Consequently,
	\[
	f(X^+)
	\le
	f(X)-\gamma r^2+\frac{\gamma r^2}{2}
	=
	f(X)-\frac{\gamma}{2}\norm{G}_F^2.
	\]
	The lemma is proved.
\end{proof}

\begin{theorem}[Convergence of the certified projected method]
	\label{teo:projected_convergence}
	Let \(f\) be bounded from below and satisfy the \((2/3,L_0,L_{2/3})\)-smoothness,
	where
	\[
	L_0\ge0,\qquad L_{2/3}\ge0,
	\]
	and let the condition
	\[
	L_0>0 \quad \text{or} \quad L_{2/3}>0
	\]
	hold.
	
	If at some iteration \(\tau<T\) it holds that \(\nabla f(X^\tau)=0\),
	then the algorithm stops.
	By Definition~\ref{def:zero_gradient_convention}, the sequence is extended
	by stationarity.
	All summands with \(t\ge\tau\) are zero.
	The finite sum over the iterations before \(\tau\) is bounded by the same
	telescoping argument applied up to the stopping time, and the minimum is zero.
	
	Otherwise, for the sequence generated by the certified projected SafeAdam-\(\ell\), 
	for any \(T\ge1\) it holds that
	\[
	\sum_{t=0}^{T-1}
	\frac{\norm{G_t}_F^2}
	{L_0+2^{2/3}L_{2/3}\norm{\nabla f(X^t)}_F^{2/3}}
	\le
	2\Delta.
	\]
	Consequently,
	\[
	\min_{0\le t<T}
	\frac{\norm{G_t}_F^2}
	{L_0+2^{2/3}L_{2/3}\norm{\nabla f(X^t)}_F^{2/3}}
	\le
	\frac{2\Delta}{T},
	\]
	where
	\[
	\Delta=f(X^0)-f^*.
	\]
\end{theorem}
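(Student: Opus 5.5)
The proof will mirror that of Theorem~\ref{teo:safe_momentum_strong_v2}, with Lemma~\ref{lem:projected_descent} in place of Lemma~\ref{lem:descent_direction}. The certified projected SafeAdam-\(\ell\) accepts its candidate \(X^{t+1}\) only if its value does not exceed that of the projected gradient step. That reference step is \(X_t^{\rm PG}=\mathrm{Proj}(X^t-\gamma_t g_t)\) with \(\gamma_t=q(a_t;a_t)/a_t\), \(a_t=\norm{\nabla f(X^t)}_F\), and \(G_t=(X^t-X_t^{\rm PG})/\gamma_t\); otherwise the method falls back to \(X_t^{\rm PG}\). So in either case
\[
f(X^{t+1})\le f(X_t^{\rm PG})\le f(X^t)-\frac{\gamma_t}{2}\norm{G_t}_F^2 ,
\]
where the second inequality is Lemma~\ref{lem:projected_descent} applied at \(X=X^t\ge 0\). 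I am assuming here that the algorithm defines \(G_t\) as the gradient mapping with the theoretical step \(\gamma_t\), independently of the Adam direction. If it instead uses a different step, I would first reduce to this \(\gamma_t\); that would require monotonicity of \(\gamma\mapsto\norm{G_\gamma}_F\), which is a standard projection fact.

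Next I lower-bound \(\gamma_t\) exactly as in the proof of Theorem~\ref{teo:backtrack_convergence}. Since \(\ell\) is nondecreasing, \(\ell(a_t+s)\le\ell(2a_t)\) for \(s\in[0,a_t]\). Hence \(q(a_t;a_t)\ge a_t/\ell(2a_t)\), and therefore \(\gamma_t\ge 1/\ell(2a_t)=1/(L_0+2^{2/3}L_{2/3}a_t^{2/3})\). Substituting gives the per-iteration descent
\[
f(X^{t+1})\le f(X^t)-\frac{\norm{G_t}_F^2}{2\,(L_0+2^{2/3}L_{2/3}a_t^{2/3})}.
\]
All iterates remain feasible, because every candidate is projected onto \(\mathbb{R}^{n\times r}_{\ge0}\); this is what allows the lemma to be reapplied at every step.

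Telescoping over \(t=0,\dots,T-1\) and using \(f(X^T)\ge f^*\) yields the sum bound \(2\Delta\). The bound on the minimum then follows because the minimum does not exceed the average. For the zero-gradient case, I will use Convention~\ref{def:zero_gradient_convention}: at a stationary point \(\gamma=0\) and \(G=0\), so the extended terms vanish, and the sum up to the stopping time is handled by the same telescoping.

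The main obstacle is justifying the certificate inequality \(f(X^{t+1})\le f(X_t^{\rm PG})\) from the algorithm's acceptance rule. It has to be read off the pseudocode in the appendix: I would check that the fallback branch is exactly \(X_t^{\rm PG}\), and that the accepted Adam branch is tested against \(f(X_t^{\rm PG})\), or against the explicit bound \(f(X^t)-\frac{\gamma_t}{2}\norm{G_t}_F^2\), rather than against a weaker Armijo-type threshold.
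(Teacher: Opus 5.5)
Your proposal is correct and follows the paper's proof essentially step for step: the certificate $f(X^{t+1})\le f(X_t^{\mathrm{PG}})$, then Lemma~\ref{lem:projected_descent}, then the bound $\gamma_t^{\mathrm{PG}}\ge 1/\ell(2a_t)$, then telescoping, with the minimum bounded by the average. The two points you flagged are settled by Algorithm~\ref{alg:safeadam_projected}: it defines $G_t=(X^t-X_t^{\mathrm{PG}})/\gamma_t^{\mathrm{PG}}$ with the theoretical step, and it accepts the Adam candidate only when $f(X_t^{\mathrm{Adam}})\le f(X_t^{\mathrm{PG}})$, otherwise falling back to $X_t^{\mathrm{PG}}$.
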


\begin{proof}
	By the construction of the algorithm
	\[
	f(X^{t+1})\le f(X_t^{\rm PG}).
	\]
	By the lemma on the projected descent,
	\[
	f(X_t^{\rm PG})
	\le
	f(X^t)
	-
	\frac{\gamma_t^{\rm PG}}{2}\norm{G_t}^2.
	\]
	Consequently,
	\[
	f(X^{t+1})
	\le
	f(X^t)
	-
	\frac{\gamma_t^{\rm PG}}{2}\norm{G_t}^2.
	\]
	
	Since \(\ell\) is nondecreasing,
	\[
	\gamma_t^{\rm PG}
	=
	\int_0^1\frac{dv}{\ell(a_t+a_tv)}
	\ge
	\frac{1}{\ell(2a_t)}.
	\]
	Therefore
	\[
	f(X^{t+1})
	\le
	f(X^t)
	-
	\frac{\norm{G_t}^2}{2\ell(2a_t)}.
	\]
	Summing over \(t=0,\dots,T-1\), we obtain
	\[
	\sum_{t=0}^{T-1}
	\frac{\norm{G_t}^2}{\ell(2a_t)}
	\le
	2\Delta.
	\]
	The minimum does not exceed the average:
	\[
	\min_{0\le t<T}
	\frac{\norm{G_t}^2}{\ell(2a_t)}
	\le
	\frac{2\Delta}{T}.
	\]
	The theorem is proved.
\end{proof}

\begin{lemma}[Boundedness of the Initial Level Set and of the Gradients]
	\label{lem:bounded_gradients}
	Let
	\[
	F(Y)=\frac12\langle\mathcal A Y,Y\rangle+\langle B,Y\rangle
	\]
	satisfy Assumption~\ref{asm:quadratic}, and let \(f(X)=F(XX^\top)\).
	Let \(X^0\ge0\) be the starting point of Algorithm~\ref{alg:safeadam_projected}, and define
	\[
	C_0:=\max\{f(X^0),0\}.
	\]
	Then the feasible initial level set
	\[
	\mathcal L_0
	:=
	\left\{
	X\in\mathbb R^{n\times r}_{\ge0}
	\;:\;
	f(X)\le f(X^0)
	\right\}
	\]
	is bounded. More explicitly, every \(X\in\mathcal L_0\) satisfies
	\[
	\norm{X}_F^2\le R^2,
	\]
	where
	\[
	R^2
	:=
	\frac{r\norm{B}_F}{\mu}
	+
	\sqrt{
		\left(\frac{r\norm{B}_F}{\mu}\right)^2
		+
		\frac{2r C_0}{\mu}
	}.
	\]
	Consequently, for every \(X\in\mathcal L_0\),
	\[
	\norm{\nabla f(X)}_F
	\le
	G_{\text{max}},
	\]
	where
	\[
	G_{\text{max}}
	:=
	2\left(
	\norm{\mathcal A}_{\mathbb S,\mathrm{op}} R^2
	+
	\norm{B}_F
	\right)R.
	\]
	In particular, for the sequence generated by Algorithm~\ref{alg:safeadam_projected},
	\[
	\norm{\nabla f(X^t)}_F\le G_{\text{max}},
	\qquad
	t=0,1,2,\dots
	\]
\end{lemma}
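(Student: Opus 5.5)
The plan is to bound \(f\) from below by a quartic polynomial in \(\norm{X}_F\), deduce the radius \(R\) from the level-set inequality, then bound the gradient using the formula \eqref{gradient}, and finally invoke monotonicity of the algorithm.

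\emph{Step 1 (lower bound on \(f\)).} For any \(X\) we write
\[
f(X)=\tfrac12\langle\mathcal A(XX^\top),XX^\top\rangle+\langle B,XX^\top\rangle .
\]
The first term is controlled by \eqref{quad-term} from the proof of Theorem~\ref{thm:lower-bound}: \(\langle\mathcal A(XX^\top),XX^\top\rangle\ge \frac{\mu}{r}\norm{X}_F^4\). This uses strong convexity only on \(\mathbb S^{n\times n}\), since \(XX^\top\) is symmetric. The second term is controlled by \eqref{lin-term}: \(\langle B,XX^\top\rangle\ge-\norm{B}_F\norm{X}_F^2\). With \(u:=\norm{X}_F^2\) this gives
\[
f(X)\ge \frac{\mu}{2r}u^2-\norm{B}_F u .
\]

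\emph{Step 2 (level set bound).} If \(X\in\mathcal L_0\), then \(f(X)\le f(X^0)\le C_0\). Hence \(u\ge0\) satisfies
\[
\frac{\mu}{2r}u^2-\norm{B}_F u-C_0\le 0 .
\]
Multiplying by \(2r/\mu\) gives \(u^2-2\frac{r\norm{B}_F}{\mu}u-\frac{2rC_0}{\mu}\le0\). Since \(C_0\ge0\), the quadratic has a nonnegative larger root, which is exactly \(R^2\), so \(u\le R^2\). This is the only place where \(C_0=\max\{f(X^0),0\}\) matters: it keeps the discriminant nonnegative and the root real. Nonnegativity of \(X\) is not actually needed here.

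\emph{Step 3 (gradient bound).} By \eqref{gradient}, \(\nabla f(X)=2(\mathcal A(XX^\top)+B)X\). We use submultiplicativity \(\norm{MX}_F\le\norm{M}_F\norm{X}_F\), together with \(\norm{\mathcal A(XX^\top)}_F\le\norm{\mathcal A}_{\mathbb S,\mathrm{op}}\norm{XX^\top}_F\) (valid because \(XX^\top\in\mathbb S^{n\times n}\)) and Lemma~\ref{lem:norm_XX}. This yields
\[
\norm{\nabla f(X)}_F\le 2\bigl(\norm{\mathcal A}_{\mathbb S,\mathrm{op}}\norm{X}_F^2+\norm{B}_F\bigr)\norm{X}_F ,
\]
which is monotone in \(\norm{X}_F\). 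Substituting \(\norm{X}_F\le R\) gives \(G_{\max}\).

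\emph{Step 4 (trajectory).} It remains to show \(X^t\in\mathcal L_0\) for all \(t\). Feasibility \(X^t\ge0\) holds because every iterate is a projection onto the orthant. For monotonicity we follow the proof of Theorem~\ref{teo:projected_convergence}: the certified construction guarantees \(f(X^{t+1})\le f(X_t^{\rm PG})\le f(X^t)\), using Lemma~\ref{lem:projected_descent}. Induction then gives \(f(X^t)\le f(X^0)\), and under the stopping convention the extended sequence is stationary, so the bound persists.

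The main obstacle is this last step. The monotonicity relies on the certification rule of Algorithm~\ref{alg:safeadam_projected}, namely that its accepted iterate is never worse than the projected gradient step, which is stated only in the algorithm and the earlier proof. I would cite that certificate explicitly rather than re-derive it.
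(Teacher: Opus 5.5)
Your proposal is correct and follows the paper's proof essentially step for step. Both use the same quartic lower bound $f(X)\ge\frac{\mu}{2r}\norm{X}_F^4-\norm{B}_F\norm{X}_F^2$; the paper gets it by bounding $\frac{\mu}{2}\norm{S}_F^2-\norm{B}_F\norm{S}_F$, with $S=XX^\top$, term by term, which is exactly what your use of \eqref{quad-term} and \eqref{lin-term} does. From there you follow the same steps: the larger-root computation for $R^2$, the submultiplicative bound on $2(\mathcal A(XX^\top)+B)X$, and the induction $f(X^{t+1})\le f(X_t^{\rm PG})\le f(X^t)$ via Lemma~\ref{lem:projected_descent} and the selection rule of Algorithm~\ref{alg:safeadam_projected}.
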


\begin{proof}
	Let \(X\in\mathbb R^{n\times r}\) and set \(S=XX^\top\in\mathbb S^{n\times n}\).
	By the \(\mu\)-positive definiteness of \(\mathcal A\) on symmetric matrices,
	\[
	\langle\mathcal A S,S\rangle
	\ge
	\mu\norm{S}_F^2.
	\]
	Also, by the Cauchy--Schwarz inequality,
	\[
	\langle B,S\rangle
	\ge
	-\norm{B}_F\norm{S}_F.
	\]
	Therefore
	\[
	f(X)
	=
	\frac12\langle\mathcal A S,S\rangle+\langle B,S\rangle
	\ge
	\frac{\mu}{2}\norm{S}_F^2
	-
	\norm{B}_F\norm{S}_F.
	\]
	
	If \(\sigma_1,\dots,\sigma_r\) are the singular values of \(X\), then
	\[
	\norm{S}_F^2
	=
	\norm{XX^\top}_F^2
	=
	\sum_{i=1}^r\sigma_i^4
	\ge
	\frac1r\left(\sum_{i=1}^r\sigma_i^2\right)^2
	=
	\frac1r\norm{X}_F^4.
	\]
	Hence
	\[
	\norm{S}_F
	\ge
	\frac{1}{\sqrt r}\norm{X}_F^2.
	\]
	By Lemma~\ref{lem:norm_XX},
	\[
	\norm{S}_F
	=
	\norm{XX^\top}_F
	\le
	\norm{X}_F^2.
	\]
	Combining these estimates gives
	\[
	f(X)
	\ge
	\frac{\mu}{2r}\norm{X}_F^4
	-
	\norm{B}_F\norm{X}_F^2.
	\]
	
	Now let \(X\in\mathcal L_0\). Since \(f(X)\le f(X^0)\le C_0\), with
	\[
	z:=\norm{X}_F^2,
	\]
	we obtain
	\[
	\frac{\mu}{2r}z^2-\norm{B}_F z
	\le
	C_0.
	\]
	Thus \(z\ge0\) cannot exceed the positive root of the quadratic equation
	\[
	\frac{\mu}{2r}z^2-\norm{B}_F z-C_0=0.
	\]
	Therefore
	\[
	\norm{X}_F^2=z
	\le
	\frac{r\norm{B}_F}{\mu}
	+
	\sqrt{
		\left(\frac{r\norm{B}_F}{\mu}\right)^2
		+
		\frac{2r C_0}{\mu}
	}
	=:R^2.
	\]
	This proves that \(\mathcal L_0\) is bounded.
	
	Next, using the gradient formula
	\[
	\nabla f(X)=2\bigl(\mathcal A(XX^\top)+B\bigr)X,
	\]
	we get
	\[
	\begin{aligned}
		\norm{\nabla f(X)}_F
		&\le
		2\norm{\mathcal A(XX^\top)X}_F
		+
		2\norm{BX}_F
		\\
		&\le
		2\norm{\mathcal A(XX^\top)}_F\norm{X}_F
		+
		2\norm{B}_F\norm{X}_F
		\\
		&\le
		2\left(
		\norm{\mathcal A}_{\mathbb S,\mathrm{op}}\norm{XX^\top}_F
		+
		\norm{B}_F
		\right)\norm{X}_F
		\\
		&\le
		2\left(
		\norm{\mathcal A}_{\mathbb S,\mathrm{op}}\norm{X}_F^2
		+
		\norm{B}_F
		\right)\norm{X}_F.
	\end{aligned}
	\]
	If \(X\in\mathcal L_0\), then \(\norm{X}_F\le R\), and hence
	\[
	\norm{\nabla f(X)}_F
	\le
	2\left(
	\norm{\mathcal A}_{\mathbb S,\mathrm{op}}R^2
	+
	\norm{B}_F
	\right)R
	=
	G_{\text{max}}.
	\]
	
	Finally, by Lemma~\ref{lem:projected_descent} and the selection rule in Algorithm~\ref{alg:safeadam_projected},
	\[
	f(X^{t+1})
	\le
	f(X_t^{\rm PG})
	\le
	f(X^t).
	\]
	Therefore, by induction,
	\[
	f(X^t)\le f(X^0),
	\]
	so \(X^t\in\mathcal L_0\) for all \(t\). Consequently,
	\[
	\norm{\nabla f(X^t)}_F\le G_{\text{max}}.
	\]
	This completes the proof.
\end{proof}

\begin{corollary}[Convergence in the gradient-mapping norm]
	\label{cor:projected_gradient_mapping}
	Under the assumptions of the convergence theorem for Algorithm~\ref{alg:safeadam_projected}, and with \(G_{\text{max}}\) defined in Lemma~\ref{lem:bounded_gradients}, it holds that
	\[
	\min_{0\le t<T}\norm{G_t}_F
	\le
	\sqrt{
		\frac{
			2\left(
			L_0+2^{2/3}L_{2/3} G_{\text{max}}^{2/3}
			\right)\Delta
		}{T}
	}.
	\]
\end{corollary}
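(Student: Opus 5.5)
The plan is to combine the summed descent inequality of Theorem~\ref{teo:projected_convergence} with the uniform gradient bound of Lemma~\ref{lem:bounded_gradients}. This replaces the iteration-dependent denominator $\ell(2a_t)$ by a single constant, after which a standard min-versus-average argument gives the rate.

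First, I would invoke Theorem~\ref{teo:projected_convergence} in its summed form. It gives
\[
\sum_{t=0}^{T-1}\frac{\norm{G_t}_F^2}{L_0+2^{2/3}L_{2/3}a_t^{2/3}}\le 2\Delta,
\qquad a_t=\norm{\nabla f(X^t)}_F .
\]
If the algorithm stops early because some $\nabla f(X^\tau)=0$, then $G_\tau=0$ and the claim is trivial under the convention of Definition~\ref{def:zero_gradient_convention}. So I would assume $a_t>0$ for all $t$.

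Second, I would use Lemma~\ref{lem:bounded_gradients}. It places every iterate in the level set $\mathcal L_0$, because the selection rule together with Lemma~\ref{lem:projected_descent} makes $f(X^t)$ nonincreasing. Hence $a_t\le G_{\text{max}}$ for all $t$. Since $g\mapsto L_0+2^{2/3}L_{2/3}g^{2/3}$ is nondecreasing, each denominator is bounded above by $D:=L_0+2^{2/3}L_{2/3}G_{\text{max}}^{2/3}$. Each summand is therefore at least $\norm{G_t}_F^2/D$, which gives
\[
\sum_{t=0}^{T-1}\norm{G_t}_F^2\le 2D\Delta .
\]

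Third, I would bound the minimum by the average: $T\min_t\norm{G_t}_F^2\le 2D\Delta$. Taking square roots yields the stated bound.

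The only delicate point is the monotonicity step, namely that every iterate, not just the projected-gradient candidate, stays in $\mathcal L_0$. Lemma~\ref{lem:bounded_gradients} already records this, so I would simply cite it. I would also note that $D>0$ because $L_0+L_{2/3}>0$; the one case where this fails to be automatic is $L_0=0$ with $G_{\text{max}}=0$, and there every gradient vanishes, so the claim is trivial.
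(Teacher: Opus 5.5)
Your proof is correct and follows the paper's argument. You combine the summed descent bound of Theorem~\ref{teo:projected_convergence} with the uniform bound $\norm{\nabla f(X^t)}_F\le G_{\text{max}}$ from Lemma~\ref{lem:bounded_gradients} and the monotonicity of $a\mapsto L_0+2^{2/3}L_{2/3}a^{2/3}$, then take square roots. The only cosmetic difference is the order of steps: you bound the denominators uniformly inside the sum before passing to the minimum (which also gives $\sum_{t}\norm{G_t}_F^2\le 2D\Delta$), whereas the paper first selects an index $t_*$ attaining the minimal ratio and bounds the denominator only there.
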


\begin{proof}
	By the convergence theorem for the certified projected method,
	\[
	\min_{0\le t<T}
	\frac{\norm{G_t}_F^2}
	{L_0+2^{2/3}L_{2/3}\norm{\nabla f(X^t)}_F^{2/3}}
	\le
	\frac{2\Delta}{T}.
	\]
	Hence there exists \(t_*\in\{0,\dots,T-1\}\) such that
	\[
	\norm{G_{t_*}}_F^2
	\le
	\frac{2\Delta}{T}
	\left(
	L_0+2^{2/3}L_{2/3}\norm{\nabla f(X^{t_*})}_F^{2/3}
	\right).
	\]
	By Lemma~\ref{lem:bounded_gradients},
	\[
	\norm{\nabla f(X^{t_*})}_F\le G_{\text{max}}.
	\]
	Since the function
	\[
	a\mapsto L_0+2^{2/3}L_{2/3}a^{2/3}
	\]
	is nondecreasing, we obtain
	\[
	\norm{G_{t_*}}_F^2
	\le
	\frac{2\Delta}{T}
	\left(
	L_0+2^{2/3}L_{2/3} G_{\text{max}}^{2/3}
	\right).
	\]
	Taking the square root and using
	\[
	\min_{0\le t<T}\norm{G_t}_F
	\le
	\norm{G_{t_*}}_F
	\]
	gives the claimed bound.
\end{proof}

\section{Relation to the Gradient Projection Method}

\subsection{Gradient Projection Method}
For a function $f\colon \mathbb{R}^{n\times r}\to\mathbb{R}$ and a point $X\in\mathbb{R}^{n\times r}_{\ge 0}$ 
(with nonnegative entries) let us define the \emph{gradient projection} $\nabla_{\mathrm{proj}}f(X)$ 
componentwise as follows:
\[
(\nabla_{\mathrm{proj}}f(X))_{ij} =
\begin{cases}
	\displaystyle\frac{\partial f}{\partial X_{ij}}, & X_{ij}>0,\\[10pt]
	\displaystyle\min\!\left\{\frac{\partial f}{\partial X_{ij}},\;0\right\}, & X_{ij}=0 .
\end{cases}
\]
This is a standard generalization of the gradient for problems with nonnegativity constraints; 
a point $X^*\ge 0$ is stationary if and only if 
$\nabla_{\mathrm{proj}}f(X^*) = 0$.

This setting arises in problems of matrix completion~\cite{Candes09,Recht07,Jain13}, phase retrieval~\cite{Candes15}, symmetric nonnegative factorization~\cite{Dragomir21} and others.

At the same time the function $f \left(X\right) := F \left(X X^\top \right)$ loses convexity, as well as the property of global Lipschitz continuity of the gradient, but \textbf{acquires} the property of \textbf{generalized $(2/3, L_0, L_{2/3})$-smoothness}.

\begin{lemma}
	For any \(X\ge0\)
	\[
	\norm{\nabla_{\mathrm{proj}}f(X)}_F
	\le
	\norm{\nabla f(X)}_F.
	\]
\end{lemma}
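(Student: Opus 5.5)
The plan is a direct entrywise comparison, since both sides are Frobenius norms and the squared Frobenius norm is the sum of squared entries. I will show that for every index pair $(i,j)$,
\[
\bigl|(\nabla_{\mathrm{proj}}f(X))_{ij}\bigr| \le \Bigl|\frac{\partial f}{\partial X_{ij}}\Bigr|,
\]
then square and sum over all $i=1,\dots,n$, $j=1,\dots,r$, and take square roots. No property of $f$ beyond differentiability is needed; in particular the generalized smoothness plays no role here.

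The entrywise bound follows from the two cases in the definition of $\nabla_{\mathrm{proj}}f(X)$. Write $g_{ij}=\partial f/\partial X_{ij}$. If $X_{ij}>0$, the entry equals $g_{ij}$ and equality holds. If $X_{ij}=0$, the entry equals $\min\{g_{ij},0\}$. When $g_{ij}\ge0$ this is $0\le|g_{ij}|$, and when $g_{ij}<0$ it equals $g_{ij}$. In both subcases $|\min\{g_{ij},0\}|\le|g_{ij}|$. Since $X\ge0$, these two cases exhaust all entries.

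Summing the squared inequalities gives $\|\nabla_{\mathrm{proj}}f(X)\|_F^2\le\|\nabla f(X)\|_F^2$, and monotonicity of the square root yields the claim. There is no real obstacle. The only point needing care is that the case split relies on feasibility $X\ge0$, which guarantees that no entry falls outside the definition. One could also observe that $\nabla_{\mathrm{proj}}f(X)$ is obtained from $\nabla f(X)$ by zeroing some entries, so its norm cannot be larger.
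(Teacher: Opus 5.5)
Your proposal is correct and is essentially the paper's proof: an entrywise bound $|(\nabla_{\mathrm{proj}}f(X))_{ij}|\le|\partial f/\partial X_{ij}|$ obtained from the two cases $X_{ij}>0$ and $X_{ij}=0$ of the definition, followed by summing the squares. Your added observations are also correct: only differentiability of $f$ is used, and the projected gradient arises from $\nabla f(X)$ by zeroing certain entries.
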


\begin{proof}
	Let us consider the components of the gradient and of the projection.
	
	For the components with \(X_{ij}>0\) the values coincide.
	For the components of the gradient with \(X_{ij}=0\) the corresponding components of the gradient projection are equal either to \(0\), or to the negative part of the gradient, which does not increase the absolute value.
	Consequently, componentwise
	\[
	|(\nabla_{\mathrm{proj}}f(X))_{ij}|
	\le
	|\nabla_{ij}f(X)|.
	\]
	Summing the squares, we obtain the required statement.
\end{proof}

\begin{lemma}[Local equivalence]
	Let \(X\ge0\), \(\gamma>0\), and let for all positive components
	\[
	X_{ij}>0
	\quad\Rightarrow\quad
	X_{ij}-\gamma\nabla_{ij}f(X)>0
	\]
	hold.
	Then
	\[
	G_\gamma(X)=\nabla_{\mathrm{proj}}f(X).
	\]
\end{lemma}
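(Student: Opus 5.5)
The plan is a purely componentwise verification. The gradient mapping
\[
G_\gamma(X)=\frac{1}{\gamma}\bigl(X-\mathrm{Proj}_{\ge0}(X-\gamma\nabla f(X))\bigr)
\]
is built from the entrywise projection, so its \((i,j)\) entry depends only on \(X_{ij}\) and \(\nabla_{ij}f(X)\):
\[
(G_\gamma(X))_{ij}=\frac{1}{\gamma}\Bigl(X_{ij}-\max\{X_{ij}-\gamma\nabla_{ij}f(X),\,0\}\Bigr).
\]
The gradient projection \(\nabla_{\mathrm{proj}}f(X)\) is also defined entrywise, by the two cases \(X_{ij}>0\) and \(X_{ij}=0\). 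Since \(X\ge0\), every index falls into exactly one of these cases. I will therefore split the index set accordingly and check equality of the entries in each case.

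\emph{Case \(X_{ij}>0\).} By the hypothesis of the lemma, \(X_{ij}-\gamma\nabla_{ij}f(X)>0\), so the maximum equals \(X_{ij}-\gamma\nabla_{ij}f(X)\). Hence
\[
(G_\gamma(X))_{ij}=\frac{1}{\gamma}\bigl(\gamma\nabla_{ij}f(X)\bigr)=\nabla_{ij}f(X),
\]
which is the definition of \((\nabla_{\mathrm{proj}}f(X))_{ij}\) in this case.

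\emph{Case \(X_{ij}=0\).} Here
\[
(G_\gamma(X))_{ij}=-\frac{1}{\gamma}\max\{-\gamma\nabla_{ij}f(X),0\}.
\]
Since \(\gamma>0\), we have \(\max\{-\gamma u,0\}=-\gamma\min\{u,0\}\) for every real \(u\). This gives
\[
(G_\gamma(X))_{ij}=\min\{\nabla_{ij}f(X),0\},
\]
which is again the definition of \((\nabla_{\mathrm{proj}}f(X))_{ij}\). No hypothesis is needed in this case.

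The two matrices agree in every entry, so \(G_\gamma(X)=\nabla_{\mathrm{proj}}f(X)\). There is no real obstacle in this argument. The only points needing care are the sign manipulation \(\max\{-\gamma u,0\}=-\gamma\min\{u,0\}\), which relies on \(\gamma>0\), and the observation that the hypothesis is used only on the active entries \(X_{ij}>0\), where it ensures the projection does not clip.
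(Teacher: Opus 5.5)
Your proof is correct and follows essentially the same componentwise argument as the paper: in the case \(X_{ij}>0\) the hypothesis removes the clipping, and the case \(X_{ij}=0\) needs no hypothesis. The only cosmetic difference is that you merge the paper's two subcases (\(\nabla_{ij}f(X)\le0\) versus \(\nabla_{ij}f(X)>0\)) into the single identity \(\max\{-\gamma u,0\}=-\gamma\min\{u,0\}\), valid for \(\gamma>0\).
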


\begin{proof}
	Let us consider the component \((i,j)\).
	
	\medskip
	\noindent\textbf{Case 1.} \(X_{ij}>0\).
	By the assumption the projection is not active, therefore
	\[
	(G_\gamma(X))_{ij}
	=
	\nabla_{ij}f(X).
	\]
	By definition the gradient projection is also
	\[
	(\nabla_{\mathrm{proj}}f(X))_{ij}
	=
	\nabla_{ij}f(X).
	\]
	
	\medskip
	\noindent\textbf{Case 2.} \(X_{ij}=0\).
	If \(\nabla_{ij}f(X)\le0\), then
	\[
	\mathrm{Proj}(0-\gamma\nabla_{ij}f(X))
	=
	-\gamma\nabla_{ij}f(X),
	\]
	and therefore
	\[
	(G_\gamma(X))_{ij}
	=
	\nabla_{ij}f(X).
	\]
	At the same time
	\[
	(\nabla_{\mathrm{proj}}f(X))_{ij}
	=
	\min\{\nabla_{ij}f(X),0\}
	=
	\nabla_{ij}f(X).
	\]
	
	If, however, \(\nabla_{ij}f(X)>0\), then
	\[
	\mathrm{Proj}(0-\gamma\nabla_{ij}f(X))=0,
	\]
	and therefore
	\[
	(G_\gamma(X))_{ij}=0.
	\]
	But also
	\[
	(\nabla_{\mathrm{proj}}f(X))_{ij}
	=
	\min\{\nabla_{ij}f(X),0\}=0.
	\]
	
	Thus, in all the cases the components coincide.
\end{proof}

\begin{corollary}
	If the iterates of the method lie in the region where the projection is inactive for the positive components, that is,
	\[
	X_{ij}^t > 0 \quad \Longrightarrow \quad X_{ij}^t \ge \delta > 0,
	\]
	and at the same time
	\[
	\gamma_t\norm{\nabla f(X^t)}_\infty \le \frac{\delta}{2},
	\]
	then the convergence with respect to the gradient mapping is equivalent to the convergence with respect to the gradient projection.
\end{corollary}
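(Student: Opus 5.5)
The plan is to reduce the corollary to the preceding Local equivalence lemma, applied iterate by iterate with \(X=X^t\) and \(\gamma=\gamma_t\), where \(\gamma_t\) is the step used to define \(G_t=G_{\gamma_t}(X^t)\). Once that lemma applies at every \(t\), we get the identity \(G_t=\nabla_{\mathrm{proj}}f(X^t)\) for all \(t\). Then any statement about \(\min_t\norm{G_t}_F\) transfers verbatim to \(\min_t\norm{\nabla_{\mathrm{proj}}f(X^t)}_F\), and conversely. In particular, Corollary~\ref{cor:projected_gradient_mapping} then gives the same \(O(T^{-1/2})\) rate for the projected gradient. This is the sense of ``equivalent'' I will prove.

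The only thing to verify is the hypothesis of the Local equivalence lemma: every positive component stays positive after the unprojected step. Fix \(t\) and a pair \((i,j)\) with \(X^t_{ij}>0\). By the first assumption, \(X^t_{ij}\ge\delta\). By the definition of \(\norm{\cdot}_\infty\),
\[
\gamma_t\,|\nabla_{ij}f(X^t)|\le \gamma_t\norm{\nabla f(X^t)}_\infty\le \frac{\delta}{2}.
\]
Hence
\[
X^t_{ij}-\gamma_t\nabla_{ij}f(X^t)\ge \delta-\frac{\delta}{2}=\frac{\delta}{2}>0.
\]
This is exactly the required implication. The components with \(X^t_{ij}=0\) need no assumption, since the lemma's Case~2 handles them unconditionally. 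If \(\gamma_t=0\), which happens only when \(a_t=0\), both \(G_t\) and \(\nabla_{\mathrm{proj}}f(X^t)\) vanish by the conventions of Lemma~\ref{lem:projected_descent} and Definition~\ref{def:zero_gradient_convention}, because \(\nabla f(X^t)=0\).

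The main point needing care is interpretive rather than technical: the corollary's ``equivalence'' has to be made precise. I will state it as equality of the two sequences \(\{G_t\}\) and \(\{\nabla_{\mathrm{proj}}f(X^t)\}\). I will also remark that, without the hypothesis, one still has the one-sided comparison \(\norm{\nabla_{\mathrm{proj}}f(X)}_F\le\norm{\nabla f(X)}_F\) from the earlier lemma. The \(\delta\)-separation of the positive entries together with the step bound is precisely what removes the possible discrepancy caused by active projections on positive coordinates.
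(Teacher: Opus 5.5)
Your proposal is correct and is essentially the intended argument. The paper gives no separate proof and treats the corollary as an immediate consequence of the Local equivalence lemma; your check that \(X^t_{ij}-\gamma_t\nabla_{ij}f(X^t)\ge\delta-\delta/2>0\) on the positive entries supplies the one step needed to invoke it, which yields \(G_t=\nabla_{\mathrm{proj}}f(X^t)\) for every \(t\).
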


\begin{remark}[Why the projected guarantee is not split into two regimes]
	For the unconstrained methods, the convergence bound contains the same quantity in the numerator and in the denominator, namely \(\norm{\nabla f(X^t)}_F\). This allows one to obtain a self-contained inequality for
	\[
	g := \norm{\nabla f(X^t)}_F
	\]
	and to split it into the small-gradient and large-gradient regimes.
	
	For the projected method, the numerator contains the gradient mapping \(\norm{G_t}_F\), while the denominator contains the smoothness modulus evaluated at the ordinary gradient norm \(\norm{\nabla f(X^t)}_F\). These two quantities are not globally equivalent in the presence of active nonnegativity constraints. In particular, the gradient mapping may be zero at a constrained stationary point even though the ordinary gradient is nonzero.
	
	Therefore, the large-gradient cancellation leading to the \(T^{-3/4}\)-type rate is not available for \(\norm{G_t}_F\) without additional assumptions relating \(\norm{\nabla f(X^t)}_F\) and \(\norm{G_t}_F\). To obtain an unconditional bound on \(\norm{G_t}_F\), we upper-bound the denominator using the global gradient bound \(G_{\max}\), which yields a single square-root-type estimate.
\end{remark}

\section{Numerical Experiments}

\subsection{Tasks and Datasets}
\label{sec:tasks_and_datasets}

\paragraph{The task.}
All experiments in this section are performed for the symmetric nonnegative
matrix factorization problem (SymNMF)
\begin{equation}\label{eq:symnmf_exp}
	\min_{X \in \mathbb{R}^{n\times r}_{\ge 0}} \; f(X)
	\;=\; \frac12 \bigl\| M - X X^{\top} \bigr\|_F^2 ,
\end{equation}
which is a concrete instance of the factorized problem \eqref{main_func} with
\[
F(Y) = \frac12 \|M - Y\|_F^2,
\qquad
\mathcal{A} = \mathrm{Id},
\qquad
B = -M,
\qquad
L_F = \mu = 1,
\qquad
R \equiv 0 .
\]
Here $M \in \mathbb{R}^{n \times n}$ is a symmetric entrywise nonnegative
similarity matrix of $n$ data points and $r \ll n$ is the target rank. Since
the operator $\mathcal{A}$ is the identity, $F$ satisfies
Assumption~\ref{asm:quadratic} on the whole space with the exact constants
$\mu = L_F = 1$; consequently the constants $L_0$ and $L_{2/3}$ in the
generalized smoothness bound \eqref{final-p-L0Lp} are explicit, and this is
precisely the setting in which the Bregman-based Dyn-NoLips method
\cite{Dragomir21} is parameter-free. The similarity matrices $M$ are taken
from the public repository \cite{DragomirRepo} accompanying
\cite{Dragomir21}, in which the folder \texttt{SymNMF/data/} contains the
preprocessed similarity matrices for all four datasets used there.

\paragraph{The clustering meaning of \eqref{eq:symnmf_exp}.}
Problem \eqref{eq:symnmf_exp} is a clustering task rather than a pure matrix
approximation. If $\Theta \in \{0,1\}^{n \times r}$ denotes the binary
indicator matrix of a partition of the data points into $r$ disjoint groups,
then $\Theta\Theta^{\top}$ is the ideal block-diagonal affinity matrix, equal
to one exactly on the pairs of objects belonging to one cluster. Replacing the
combinatorial $\Theta$ by a continuous nonnegative factor $X$ yields
\eqref{eq:symnmf_exp}; the problem is therefore a continuous relaxation of the
clustering problem, and it is used as a probabilistic and graph clustering
technique \cite{Ding05,Kuang15}. Up to terms independent of $X$, the
SymNMF objective coincides with the kernel $k$-means criterion for a suitably
normalized affinity matrix \cite{Ding05}, and SymNMF achieves
state-of-the-art clustering accuracy on several text and image corpora
\cite{Kuang15}. After the factor $X$ is computed, the hard assignment is
recovered as
\begin{equation}\label{eq:cluster_assignment}
	\hat c_i = \arg\max_{1 \le j \le r} X_{ij},
\end{equation}
so that each row of $X$ is the soft membership vector of the $i$-th object and
$XX^{\top}$ is the denoised low-rank affinity. For the image data there is a
second, complementary interpretation: by the nonnegativity of $X$ the columns
of the factor are nonnegative basis images and each object is represented as
an additive combination of these parts without cancellation, which is the
classical parts-based representation of \cite{LeeSeung99}.

\paragraph{Construction of the similarity matrices.}
We follow exactly the preprocessing pipeline of \cite{Dragomir21,DragomirRepo},
which in turn reproduces Section~7.1 of \cite{Kuang15}. First, a similarity
graph over the data points is built: for text data the cosine similarity of
term-frequency vectors is used, and for image data a Gaussian kernel with the
self-tuning choice of the scale parameter. Second, the graph is sparsified by
retaining only the edges that connect the $k$-nearest neighbours of each point,
with
\[
k = \lfloor \log_2 n \rfloor + 1 .
\]
Third, $M$ is taken as a normalized version of the resulting adjacency matrix,
with entries $M_{ij} = \hat e_{ij}\, d_i^{-1/2} d_j^{-1/2}$, where
$d_i = \sum_{s} \hat e_{is}$ and $\hat e$ denotes the sparsified similarity
\cite{Kuang15}. The matrix $M$ is thus symmetric, nonnegative and sparse, and
is in general indefinite; its maximum entry does not exceed one and most of
its entries are close to zero. Sparsity is important for the cost of the
oracle: evaluating $f$ and $\nabla f$ costs
$O(nr^2 + \mathrm{nnz}(M)\,r)$, which makes the additional function calls of
the Armijo backtracking affordable (cf.\ the remark on computational cost).

\paragraph{The datasets of the repository.}
The repository \cite{DragomirRepo} provides two text and two image datasets.
\emph{TDT2} is the NIST Topic Detection and Tracking corpus of news stories of
the first half of 1998 broadcast and published by six sources (APW, NYT, VOA,
PRI, CNN, ABC); it contains 11{,}201 on-topic articles classified into 96
semantic categories, and after discarding multi-labelled documents and keeping
the largest categories one obtains a corpus of up to $n = 9394$ documents
\cite{DragomirThesis}. \emph{Reuters-21578} is the standard Reuters newswire
collection of 1987, restricted in \cite{DragomirThesis} to the largest 25
categories, which leaves $n = 7963$ documents; unlike TDT2, its documents may
carry several topic labels and its classes are heavily unbalanced.
\emph{CBCL} is Face Database~\#1 of the MIT Center for Biological and
Computational Learning, consisting of $n = 2429$ grayscale frontal-view face
images of size $19 \times 19$, hand-aligned on a common grid
\cite{Riesenhuber00}; this is the dataset on which the parts-based
interpretation of nonnegative factorizations was originally demonstrated
\cite{LeeSeung99}. \emph{COIL-20} contains $n = 1440$ images of size
$128 \times 128$ representing 20 objects observed under various angles
\cite{DragomirThesis}. In \cite{Dragomir21} the factorization is performed for
$r \in \{10, 20, 30, 40\}$, with $r$ playing the role of the number of
clusters.

\paragraph{The two problem classes used in this paper.}
We use three of the four datasets, which induce two structurally different
classes of SymNMF instances. The first class is formed by the text corpora
\textbf{TDT2} and \textbf{Reuters}: the affinity matrix is sparse, the
documents of one topic share a common vocabulary, so $M$ has a pronounced
block structure and the landscape of \eqref{eq:symnmf_exp} is comparatively
benign. The solved task is document clustering, i.e.\ grouping news articles
by topic with $r$ groups recovered through \eqref{eq:cluster_assignment}. The
second class is formed by the image collection \textbf{CBCL Faces}: all faces
share the same global structure (illumination, pose, background) and differ
mainly in fine details, so the affinity matrix is dense and slowly decaying in
spectrum, and the landscape of \eqref{eq:symnmf_exp} becomes ravine-like, with
the admissible step size differing by orders of magnitude between coordinate
directions. The task here is simultaneously clustering of face images into $r$
groups of similar appearance and learning of nonnegative face prototypes. This
dichotomy is the reason why a single heuristic does not work for both classes:
step enlargement with Armijo backtracking is effective on the predictable text
landscape, whereas coordinatewise preconditioning is needed on the
ill-conditioned image landscape (cf.\ Section~\ref{limitation_Dyn_NoLips}).

\paragraph{Experimental protocol.}
For all datasets and all methods we use the same rank $r = 20$, the same
tolerance $\varepsilon = 10^{-3}$ in the stopping criteria described below,
and the same initialization scheme: the starting point $X^0 \ge 0$ is drawn
randomly with small nonnegative entries. Following \cite{Dragomir21}, we
report the number of iterations, the wall-clock time and the final value of
the objective; where a reference optimum is available, the results may also be
presented through the normalized objective gap
$\bigl(f(X^t) - f_{\min}\bigr)/\|M\|_F^2$ averaged over several random
initializations. The baseline Dyn-NoLips and the projected gradient method
with Armijo line search are taken from \cite{DragomirRepo}
(\texttt{SymNMF/BPG.jl} and \texttt{SymNMF/PG.jl}), so that the comparison is
performed against the same implementation and on the same data files. Since we
study optimization methods, the reported quantities characterize the quality
of the solution of \eqref{eq:symnmf_exp} rather than external clustering
scores; a smaller objective value means a better fit of the low-rank cluster
structure of $M$, which makes it the natural optimization-side proxy for the
quality of the partition recovered by \eqref{eq:cluster_assignment}.

\subsection{Experiment 1: Text Data}
On text data the Dyn-NoLips method demonstrates a high convergence rate, which is due to the exact knowledge of the global smoothness constants for the quadratic loss function. 
However, as was noted in Section~\ref{limitation_Dyn_NoLips}, such a transfer is impossible for problems with unknown kernel parameters.

Among the methods that do not require manual tuning, the basic adaptive GD (Algorithm~\ref{alg:adaptive_gd_tyurin}) required 1014 iterations. 
The application of the proposed adaptive backtracking allowed us to reduce the number of iterations to 206 (an acceleration of $\times 4.9$), which confirms the efficiency of increasing the step with the Armijo condition on problems with a predictable landscape.

On the problem of symmetric nonnegative factorization (SymNMF)
with the matrix $M \in \mathbb{R}^{1000 \times 1000}$
and rank $r = 20$ the following results were obtained.

The adaptation of the factor $\alpha$ gives an additional acceleration
of $\times 2{.}5$ compared with a fixed $\alpha = 2$,
which confirms the efficiency of the proposed update rule.

\paragraph{Stopping criterion.}
For all proposed methods (Adaptive GD, Adaptive GD with backtracking, and SafeAdam-$\ell$), the algorithm is terminated when the relative norm of the gradient mapping falls below a prescribed tolerance $\varepsilon = 10^{-3}$:
\begin{equation}
	\frac{\|G_t\|_F}{\|G_0\|_F} < \varepsilon,
\end{equation}
where $G_t = \frac{1}{\gamma_t} \bigl( X_t - \mathrm{Proj}_{\ge 0}(X_t - \gamma_t \nabla f(X_t)) \bigr)$ is the gradient mapping at iteration $t$, $\gamma_t$ is the step size used at this iteration, and $G_0$ is the gradient mapping computed at the initial point $X_0$ using the initial step size $\gamma_0$. 

For the baseline Dyn-NoLips method, which operates with a different step-size rule, we use the relative norm of the projected gradient as the stopping criterion:
\begin{equation}
	\frac{\|\nabla_{\mathrm{proj}} f(X_t)\|_F}{\|\nabla_{\mathrm{proj}} f(X_0)\|_F} < \varepsilon.
\end{equation}
The local equivalence between $G_t$ and $\nabla_{\mathrm{proj}} f(X_t)$ proved in the theoretical part shows that both quantities are reasonable stationarity measures. However, this equivalence requires additional inactivity conditions for the nonnegativity constraints, which were not explicitly checked in the experiments. Therefore, the two stopping criteria should be regarded as method-specific and practically comparable, but not mathematically identical. In all experiments, the initial point $X_0$ was generated randomly with small nonnegative entries. To compare the final quality of the solutions, we additionally report the objective values $f(X)$ in the tables.
\subsection{TDT2}
\begin{figure}[H] 
	\centering
	\includegraphics[width=1.0\textwidth]{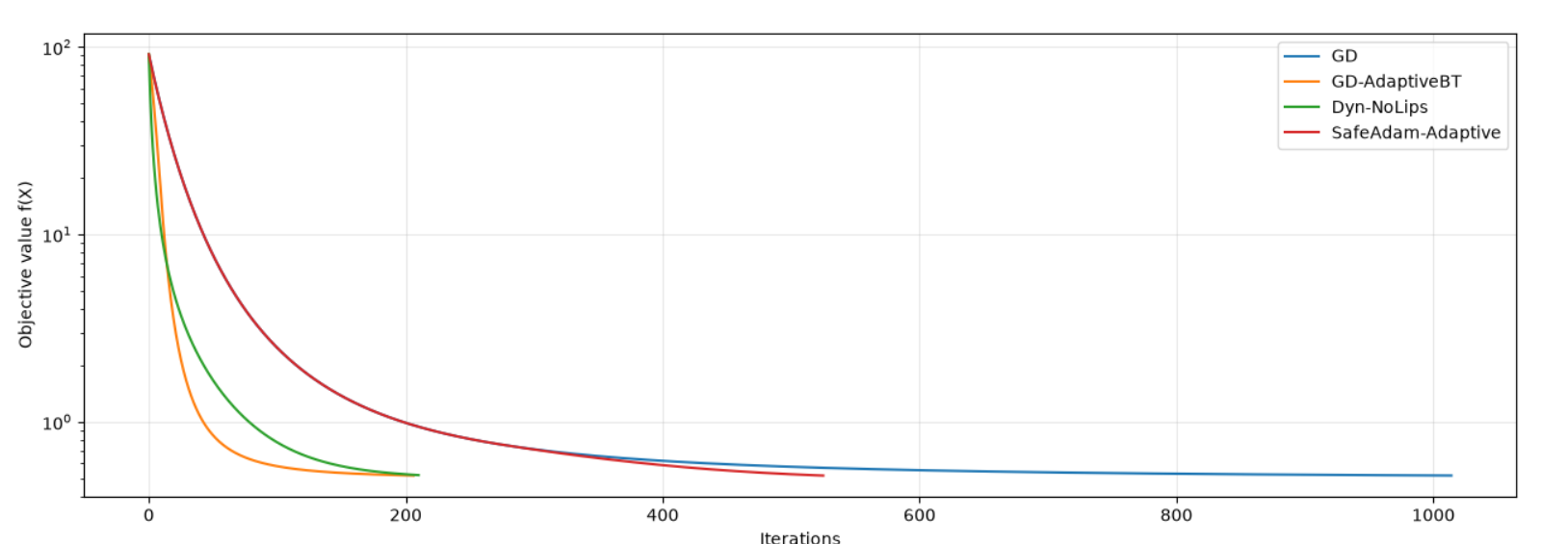} 
	\caption{Convergence rate of the methods on the TDT2 dataset}
	\label{fig:TDT2_picture_convergence} 
\end{figure}

\begin{table}[H]
	\centering
	\caption{Results on the TDT2 dataset ($r=20$, $\text{tol}=10^{-3}$)}
	\label{tab:tdt2_results}
	\begin{tabular}{lccc} 
		\toprule
		Method & Iterations & Time (s) & Final $f(X)$ \\
		\midrule
		GD & 1014 & 19.16 & 0.516649 \\
		GD-Adaptive & 206 & 6.64 & 0.516670 \\
		Dyn-NoLips & 210 & 6.53 & 0.519804 \\
		SafeAdam-Adaptive & 525 & 20.61 & 0.516563 \\
		\bottomrule
	\end{tabular}
\end{table}

In order to evaluate the proposed methods we carried out experiments on two fundamentally different problems of symmetric nonnegative factorization (SymNMF):
\begin{enumerate}
	\item \textbf{Text data (Reuters/TDT2):} a sparse similarity matrix, a specific low-rank structure of the gradient.
	\item \textbf{Face images (CBCL):} a dense similarity matrix, a ravine-like landscape of the loss function.
\end{enumerate}

\subsection{Reuters Dataset}
\begin{figure}[H] 
	\centering
	\includegraphics[width=1.0\textwidth]{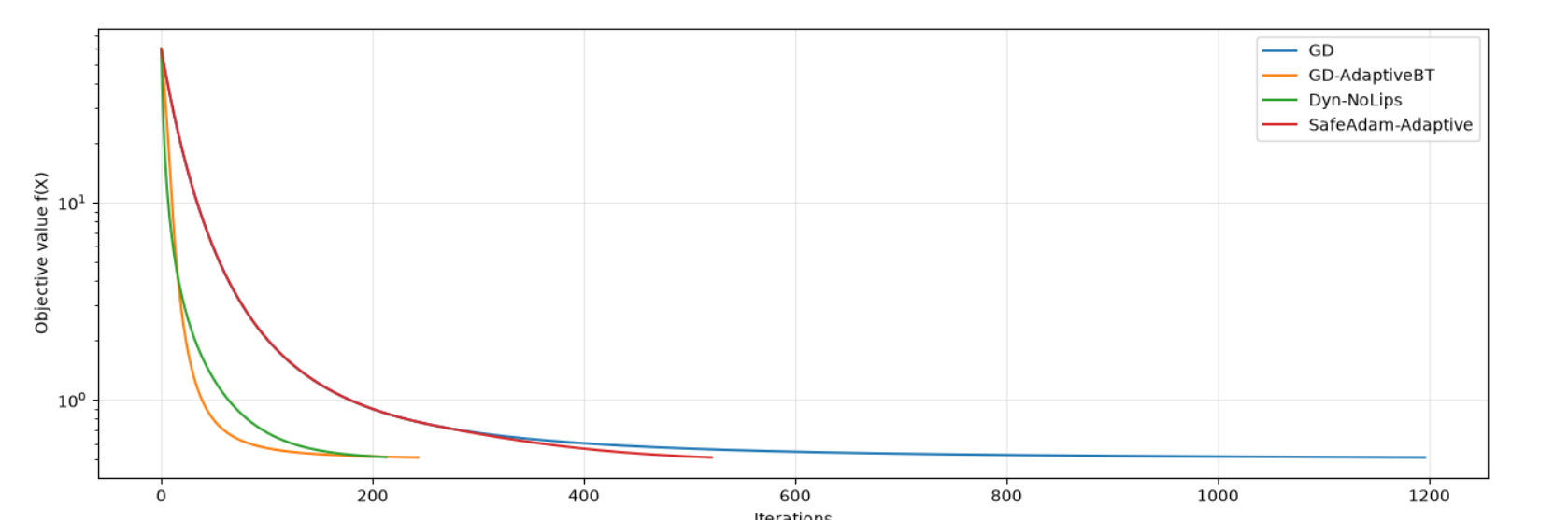} 
	\caption{Convergence rate of the methods on the Reuters dataset}
	\label{fig:Reuters_Picture_convergence} 
\end{figure}

\begin{table}[H]
	\centering
	\caption{Results on the Reuters dataset ($r=20$, $\text{tol}=10^{-3}$)}
	\label{tab:reuters_results}
	\begin{tabular}{lccc} 
		\toprule
		Method & Iterations & Time (s) & Final $f(X)$\\
		\midrule
		GD & 1196 & 20.19 & 0.509812 \\
		GD-Adaptive & 243 & 4.38 & 0.509656 \\
		Dyn-NoLips & 213 & 5.56 & 0.511219 \\
		SafeAdam-Adaptive & 521 & 17.92 & 0.509717 \\
		\bottomrule
	\end{tabular}
\end{table}

\subsection{Experiment 2: Face Images (CBCL)}
On the CBCL dataset (2446 face images, $32 \times 32$ pixels, rank $r=20$) the situation changes fundamentally (Table~\ref{tab:cbcl_results}).

\begin{table}[H]
	\centering
	\caption{Results on the CBCL Faces dataset ($r=20$, $\text{tol}=10^{-3}$)}
	\label{tab:cbcl_results}
	\begin{tabular}{lccc}
		\toprule
		\textbf{Method} & \textbf{Iterations} & \textbf{Time (s)} & \textbf{Final $f(X)$} \\
		\midrule
		GD (Algorithm~\ref{alg:adaptive_gd_tyurin}) & 15000 & 5.80 & 0.4906 \\
		GD-Adaptive & 15000 & 41.97 & 0.4261 \\
		Dyn-NoLips & 9115 & 6.04 & 0.4188 \\
		\textbf{SafeAdam-$\ell$ (Adapt.)} & \textbf{6764} & \textbf{5.46} & \textbf{0.4186} \\
		\bottomrule
	\end{tabular}
\end{table}

\textbf{Analysis of the results:}
\begin{itemize}
	\item \textbf{Degradation of Dyn-NoLips:} the method required 9115 iterations, which confirms our thesis on the limitations of Dyn-NoLips beyond simple quadratic problems.
	\item \textbf{Failure of backtracking:} GD-Adaptive did not converge within the iteration limit, spending 7 times more time per iteration. This is related to the ravine-like structure of the landscape on images: the increased step violated the Armijo condition, which led to multiple calls of the function oracle and to a forced rollback to the conservative step.
	\item \textbf{Victory of SafeAdam-$\ell$:} in contrast to the text data, on images the gradient has a denser spectral structure. The coordinatewise adaptation in Adam acts as an efficient preconditioner that equalizes the scales of the features. In combination with the adaptive threshold $c_{\mathrm{angle}}$ (warm-up) this ensured the best convergence rate (6764 iterations) and the minimal wall-clock time.
\end{itemize}

\subsection{General Conclusion}
The experiments demonstrate that the proposed theoretical approach of $(2/3, L_0, L_{2/3})$-smoothness and of the $q$-function provides a reliable foundation for the construction of algorithms. However, the choice of a particular heuristic (backtracking for the step length or an adaptive direction of the Adam type) has to be determined by the structure of the problem:
\begin{itemize}
	\item For problems with a \textit{predictable landscape} and a cheap oracle the \textbf{adaptive backtracking} is optimal.
	\item For problems with a \textit{ravine-like landscape} and a complicated spectral structure of the gradient \textbf{SafeAdam-$\ell$} is optimal.
\end{itemize}
In both cases our approach outperforms or is comparable with Dyn-NoLips, while not requiring the tuning of the kernel parameters of relative smoothness.

\section{Conclusion}
In this work, modified methods of adaptive gradient descent and Adam are proposed for problems with generalized $(2/3,L_0,L_{2/3})$-smoothness. The methods use the Adam direction, a descent angle check, a fallback to the normalized gradient, and a step length consistent with the $q$-function of generalized smoothness. Global convergence with respect to the gradient norm is proved, and explicit convergence rates are obtained. The results are applicable to problems of low-rank matrix factorization in the Burer--Monteiro form, in particular to the SymNMF problem.

The obtained guarantees feature two explicitly regimes, which naturally arise from the structure of the generalized smoothness bound:
\begin{itemize}
	\item \textbf{Small-gradient regime:} When the gradient norm is small, the constant part $L_0$ of the generalized smoothness dominates. In this regime, the convergence rate is $O(T^{-1/2})$, which corresponds to the standard complexity guarantee $T = O(\varepsilon^{-2})$ to achieve an $\varepsilon$-stationary point.
	
	\item \textbf{Large-gradient regime:} When the gradient norm is large, the fractional-power part $L_{2/3}\|\nabla f\|^{2/3}$ dominates. In this regime, the convergence rate accelerates to $O(T^{-3/4})$, which corresponds to the improved complexity guarantee $T = O(\varepsilon^{-4/3})$.
\end{itemize}

For the unconstrained methods, the proof is based on a two-stage telescoping
split over small-gradient and large-gradient iterations. This yields additive
no-max complexity bounds and avoids relying on monotonicity of the gradient norm.

This dichotomy explains both the theoretical bounds and the observed practical behaviour of the proposed methods, demonstrating their robustness and adaptability across different stages of the optimization process. Furthermore, unlike Bregman-based approaches that require global a priori knowledge to fix the geometry of the space, our Euclidean framework allows for on-the-fly adaptation of the smoothness constants, making it highly efficient for ill-conditioned or ravine-like landscapes.

Finally, we would like to disclose the specific contributions and the tools utilized in the preparation of this manuscript. The core conceptual idea of establishing the $(2/3, L_0, L_{2/3})$-generalized smoothness for the Burer--Monteiro factorized objective was originally proposed by the authors. The technical derivations and formal mathematical proofs were constructed with the assistance of the artificial intelligence models Qwen 3.8 Max and Deepseek v4 pro. All computational experiments, including the design of the numerical protocols and the analysis of the results, were conducted manually by the authors.

	\appendix
	
	\section*{Appendix}
	
	\begin{algorithm}[H]
		\caption{Unconstrained SafeAdam-\(\ell\)}
		\label{alg:safeadam_unconstrained}
		\Input{\(X^0\), \(\beta_1,\beta_2\in[0,1)\), \(\varepsilon_{\rm Adam}>0\), \(c_{\rm angle}\in(0,1]\), \(L_0,L_{2/3}\), \(T\)}
		\Output{\(X^T\)}
		$m_{-1}\gets0$, $v_{-1}\gets0$\;
		\For{$t=0,1,\dots,T-1$}{
			$g_t\gets\nabla f(X^t)$\;
			$a_t\gets\norm{g_t}_F$\;
			\If{$a_t=0$}{
				\Return $X^t$\;
			}
			$\gamma_t^{\rm PG}\gets q(a_t;a_t)/a_t$\;
			$X_t^{\rm PG}\gets X^t-\gamma_t^{\rm PG}g_t$\;
			$m_t\gets\beta_1m_{t-1}+(1-\beta_1)g_t$\;
			$v_t\gets\beta_2v_{t-1}+(1-\beta_2)g_t\circ g_t$\;
			$\hat m_t\gets m_t/(1-\beta_1^{t+1})$\;
			$\hat v_t\gets v_t/(1-\beta_2^{t+1})$\;
			$u_t\gets \hat m_t/(\sqrt{\hat v_t}+\varepsilon_{\rm Adam})$\;
			\If{$u_t\neq0$}{
				$d_t^{\rm Adam}\gets u_t/\norm{u_t}_F$\;
			}
			\Else{
				$d_t^{\rm Adam}\gets0$\;
			}
			\If{$\inprod{g_t}{d_t^{\rm Adam}}\ge c_{\rm angle}a_t$}{
				$b_t\gets\inprod{g_t}{d_t^{\rm Adam}}$\;
				$\eta_t^{\rm Adam}\gets q(b_t;a_t)$\;
				$X_t^{\rm Adam}\gets X^t-\eta_t^{\rm Adam}d_t^{\rm Adam}$\;
			}
			\Else{
				$X_t^{\rm Adam}\gets X_t^{\rm PG}$\;
			}
			\If{$f(X_t^{\rm Adam})\le f(X_t^{\rm PG})$}{
				$X^{t+1}\gets X_t^{\rm Adam}$\;
			}
			\Else{
				$X^{t+1}\gets X_t^{\rm PG}$\;
			}
		}
	\end{algorithm}

	\begin{algorithm}[H]
		\caption{Certified projected SafeAdam-\(\ell\)}
		\label{alg:safeadam_projected}
		\Input{\(X^0\ge0\), \(\beta_1,\beta_2\in[0,1)\), 
			\(\varepsilon_{\rm Adam}>0\), \(c_{\rm angle}\in(0,1]\), 
			\(L_0,L_{2/3}\), \(T\)}
		\Output{\(X^T\)}
		$m_{-1}\gets0$, $v_{-1}\gets0$\;
		\For{$t=0,1,\dots,T-1$}{
			$g_t\gets\nabla f(X^t)$\;
			$a_t\gets\norm{g_t}_F$\;
			\If{$a_t=0$}{
				\Return $X^t$\;
			}
			$\gamma_t^{\rm PG}\gets q(a_t;\,a_t)/a_t$\;
			$X_t^{\rm PG}\gets\Proj\bigl(X^t-\gamma_t^{\rm PG}g_t\bigr)$\;
			$G_t\gets \bigl(X^t-X_t^{\rm PG}\bigr)/\gamma_t^{\rm PG}$\;
			$m_t\gets\beta_1m_{t-1}+(1-\beta_1)g_t$\;
			$v_t\gets\beta_2v_{t-1}+(1-\beta_2)\,g_t\circ g_t$\;
			$\hat m_t\gets m_t/(1-\beta_1^{t+1})$\;
			$\hat v_t\gets v_t/(1-\beta_2^{t+1})$\;
			$u_t\gets \hat m_t/\bigl(\sqrt{\hat v_t}+\varepsilon_{\rm Adam}\bigr)$\;
			\If{$u_t\neq0$}{
				$d_t^{\rm Adam}\gets u_t/\norm{u_t}_F$\;
			}
			\Else{
				$d_t^{\rm Adam}\gets0$\;
			}
			\If{$\inprod{g_t}{d_t^{\rm Adam}}\ge c_{\rm angle}\,a_t$}{
				$b_t\gets\inprod{g_t}{d_t^{\rm Adam}}$\;
				$\eta_t^{\rm Adam}\gets q(b_t;\,a_t)$\;
				$X_t^{\rm Adam}\gets\Proj\bigl(X^t-\eta_t^{\rm Adam}d_t^{\rm Adam}\bigr)$\;
			}
			\Else{
				$X_t^{\rm Adam}\gets X_t^{\rm PG}$\;
			}
			\If{$f(X_t^{\rm Adam})\le f(X_t^{\rm PG})$}{
				$X^{t+1}\gets X_t^{\rm Adam}$\;
			}
			\Else{
				$X^{t+1}\gets X_t^{\rm PG}$\;
			}
		}
	\end{algorithm}
	
	\section{Problems Requiring Grid Search for Kernel Parameters}
	\label{sec:grid_search}
	
	In the previous sections we derived sufficient conditions on the kernel parameters that guarantee relative smoothness.
	For the norm kernel
	\[
	h_N(X) = \frac{\alpha}{4}\|X\|^4 + \frac{\sigma}{2}\|X\|^2
	\]
	the conditions are
	\[
	\alpha \ge 6L_F, \qquad \sigma \ge 2\|\nabla F(0)\|.
	\]
	For the Gram kernel
	\[
	h_G(X) = \frac{\alpha}{4}\|X\|^4 + \frac{\beta}{4}\|X^\top X\|^2 + \frac{\sigma}{2}\|X\|^2
	\]
	the conditions are
	\[
	\alpha \ge 2(L_F - \mu_F), \qquad \beta \ge 2L_F, \qquad \sigma \ge 2\|\nabla F(0)\|.
	\]
	These conditions are valid and can be used directly when the global constants
	\(L_F\), \(\mu_F\), \(\|\nabla F(0)\|\), and \(\|\mathcal A\|_{\mathbb S,\mathrm{op}}\)
	are known exactly.
	However, in many practical low‑rank optimization problems these constants are either unknown, computationally expensive to estimate, or the sufficient conditions are too conservative, leading to prohibitively small step sizes.
	In such cases the kernel parameters must be tuned by grid search.
	Below we list the problem classes where this is unavoidable.
	
	\subsection{Matrix Sensing with Unknown Operator}
	\paragraph{Formulation.}
	\begin{equation}
		\min_{Y \in \mathbb S^{n\times n}} F(Y) = \frac{1}{2}\|\mathcal A(Y) - b\|_F^2,
	\end{equation}
	where \(\mathcal A: \mathbb S^{n\times n} \to \mathbb R^m\) is a linear operator and \(b \in \mathbb R^m\).
	The Burer--Monteiro factorization writes \(Y = XX^\top\) with \(X \in \mathbb R^{n\times r}\).
	
	\paragraph{Why grid search is mandatory.}
	The smoothness and strong convexity constants are
	\(L_F = \|\mathcal A\|_{\mathrm{op}}^2\) and \(\mu_F = \sigma_{\min}^2(\mathcal A)\).
	If \(\mathcal A\) is given only implicitly (e.g., as a black‑box linear map) or its spectrum is not computed, these constants are unavailable.
	Even if computed, \(\sigma_{\min}(\mathcal A)\) may be zero or extremely small, making \(\mu_F \approx 0\) and rendering the sufficient conditions inapplicable.
	Hence, one must resort to grid search over \(\alpha, \sigma\) (or \(\alpha, \beta, \sigma\) for the Gram kernel).
	
	\subsection{Matrix Completion with Restricted Strong Convexity}
	\paragraph{Formulation.}
	\begin{equation}
		\min_{Y \in \mathbb S^{n\times n}} F(Y) = \frac{1}{2}\|\mathcal P_\Omega(Y - M)\|_F^2,
	\end{equation}
	where \(\Omega \subset [n]\times[n]\) is the set of observed entries and \(\mathcal P_\Omega\) is the orthogonal projection onto \(\Omega\).
	The factorized problem is \(\min_X f(X) = F(XX^\top)\).
	
	\paragraph{Why grid search is mandatory.}
	The function \(F\) is convex but \emph{not} strongly convex on the whole space.
	The strong convexity parameter \(\mu > 0\) exists only on the subspace of low‑rank matrices, and the restricted constant \(\mu_{\text{rest}}\) depends on \(\Omega\) and the rank \(r\).
	Since the generalized smoothness constants \(L_0\) and \(L_{2/3}\) in Theorem~\ref{thm:lower-bound} are derived using a global \(\mu > 0\), they cannot be applied directly.
	Consequently, the kernel parameters must be tuned by grid search.
	
	\subsection{Phase Retrieval}
	\paragraph{Formulation.}
	\begin{equation}
		\min_{Y \in \mathbb S^{n\times n}} F(Y) = \frac{1}{2}\sum_{i=1}^m \left(\langle a_i a_i^\top, Y\rangle - b_i\right)^2,
	\end{equation}
	with \(Y = XX^\top\), \(X \in \mathbb R^{n\times r}\).
	
	\paragraph{Why grid search is mandatory.}
	The linear operator is \(\mathcal A = \sum_{i=1}^m (a_i a_i^\top)(a_i a_i^\top)^\top\).
	Its condition number can be very large, and the constants \(L_F = \|\mathcal A\|_{\mathrm{op}}\) and \(\mu_F = \sigma_{\min}(\mathcal A)\) are either unknown or produce extremely conservative sufficient conditions.
	Grid search over the kernel parameters is therefore necessary.
	
	\subsection{Euclidean Distance Matrix Completion with Irregular Graphs}
	\paragraph{Formulation.}
	\begin{equation}
		\min_{X \in \mathbb R^{n\times r}} f(X) = \frac{1}{2}\sum_{(i,j)\in \Omega} \left(\|X_i - X_j\|^2 - d_{ij}\right)^2,
	\end{equation}
	where \(\Omega \subset [n]\times[n]\) is the set of known pairwise distances.
	
	\paragraph{Why grid search is mandatory.}
	The Lipschitz constant for this problem is
	\[
	L_{\text{EDM}} = 9 \max_{i=1,\dots,n} |\{j : (i,j) \in \Omega\}|.
	\]
	For irregular observation graphs, this maximum degree can be very large, making the sufficient condition \(\alpha \ge 6L_{\text{EDM}}\) extremely conservative and the resulting step size tiny.
	In practice, one must tune \(\alpha\) (and \(\sigma\), and for the Gram kernel also \(\beta\)) by grid search to obtain reasonable performance.
	
	\subsection{Ill‑Conditioned Matrix Factorization}
	\paragraph{Formulation.}
	\begin{equation}
		\min_{Y \in \mathbb S^{n\times n}} F(Y) = \frac{1}{2}\langle \mathcal A Y, Y\rangle + \langle B, Y\rangle,
	\end{equation}
	with \(Y = XX^\top\), where \(\mathcal A\) is positive definite on symmetric matrices with condition number \(\kappa_F = L_F / \mu_F \gg 1\).
	
	\paragraph{Why grid search is mandatory.}
	The sufficient conditions for the Gram kernel depend on \(L_F\) and \(\mu_F\).
	When \(\kappa_F\) is large, the gap between the analytical parameters and the optimal ones is significant.
	The quality of the Gram kernel approximation is governed by \(1 - \mu_F/L_F\), so for ill‑conditioned problems the kernel parameters must be tuned by grid search.
	
	\subsection{Large Rank Factorizations}
	\paragraph{Formulation.}
	Consider the general quadratic
	\[
	F(Y) = \frac{1}{2}\langle \mathcal A Y, Y\rangle + \langle B, Y\rangle
	\]
	with \(Y = XX^\top\) and \(X \in \mathbb R^{n\times r}\), where the rank \(r\) is large (e.g., \(r \ge 20\)).
	
	\paragraph{Why grid search is mandatory.}
	The derived constants in Theorem~\ref{thm:lower-bound} are
	\begin{equation}
		L_{2/3} = 6\|\mathcal A\|_{\mathbb S,\mathrm{op}}\left(\frac{r}{\mu}\right)^{2/3}, \qquad
		L_0 = 6\|\mathcal A\|_{\mathbb S,\mathrm{op}}\frac{2r\|B\|_F}{\mu} + 2\|B\|_F.
	\end{equation}
	These bounds grow with \(r\).
	For large \(r\), they become extremely conservative, leading to very small step sizes.
	Grid search over the kernel parameters (or over \(L_0, L_{2/3}\)) is necessary to achieve practical convergence.
	
	\subsection{Nonzero Linear Term with Unknown Norm}
	\paragraph{Formulation.}
	\begin{equation}
		F(Y) = \frac{1}{2}\langle \mathcal A Y, Y\rangle + \langle B, Y\rangle,
	\end{equation}
	with \(B \neq 0\) and either \(\|B\|_F\) unknown or very large.
	
	\paragraph{Why grid search is mandatory.}
	The norm \(\|B\|_F\) enters both \(L_0\) and the kernel parameter \(\sigma \ge 2\|\nabla F(0)\| = 2\|B\|_F\).
	If \(B\) is not explicitly given or its norm is large, the analytical \(\sigma\) produces a tiny step size.
	In practice, one must tune \(\sigma\) (and possibly \(\alpha\)) by grid search.
	
	\subsection{Symmetric Nonnegative Matrix Factorization with Arbitrary \(M\)}
	\paragraph{Formulation.}
	\begin{equation}
		\min_{X \ge 0} f(X) = \frac{1}{2}\|M - XX^\top\|_F^2,
	\end{equation}
	where \(M \in \mathbb S^{n\times n}\) is a symmetric nonnegative matrix and \(X \in \mathbb R^{n\times r}_{\ge 0}\).
	
	\paragraph{Why grid search is mandatory.}
	For this specific problem, \(L_F = 1\), \(\mu_F = 1\), and \(\|\nabla F(0)\| = \|M\|_F\).
	Thus the analytical parameters are known: \(\alpha = 6\), \(\sigma = 2\|M\|_F\).
	However, if \(M\) is not normalized and has a large scale, \(\sigma\) becomes huge, resulting in slow convergence.
	Moreover, the active nonnegativity constraints alter the effective geometry, making the Euclidean‑based sufficient conditions loose.
	In practice, one reduces \(\sigma\) empirically via grid search to accelerate convergence, as reported in our numerical experiments.
	
	\subsection{Compressed Sensing with Restricted Strong Convexity}
	This is a special case of Section~\ref{sec:grid_search} (Matrix Completion) and is mentioned here for completeness.
	The same reasoning applies: without a global strong convexity parameter, the analytical constants are unavailable, and grid search is required.
	
	\paragraph{Remark.}
	In all the above cases, the necessity of grid search stems from either (i) missing or expensive global constants, or (ii) overly conservative sufficient conditions due to problem structure (large \(r\), ill‑conditioning, restricted strong convexity, active constraints).
	When the constants are known and the problem is well‑conditioned with small \(r\), the analytical parameter choices from Propositions~1 and~2 are sufficient and no tuning is required.
\end{document}